\documentclass[12pt]{amsart}

\usepackage{verbatim}
\usepackage{framed}
\usepackage{amsfonts}
\usepackage{amsthm,amsmath}
\usepackage[integrals]{wasysym}
\usepackage{enumerate}
\usepackage{fancyhdr}
\usepackage{amssymb}
\usepackage{chemarrow}
 
\usepackage{mathtools}
\usepackage{hyperref}

\usepackage{appendix}

\usepackage{enumitem}

\usepackage{epsfig,epic,eepic,graphicx}

\usepackage[usenames,dvipsnames,svgnames]{xcolor}
\usepackage{tikz-cd}
\usepackage{mathtools}

\usetikzlibrary{arrows,arrows.meta}

\numberwithin{equation}{section}
 \setlength{\oddsidemargin}{0mm}
\setlength{\evensidemargin}{0mm} \setlength{\topmargin}{-15mm}
\setlength{\textheight}{220mm} \setlength{\textwidth}{155mm}
%%%%%%%%%%%%%%%%%%%%%%%%%%%%%%%%%%
%%%%%%%%%%%%%%%%%%%%%%%%%%%%%%%%%%

%ABREVIATIONS%

\let\veps=\varepsilon
%\let\wt=\widetilde

%LETTRES RONDES

\def\R{{\mathbb R}}
\def\T{{\mathbb T}}

\def\and{\quad{\rm and}\quad}

\def\virgp{\raise 2pt\hbox{,}}
\def\cdotpv{\raise 2pt\hbox{;}}

\def\div{ \hbox{\rm div}\,  }

\numberwithin{equation}{section}
\newtheorem{thm}{Theorem}[section]
\newtheorem{lem}[thm]{Lemma}

\newtheorem{prop}[thm]{Proposition}
\newtheorem{cor}[thm]{Corollary}
\newtheorem{rmk}[thm]{Remark}

\newcommand{\ben}{\begin{eqnarray}}
\newcommand{\een}{\end{eqnarray}}

\newcommand{\beq}{\begin{eqnarray}}
\newcommand{\eeq}{\end{eqnarray}}

\newcommand{\beno}{\begin{eqnarray*}}
\newcommand{\eeno}{\end{eqnarray*}}

%%%%%%%%%%%-------------------------------------------
\mathsurround 1pt

\definecolor{darkgreen}{rgb}{0,0.5,0}
\definecolor{darkblue}{rgb}{0,0,0.7}
\definecolor{darkred}{rgb}{0.9,0.1,0.1}
\definecolor{lightblue}{rgb}{0,0.51,1}

\begin{document}
\title[]{Stable self-similar singularity formation for infinite energy solutions of the incompressible porous medium equations}

\author[C. Collot]{Charles Collot}
\address[C. Collot]{\newline  Laboratoire de Math\'{e}matiques AGM, UMR CNRS 8088, Cergy Paris Universit\'{e}, 2 Avenue Adolphe Chauvin, 95302     Cergy-Pontoise Cedex, France}
\email{charles.collot@cyu.fr}

\author[C. Prange]{Christophe Prange}
\address[C. Prange]{\newline  Laboratoire de Math\'{e}matiques AGM, UMR CNRS 8088, Cergy Paris Universit\'{e}, 2 Avenue Adolphe Chauvin, 95302     Cergy-Pontoise Cedex, France}
\email{christophe.prange@cyu.fr}

 \author[J. Tan]{Jin Tan}
\address[J. Tan]{\newline Department of Mathematics,The Chinese University of Hong Kong, Shatin, Hong Kong, P.R. China}
\email{jtan@math.cuhk.edu.hk}

\date{\today}
 \subjclass[2010]{ }
 \keywords{Incompressible porous medium equations,  stability,  infinite energy,   blow-up,  steady state}

\begin{abstract}

We consider a special class of infinite energy solutions to the inviscid  incompressible porous medium equations (IPM), introduced in Castro-C\'{o}rdoba-Gancedo-Orive \cite{CCGO}. The (IPM) equations then reduce to a one-dimensional nonlocal nonlinear equation, for which an explicit self-similar blow-up solution is found in \cite{CCGO}. We show the stability of this explicit blow-up solution by smooth enough perturbations, and identity a sharp regularity threshold below which it is unstable. The heart of our proof is a change of variables that transforms the study of finite time blow-up solutions, to the study of global-in-time solutions to the Proudman-Johnson equation, which is a reduced equation that appears for special classes of solutions of the two-dimensional Euler equations, and of the inviscid primitive equations (or hydrostatic Euler equations). Our main result is in fact the asymptotic stability with decay estimates for a family of steady states of this reduced equation. It thus also implies the corresponding stability of steady states for the associated classes of solutions to the Euler equations and inviscid primitive equations.
 
% It turns out  that the  stability  of aforementioned  blow-up solutions is equivalent to the stability  of   steady states for a  1D nonlocal model reduced  from the  incompressible primitive equations,  which  has been considered in Cao-Ibrahim-Nakanishi-Titi \cite{CINT}.     Compared with Jia-Stewart-Sverak \cite{JSS}, 

%To prove the stability result,   we first focus on a   linear toy problem abstracted from that later model,  and achieve weighted linear damping for suitable admissible weight.   The additional  difficulties come from the non-local terms in the perturbation equation and the full linearized operator.  To handle these,  we observe fine properties of the linearized operator,  and make  suitable decomposition of the problem, which enable us to modulate the most complicated non-local terms in the perturbation equation and the linearized operator.   
 \end{abstract}
\maketitle

\section{Introduction and main results}

\subsection{The incompressible porous medium equations (IPM)}

In this paper, we consider the 2D inviscid  incompressible porous medium (IPM) equation. The equation describes evolution of density carried by the flow of incompressible fluid that is determined via Darcy's law in the field of gravity:
\begin{equation}\label{IPM}
\partial_\tau \rho+ u\cdot\nabla \rho=0,\quad \div u=0,\quad  u+\nabla P=(0, \rho).
\end{equation}
Here $\tau\geq0$ is the time variable,  $\rho$ is the transported density and $u$ is the vector field describing the fluid motion, and $P$ is the pressure.  For more on flows in porous media, we refer to \cite{Be,N,FL}. Throughout this paper, we consider the spatial domain to be the infinite strip $(x, y)\in[-\pi, \pi]\times \R,$ or $\T\times \R$ that is periodic for $x\in [-\pi, \pi].$  In the first case,  due to the presence of boundaries,  $u$ also satisfies the boundary condition $u\cdot n=0$ for $x=\pm\pi.$

Whether smooth and localized solutions to \eqref{IPM} can blow up in finite time remains a challenging open problem, while numerical simulations indicate absence of singularities \cite{CGO}. In Castro-C\'{o}rdoba-Gancedo-Orive  \cite{CCGO}  a class of infinite energy solutions has been  considered on $\T\times \R,$  for which  the stream function $\psi$ of the velocity field has the form $\psi(\tau, x, y)=y f(\tau, x).$   By the fact that $\Delta \psi=  \partial_{x}\rho$,    the density necessarily has the expression $\rho(\tau, x, y)=yf(\tau, y)+ g(\tau, y).$  Substituting this expression in   \eqref{IPM},  one gets
\begin{equation*}
y (\partial_\tau \partial_{x} f+ f\partial_{x}^2 f-(\partial_{x} f)^2)=y\partial_{y} g \partial_{x}f - \partial_\tau g,
\end{equation*}
which indicates that  $g$ has to be $g(\tau, y)=y G(\tau)$
with $G(\tau)=\frac{1}{\pi}  \int_0^\tau \|\partial_{x} f(\bar\tau, \cdot)\|_{L^2(\mathbb{T})}^2\,d\bar\tau.$
 Looking for $f(\tau, x)=\int_{-\pi}^x b(\tau,  \bar x)\,d\bar x,$ we have that $b(\tau, x)$  satisfies the following  1D nonlocal nonlinear  transport equation:
\begin{equation}\label{eq-b}
\partial_\tau b+  \left(\int_{-\pi}^x b(\tau, \bar x) \,d\bar x\right) \partial_x b-b^2 +\frac{1}{\pi}  \int_{-\pi}^{\pi}b^2  \,dx=\frac{1}{\pi}\left( \int_{0}^\tau\int_{-\pi}^{\pi}b^2\,dx\,d\bar\tau \right) b,  
\end{equation}
supplemented with mean-free boundary condition 
\begin{equation}\label{mean-free-b}
\int_{-\pi}^\pi b(\tau, x)\,dx=0.
\end{equation}
Note that the solution constructed in this way will also give  a solution of  \eqref{IPM} on the domain  $[-\pi, \pi]\times \R$ with prescribed boundary condition. 
\medskip

 It is found in \cite{CCGO} that Equation \eqref{eq-b} has a class of explicit blow-up solutions of the form
\begin{equation} \label{id:explicit-blow-up}
b(\tau,x)=\frac{\mu}{\cos (\mu \tau)} \cos(x)
\end{equation}
for $\mu \neq 0$. The above solution blows up at time $\tau^*=\frac{\pi}{2\mu}$ with the following self-similar behavior to leading order
\begin{equation} \label{id:explicit-blow-up-2}
b(\tau,x)=\frac{1}{\tau^*-\tau} \cos(x)+O(\tau^*-\tau)
\end{equation}
as $\tau \to \tau^*$. Our motivation  of the present paper is the stability issue of this family of  blow-up solutions.

Let us also mention that very recently,    Kiselev and Sarsam proposed a 1D nonlocal equation in \cite{KS} to model the behavior of a boundary layer solution to the (IPM) equation posed on the 2D periodic half-plane,   moreover they established local well-posedness for the equation as well as finite time blow-up for a class of initial data.

\subsection{Connexion with other fluid models through the Proudman-Johnson equation} \label{other-fluids}

Given $b$ a solution to \eqref{eq-b} satisfying \eqref{mean-free-b}, we perform the following change of variables:
\begin{equation}\label{def-a}
a(t,x)=\frac{1}{\nu(\tau)}b(\tau,x), \qquad t=\int_0^\tau \nu(\bar \tau)d\bar \tau, \qquad \frac{d}{d\tau}\nu =\frac{\nu}{\pi}\int_0^{\tau} \int_{-\pi}^{\pi}b^2(\bar \tau,  x)\,dx\,d\bar\tau 
\end{equation}
with $\nu(0)=\nu_0>0$. Then the new function $a$ solves
\begin{equation}\label{eq-main}
\left\{ \begin{array}{l l }
\partial_t a+  \left(\int_{-\pi}^x a(t, \bar x) \,d\bar x\right) \partial_x a -a^2 +\frac{1}{\pi}  \int_{-\pi}^{\pi}a^
2\,dx=0, \\
a(0, x)=a_0(x), 
\end{array} \right.
\end{equation}
on $[0,T)\times [-\pi,\pi]$, supplemented by the mean-free condition
\begin{equation} \label{bc}
\int_{-\pi}^\pi a(t,x)dx=0.
\end{equation}
We notice that condition \eqref{bc} is preserved over time once it is satisfied initially. Equation \eqref{eq-main} has the following scaling symmetry. Given a solution $a$, then for any $\lambda>0$, the following scaling also gives a solution:
\begin{equation} \label{id:scaling-symmetry}
(t,x)\mapsto \frac{1}{\lambda}a\left(\frac{t}{\lambda},x \right).
\end{equation}

Equation \eqref{eq-main} is the so-called Proudman-Johnson equation \cite{PJ} that appears as a reduced model for special classes of solutions  both for the $2D$ incompressible primitive equations (also known as the hydrostatic Euler equations) and the $2D$ Euler equations. Local-well posedness and finite time blow-up (which is possible depending on the initial data) for the Proudman-Johnson equation was studied in \cite{CISY,CW,O,OZ} and references therein. In \cite{S,SS,SS2} it is found that smooth periodic initial data lead to global solutions, while the absence of periodicity or the lack of smoothness may lead to finite time blow-up. The authors develop an elegant Lagrangian approach based on the characteristics. Our goal here is to develop an Eulerian and quantitative approach to stability, in the continuation of \cite{CGIM,CIL} in order to be able to treat the $2D$ models later on.

Following \cite{PJ} one can derive \eqref{eq-main} as a reduced equation from a special class of solutions of the $2D$ Euler equations
$$
\left\{ \begin{array}{l l}
    U_t + (U\cdot \nabla )U=-\nabla P ,  \\
    \nabla \cdot U =0.
\end{array} \right. 
$$
One considers a velocity field $U(x,y)=(\partial_y \psi,-\partial_x \psi)$ on the strip $(x,y)\in [-\pi,\pi]\times \mathbb R$ with no-penetration boundary condition and with a stream function of the form $\psi(t,x,y)=ya(t,x)$. Then the function $a$ satisfies \eqref{eq-main}. This equation can also be obtained for a special class of axisymmetric solutions without swirl, see \cite{EW} for example.   We also mention that a similar construction of infinite energy solutions that blow up in finite time for the $3D$ Euler equations is established in \cite{PC}.

Equation \eqref{eq-main} can also be derived as a reduced equation for a special class of solutions of the $2D$ inviscid primitive  equations (IPE)
$$
\left\{ \begin{array}{l l}
    u_t + u\, u_X + w u_Z +p_X  = 0 ,  \\
    p_Z =0 , \\
    u_X + w_Z =0,  
\end{array} \right.
$$
set on $\big\{ -1\leq X \leq 1, \ -\pi \leq Z\leq \pi \big\}$, with boundary conditions $w(t,X,-\pi) = w(t,X,\pi) = 0$ and $\int_{-\pi}^\pi u(t,-1,Z) dZ = \int_{-\pi}^\pi u(t,1,Z) dZ = 0$, what was firstly done in \cite{CINT}. For $u(X)=-u(-X)$ an odd in $X$ solution, let $ a(t,Z)=-\partial_X u(t,0,Z)$ denote the trace of their horizontal derivative on the central line. Then $a(t,x)$ solves \eqref{eq-main}. In particular, for the particular class of solutions of the form $u(t,X,Z)=-Xa(t,Z)$, the (IPE) evolution is equivalent to the reduced equation \eqref{eq-main}. We mention that the hydrostatic Euler equations can be derived as an asymptotic model from the Euler equations \cite{Br,G}.

\medskip

Interestingly, the blow-up solutions \eqref{id:explicit-blow-up} for the incompressible porous medium equations \eqref{eq-b} corresponds to $  \tau=\frac{2}{\mu}\arctan\left(1-\frac{2}{e^{\mu t}+1}\right)$ and $\nu= \frac{1}{\cos(\mu \tau)}$ and
   \begin{equation}\label{formula-t}
a(t,x)= \mu \cos x.
  \end{equation}
Thus, the family of finite-time blow-up solutions \eqref{id:explicit-blow-up} to \eqref{eq-b} corresponds to the family of stationary solutions \eqref{formula-t} to \eqref{eq-main}. It turns out  that  when $\mu>0,$ the  stability  of these blow-up solutions of \eqref{eq-b} is equivalent to  the  global-in-time stability  of the  steady states $\{\mu\cos(x)\}_{\mu\in\R\backslash\{0\}}$  for  Equation \eqref{eq-main},  via the change of variables \eqref{def-a}.    As a consequence of this observation, the strategy of the present article is to consider the stability issue of the stationary states \eqref{formula-t} for \eqref{eq-main}. 

Equation \eqref{eq-main} admits more generally as steady states the functions $\mu \cos(kx)$ and $\mu \sin(\frac{2k+1}{2}x)$ for $k\in \mathbb N$ and $\mu\in \mathbb R$. We show in Proposition \ref{Le-SS} that these are the only stationary states. We believe that stationary states other then $\mu \cos (x)$ for $\mu>0$ are unstable.

\medskip

The precise understanding of the large-time dynamics or blow-up dynamics for the inviscid primitive equations remains largely open. Blow-up was first proved to occur in \cite{CINT,Wo}. In \cite{CINT},  a family of backward self-similar solutions has been constructed to Equation \eqref{eq-main}.  However,  its stability is not known given the finite smoothness of these profiles.   On the other hand,    stable blow-up solutions around smooth and nonsmooth nearly self-similar blowup profiles have been constructed in  \cite{CIL}. Steady states are studied in \cite{LWX}.

The hydrostatic Euler equations have another instability apart from blow-up: they may be ill-posed in Sobolev spaces \cite{R,HN}. The equation is thus only well-posed in the general case for analytic data \cite{KTVZ,KTVZ2}, or for solutions with a monotonicity condition \cite{MW,KMVW}. Non-uniqueness of weak solutions was studied recently \cite{CM}, and in connexion with the Onsager conjecture \cite{BMT,BMT2}. For more result on this equation and other models of geophysical flows we refer to the survey \cite{LT}.

\medskip

Finally, let us mention that Equation \eqref{eq-main} shares similarities with other $1D$ reduced models. First, for the viscous Prandtl equation on the axis \cite{EE}: 
\begin{equation}
\partial_t v -\partial_y^2 v +\left( \int_0^y v(t,  \bar y)\,d\bar y\right)\,\partial_y v -v^2=0,\quad y>0
\end{equation}
with  boundary condition  $v(t,   0)=0.$  
In \cite{CGIM},   a blow-up dynamics is found,  for which the viscosity is negligible,  where the singularity forms on a large spatial scale $y\sim (T-t)^{-\frac{1}{2}},$  see also  \cite{CGM, CGM2, KVW}. Second, for the De Gregorio equation
\begin{equation}\label{DGE}
\partial_t\omega +u\partial_\theta \omega=\omega\partial_\theta u,\quad \partial_\theta u=\mathcal{H}\omega,\quad \int_{\mathbb{S}^1} u\,d\theta=0,
\end{equation}
where $\mathcal{H}$ is the Hilbert transform and $\mathbb{S}^1$ is the unit circle, Jia-Stewart-Sverak \cite{JSS} established stability of steady state $\sin(\theta)$ for \eqref{DGE}.   Their proof is based on a careful spectrum analysis of the linearized operator $L(\omega)=\sin(\theta)\partial_\theta(\omega+ u)-\cos(\theta)(\omega+u)$ (which has fine commutator structure)  and by energy estimates.

\subsection{Stable self-similar blow-up for infinite energy solutions of (IPM)}

Our first main result is the stability of the family of blow-up solutions \eqref{id:explicit-blow-up} for $\mu>0$. Small and regular enough initial perturbations will give solutions that still blow up in finite time, and that have the same self-similar behavior \eqref{id:explicit-blow-up-2} near the blow-up time. 
  
\begin{thm}\label{Th-IPM}

There exists $\delta>0$ such that for any $\mu>0$ the following hold. Let $b_0\in  C^{3}(\Omega)$ satisfy the mean free condition $\int_{-\pi}^\pi b_0=0$ and such that
\begin{equation}\label{Th-ic-IPM}
\|b_0(\cdot) -\mu \cos ( \cdot)\|_{C^3([-\pi,\pi])} \leq \delta \mu,
\end{equation}
then the unique $C^3$ solution $b(t, x)$ of the equation \eqref{eq-main} with $b(0, x)=b_0(x)$ blows up in finite time $\tau^*, $ for some $\tau^*>0$.   Moreover, it can be decomposed as
\begin{equation}\label{b-behavior}
b(\tau,x)=\frac{1}{\tau^*-\tau}\cos(x)+\tilde b(\tau,x),
\end{equation}
where
\begin{equation}\label{b-behavior-remainder}
\| \tilde b(\tau)\|_{C^1([-\pi,\pi])}\lesssim (\tau^*-\tau)^{-3/4}.
\end{equation}

\end{thm}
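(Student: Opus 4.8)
My plan is to deduce Theorem~\ref{Th-IPM} from the global-in-time asymptotic stability of the steady state $\mu\cos x$ for the Proudman--Johnson equation~\eqref{eq-main} --- the main theorem of the paper --- by carefully inverting the change of variables~\eqref{def-a}. Concretely, I would normalize $\nu_0=1$, so that the data $a_0=b_0$ is, by~\eqref{Th-ic-IPM}, within $\delta\mu$ of $\mu\cos x$ in $C^3$. Applying the asymptotic-stability theorem to~\eqref{eq-main} then produces a global solution $a(t)$ and a limiting amplitude $\mu_\infty$ (close to $\mu$) with
\begin{equation*}
a(t,x)=\mu_\infty\cos x+r(t,x),\qquad \|r(t)\|_{C^1([-\pi,\pi])}\lesssim \mu\,\delta\, e^{-\sigma t},
\end{equation*}
where $\sigma$ is the decay rate furnished by that theorem. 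The identification of $\mu_\infty$ uses the neutral scaling mode~\eqref{id:scaling-symmetry}, and the fact that $\mu\cos x$ is the only relevant steady state is Proposition~\ref{Le-SS}.

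Next I would analyze the auxiliary functions of~\eqref{def-a} in the rescaled time $t$. Writing $\Phi(t)=\int_{-\pi}^\pi a^2(t,x)\,dx$ and using $\tfrac{dt}{d\tau}=\nu$, the last two relations of~\eqref{def-a} become the closed linear ODE
\begin{equation*}
\frac{d^2\nu}{dt^2}=\frac{\Phi(t)}{\pi}\,\nu,\qquad \nu(0)=1,\quad \frac{d\nu}{dt}(0)=0 .
\end{equation*}
Since $\Phi(t)\to\pi\mu_\infty^2$ exponentially (because $a(t)\to\mu_\infty\cos x$) and $\nu$ is positive, increasing and convex, a comparison/Gr\"onwall argument gives $\nu(t)=Ce^{\mu_\infty t}\bigl(1+O(e^{-\sigma t})\bigr)$ for some $C>0$. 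The inverse time change $\tau^*-\tau(t)=\int_t^\infty \nu(t')^{-1}\,dt'$ is then a convergent integral, so $\tau^*<\infty$ and
\begin{equation*}
\tau^*-\tau(t)=\frac{1}{C\mu_\infty}e^{-\mu_\infty t}\bigl(1+O(e^{-\sigma t})\bigr),\qquad \nu=\frac{1}{\mu_\infty(\tau^*-\tau)}\bigl(1+O(e^{-\sigma t})\bigr),
\end{equation*}
so that $\tau\to\tau^*$ corresponds to $t\to\infty$, consistently with the explicit correspondence recorded around~\eqref{formula-t}.

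Finally I would reconstruct $b=\nu a$ and extract the decomposition~\eqref{b-behavior}. Writing
\begin{equation*}
b(\tau,x)-\frac{\cos x}{\tau^*-\tau}=\Bigl(\nu\mu_\infty-\frac{1}{\tau^*-\tau}\Bigr)\cos x+\nu\, r,
\end{equation*}
both terms are controlled by the asymptotics above: the amplitude correction satisfies $|\nu\mu_\infty-\tfrac{1}{\tau^*-\tau}|\lesssim \tfrac{1}{\tau^*-\tau}e^{-\sigma t}$, while $\|\nu r\|_{C^1}\lesssim \tfrac{1}{\tau^*-\tau}\|r\|_{C^1}\lesssim \tfrac{1}{\tau^*-\tau}e^{-\sigma t}$. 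Converting back via $e^{-\sigma t}\sim(\tau^*-\tau)^{\sigma/\mu_\infty}$ turns both into $(\tau^*-\tau)^{\sigma/\mu_\infty-1}$, and the stated exponent $-3/4$ corresponds exactly to the $C^1$ decay rate $\sigma=\mu_\infty/4$, closing~\eqref{b-behavior-remainder}.

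The genuine heart of the argument is not this bookkeeping but the asymptotic-stability theorem it invokes: global existence for perturbations of $\mu\cos x$ together with quantitative $C^1$ decay at rate $\mu/4$. This rests on understanding the linearization $\mathcal L$ of~\eqref{eq-main} around $\mu\cos x$,
\begin{equation*}
\mathcal L w=-\mu\sin x\,\partial_x w+\mu\sin x\int_{-\pi}^x w(\bar x)\,d\bar x+2\mu\cos x\,w-\frac{2\mu}{\pi}\int_{-\pi}^\pi \cos(\bar x)\,w(\bar x)\,d\bar x ,
\end{equation*}
in particular its spectrum, the neutral scaling mode that must be removed by a modulation of the amplitude $\mu$, and the spectral gap fixing the rate; the loss of derivatives between the $C^3$ data and the $C^1$ decay would be handled by interpolation. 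Within the reduction itself, the only delicate points are the refined asymptotics of $\nu$ --- controlling $\int_t^\infty \nu^{-1}$ and the correction $\nu\mu_\infty-\tfrac{1}{\tau^*-\tau}$ to the same order as $r$ --- and checking that the decay exponent survives the nonlinear change of time variable, which is precisely where the value $3/4$ is produced.
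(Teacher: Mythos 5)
Your proposal follows essentially the same route as the paper: invert the change of variables \eqref{def-a}, apply Theorem \ref{Th-main}, derive the linear second-order ODE $\partial_t^2\nu=\frac{1}{\pi}\bigl(\int_{-\pi}^\pi a^2\bigr)\nu$ with exponentially converging coefficient, extract $\nu\sim \nu^* e^{\mu_* t}$, invert the time change to identify $\tau^*$, and reconstruct $b=\nu a$; the only cosmetic differences are your normalization $\nu_0=1$ versus the paper's $\nu_0=\mu$ (equivalent by the scaling symmetry \eqref{id:scaling-symmetry}) and your phrasing of the exponent, which in the paper arises as $\frac{1}{2\mu_*}-1\geq-\frac34$ from the rate $e^{-\mu t/2}$ of Theorem \ref{Th-main} rather than from a rate equal to $\mu_\infty/4$ (the available rate exceeds what you need, so the bound closes). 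The argument is correct.
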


We expect that for $\mu<0$, the solution \eqref{id:explicit-blow-up} is unstable, see Remark \ref{rk:ipe}. Since Equation \eqref{eq-b} does not have sign reversal symmetry, such solutions are actually very different from the solutions for $\mu>0$. The key difference is that for $\mu>0$ the function $\mu \cos(x)$ attains its maximum in the interior of $(-\pi,\pi)$ while for $\mu<0$ it is attained at the boundary $\{-\pi,\pi\}$. We make further remarks on Theorem \ref{Th-IPM} in Remark \ref{rk:ipe}.

The stability proved in Theorem \ref{Th-IPM} only holds for smooth enough perturbations. In the next theorem, we show instability for certain small perturbations in $C^{2-\epsilon}$ for any $0<\epsilon\leq 2$.

\begin{thm} \label{Th-main-4} 
For any $\epsilon,  \delta>0$ and $\mu>0$ there exists an initial data $b_0$ which satisfies the mean free condition $\int_{-\pi}^\pi b_0=0$ and
\begin{equation}\label{Th-ipm-4}
\|b_0 - {\mu}\cos ( x)\|_{C^{2-\epsilon}([-\pi,\pi])}\leq \delta \mu,
\end{equation}
such that the problem \eqref{eq-main} cannot have a solution blowing up at some time $\tau^*>0$ that can be decomposed as
\begin{equation}\label{TH4-behavior}
b(\tau,x)=\frac{1}{\tau^*-\tau}\cos(x)+\tilde b(\tau,x), \qquad 
\end{equation}
with
\begin{equation}\label{TH4-behavior-2}
(\tau^*-\tau)\| \tilde b(\tau)\|_{L^\infty([-\pi,\pi])} \to 0 \mbox{ as }\tau\to \tau^*.
\end{equation}
\end{thm}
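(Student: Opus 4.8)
The plan is to reduce the statement, via the change of variables \eqref{def-a}, to a \emph{non-relaxation} property for the Proudman--Johnson equation \eqref{eq-main}, and then to exhibit it by a Lagrangian amplification argument localized at the interior maximum $x=0$ of $\mu\cos x$, which is the repelling fixed point of the characteristic flow. Assume for contradiction that, for the data to be constructed, the solution $b$ of \eqref{eq-b} admits a decomposition \eqref{TH4-behavior} satisfying \eqref{TH4-behavior-2}. Then $(\tau^*-\tau)b(\tau,\cdot)\to\cos(\cdot)$ in $L^\infty$, and passing to $a$ through \eqref{def-a} yields two facts. First, the profile relaxes: writing $m(t):=a(t,0)>0$, the Eulerian ratios satisfy $a(t,x)/m(t)=b(\tau,x)/b(\tau,0)\to\cos x$ uniformly, so $a(t,\cdot)\approx m(t)\cos(\cdot)$ for large $t$. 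Second, since $b(\tau,0)\sim(\tau^*-\tau)^{-1}$, the quantity $\Lambda(t):=\int_0^t m(s)\,ds=\int_0^\tau b(\bar\tau,0)\,d\bar\tau$ diverges (logarithmically in $\tau^*-\tau$) as $t\to\infty$. It therefore suffices to construct $C^{2-\epsilon}$ data close to $\mu\cos x$ incompatible with these two facts.

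\emph{Construction.} I would take $a_0(x)=\mu\cos x+c\,\eta(x)$ with $\eta$ even, mean-free, supported near $0$, smooth away from $0$, and $\eta(x)\sim|x|^{2-\epsilon}$ as $x\to0$; the amplitude $c$ is chosen small so that $\|a_0-\mu\cos\|_{C^{2-\epsilon}}\le\delta\mu$, which in turn fixes $b_0$. Since $\eta\in C^{2-\epsilon}\setminus C^2$, this is a genuinely rougher-than-$C^2$ perturbation. Evenness is preserved by \eqref{eq-main}, so $x=0$ remains a characteristic with $a(t,0)=m(t)$ and $\int_{-\pi}^0 a=0$; hence near the center the stream function is $\int_{-\pi}^x a\approx m(t)x$ and the characteristics obey $\dot X\approx m(t)X$, i.e. $X(t)\approx x_0\,e^{\Lambda(t)}$ while $X$ stays small.

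\emph{Lagrangian amplification.} Probe the field at a fixed small $X>0$: at time $t$ the characteristic through $X$ is issued from $x_0(t)\approx X e^{-\Lambda(t)}$, which lies in the singular region of $\eta$. Along it the value $p(t):=a(t,X(t))$ and the center value $m(t)$ satisfy the exact identity
\[
\frac{d}{dt}\big(p-m\big)=p^2-m^2=(p+m)(p-m),
\]
so $d(t):=a(t,X)-m(t)=d(0)\exp\!\int_0^t(p+m)\,ds$. The initial gap is dominated by the singular part, $d(0)=a_0(x_0)-a_0(0)\sim c\,x_0^{2-\epsilon}$ (the smooth part is $O(x_0^2)$, negligible as $x_0\to0$). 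Since the probing characteristic stays in $[x_0,X]$ with $X$ small, relaxation gives $p\approx m\cos X(s)\ge m\cos X$, whence $p+m\ge m(1+\cos X)$ and $\int_0^t(p+m)\ge(2-X^2-o(1))\Lambda(t)$. Inserting $x_0=Xe^{-\Lambda}$ yields
\[
d(t)\gtrsim c\,X^{2-\epsilon}e^{(\epsilon-X^2-o(1))\Lambda(t)},
\]
so for $X$ fixed with $X^2<\epsilon/2$ the gap grows at least like $e^{(\epsilon/2)\Lambda}$.

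\emph{Contradiction and main obstacle.} The relaxation fact gives $\rho(t):=a(t,X)/m(t)\to\cos X$, so $d(t)/m(t)$ is bounded and $m(t)\gtrsim d(t)\gtrsim X^{2-\epsilon}e^{(\epsilon/2)\Lambda}$. But then $\Lambda'=m\ge k\,e^{(\epsilon/2)\Lambda}$, and integrating $e^{-(\epsilon/2)\Lambda}\Lambda'\ge k$ forces $t$ to remain bounded, contradicting $t\to\infty$ (equivalently $\Lambda\to\infty$). This rules out \eqref{TH4-behavior-2} for every choice of $\tau^*$. The heart of the matter, and the main obstacle, is the rigorous Lagrangian bookkeeping: one must control the backward flow $x_0(t)\approx Xe^{-\Lambda}$ and the comparison $p(s)\approx m(s)\cos X(s)$ uniformly in $s\in[0,t]$ and as $t\to\infty$, so that the accumulated error in the exponent is $o(\Lambda)$. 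This is exactly where the fixed margin $\epsilon>0$ is needed, and where the mechanism fails at $\epsilon=0$: for $C^2$ data one has $d(0)=O(x_0^2)$, hence $d(t)=O(X^2)$ bounded, consistently with the stability of Theorem \ref{Th-IPM}. The same computation explains why the regularity threshold sits exactly at the second derivative.
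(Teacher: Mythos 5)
Your mechanism is the same as the paper's: reduce to the Proudman--Johnson setting, plant a $|x|^{2-\epsilon}$-type cusp at the repelling fixed point $x=0$ of the characteristic flow, and amplify the gap $d=a(t,z(t))-a(t,0)$ through the exact Riccati identity $\dot d=(p+m)d$, so that the gap grows like $e^{2\Lambda}$ while the backward characteristic only contracts like $e^{-\Lambda}$, leaving a net gain $e^{\epsilon\Lambda}$ that is incompatible with relaxation. The paper organizes this differently: it sets $a(t,x)=\frac{\tau^*-\tau}{\mu\tau^*}b(\tau,x)$ with the \emph{explicit} time change $t=\mu\tau^*\ln\frac{\tau^*}{\tau^*-\tau}$, which converts \eqref{TH4-behavior-2} into exact convergence $a\to\bar\mu\cos$ at the price of a vanishing zeroth-order term $\theta(t)a$ in the equation, and then literally reruns the proof of Theorem \ref{Th-main-2} (forward characteristics from $z_0$, evaluated at $t_0(z_0)\sim\frac1\mu\ln\frac{\kappa_0}{|z_0|}$, contradiction with boundedness of $\|a\|_{L^\infty}$). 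You instead keep the change of variables \eqref{def-a}, which preserves \eqref{eq-main} exactly but only gives relaxation of the \emph{ratio} $a/m(t)$, and you conclude via the ODE $\Lambda'\geq k e^{(\epsilon/2)\Lambda}$. These are dual bookkeepings of the same estimate.

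There is, however, one genuine gap, precisely at the point where the two reductions differ. Your final contradiction is ``$\Lambda'\geq ke^{(\epsilon/2)\Lambda}$ forces $t$ to stay bounded, contradicting $t\to\infty$''. But with the change of variables \eqref{def-a}, the hypothesis \eqref{TH4-behavior-2} only yields $\frac{d}{d\tau}\ln\nu=\frac{1+o(1)}{\tau^*-\tau}$, hence $\nu=(\tau^*-\tau)^{-(1+o(1))}$ with an unspecified $o(1)$; this is compatible with $\nu$ being integrable up to $\tau^*$ (e.g.\ $\nu\sim(\tau^*-\tau)^{-1}(\ln\frac{1}{\tau^*-\tau})^{-2}$ is of this form), in which case $T=\int_0^{\tau^*}\nu\,d\bar\tau<\infty$ and the new time variable does \emph{not} tend to $+\infty$. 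Then the differential inequality merely bounds the blow-up time of $\Lambda$ and there is no contradiction. The argument is salvageable --- either by translating the conclusion $m(t)\geq ke^{(\epsilon/2)\Lambda(t)}$ back to the $\tau$ variable, where $\ln b(\tau,0)-\ln\nu(\tau)=o(1)\ln\frac{1}{\tau^*-\tau}$ cannot dominate $\frac{\epsilon}{2}(1+o(1))\ln\frac{1}{\tau^*-\tau}$, or, as the paper does, by choosing the logarithmic time change from the outset so that $t\to\infty$ is automatic --- but as written the step ``contradicting $t\to\infty$'' is unjustified and can fail. The remaining issues (verifying $m(s)>0$ and absorbing the pre-asymptotic time interval and the $O(X^2)$, $o(1)$ losses into $o(\Lambda)$ corrections) are the routine bookkeeping you already flag, and the margin $\epsilon>0$ does cover them.
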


 {Notice that the solution $b$ constructed in Theorem \ref{Th-main-4} may blow-up, but not in the nearly self-similar form given by \eqref{b-behavior}-\eqref{b-behavior-remainder}.}

\subsection{Stability for the family of stationary states of the Proudman-Johnson equation}

The result at the heart of the present article is the asymptotic stability by smooth enough perturbations of the family of stationary states $\{\mu \cos(x)\}_{\mu>0}$ to the Proudman-Johnson equation \eqref{eq-main}. This implies asymptotic stability results for the special classes of solutions to the inviscid primitive equations (or hydrostatic Euler equations), and Euler equations, as discussed in Subsection \ref{other-fluids}.

\begin{thm}\label{Th-main}

There exists $\delta>0$ such that for any $\mu>0$ the following hold. Let $a_0\in C^{3}(\Omega)$ satisfy the mean free condition $\int_{-\pi}^\pi a_0=0$ and
\begin{equation}\label{Th-icmain}
\|a_0(\cdot) -\mu \cos ( \cdot)\|_{C^3([-\pi,\pi])}= {\sigma} \leq \delta \mu.
\end{equation}
Then the equation \eqref{eq-main} has a unique global-in-time $C^3$ solution $a(t, x)$ with $a(0, x)=a_0(x).$ Moreover,  it satisfies
\begin{equation}\label{a-behavior}
\|a(t, \cdot)-\mu_* \cos(\cdot)\|_{C^1(\Omega)}\lesssim  {\sigma} \, e^{-\mu t/2},
\end{equation}
for all $t\geq 0$, where $\mu_*=-\partial_x^2 a_0(x^*_0)=\mu+O(\sigma)$ with $x_0^*$ being the point at which $a_0$ attains its maximum.

\end{thm}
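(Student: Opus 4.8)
The plan is to reduce to the case $\mu=1$ by the scaling symmetry \eqref{id:scaling-symmetry}, invoke local well-posedness in $C^3$, and run a continuity/bootstrap argument: on any interval $[0,T]$ on which $a(t,\cdot)$ stays $O(\sigma)$-close in $C^3$ to a cosine profile, I improve this to exponential convergence, which simultaneously precludes blow-up and yields global existence. The backbone of the analysis is the material derivative $\frac{D}{Dt}=\partial_t+A\partial_x$, with $A=\int_{-\pi}^x a$, along the characteristics $\dot\Phi_t=A(t,\Phi_t)$; note that $A(t,\pm\pi)=0$ by \eqref{bc}, so $\pm\pi$ are fixed points and the flow preserves $[-\pi,\pi]$. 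Differentiating \eqref{eq-main} and using $\partial_x A=a$ gives the clean system
\[
\frac{Da}{Dt}=a^2-N(t),\qquad \frac{D(\partial_x a)}{Dt}=a\,\partial_x a,\qquad \frac{D(\partial_x^2 a)}{Dt}=(\partial_x a)^2,
\]
where $N(t)=\frac1\pi\int_{-\pi}^\pi a^2\,dx$. These local PDE identities, rather than a fully Lagrangian change of variables, are the Eulerian workhorses.

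The first step is to identify the limit profile. From the second identity, a critical point of $a$ is transported by the flow; by nondegeneracy of the maximum of $\cos$ and $C^2$-closeness, $a(t,\cdot)$ keeps throughout the bootstrap a unique interior maximum at $x^*(t)=\Phi_t(x_0^*)$ with $\partial_x a(t,x^*(t))=0$. Evaluating the third identity there, the source $(\partial_x a)^2$ vanishes, so
\[
\mu_*:=-\partial_x^2 a(t,x^*(t))=-\partial_x^2 a_0(x_0^*)
\]
is \emph{conserved}. This both pins the limit (the only steady states of the relevant form being $\mu\cos$, by Proposition \ref{Le-SS}) and gives $\mu_*=1+O(\sigma)$ since $x_0^*=O(\sigma)$. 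This selection mechanism also explains the sharpness of the $C^2$ threshold: the argument hinges on the curvature at the maximum being a well-defined, bounded, conserved quantity, precisely what degenerates below $C^2$, in agreement with Theorem \ref{Th-main-4}.

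For the decay, I set $\tilde a=a-\mu_*\cos$, $\tilde w=\partial_x\tilde a$, $\tilde z=\partial_x^2\tilde a$ and linearize \eqref{eq-main} as $\partial_t v+Lv=Q(v)$ with $Q$ quadratic and $L v=\sin x\,\partial_x v-\sin x\int_{-\pi}^x v-2\cos x\,v+\frac2\pi\int_{-\pi}^\pi\cos x\,v\,dx$. A direct computation records $L\cos=0$ (the neutral $\mu_*$-family direction) and $L\sin=\sin$, so the other low modes are damped at rate $\geq 1$. The core estimate is then a \emph{weighted energy estimate} for $\tilde z$ adapted to the characteristic flow: the transport $\sin x\,\partial_x$ repels from the maximum $x=0$ at unit rate and is attracted to, and damped at, the boundaries $x=\pm\pi$ (where $a\to-\mu_*<0$ makes $\frac{D(\partial_x a)}{Dt}=a\,\partial_x a$ dissipative). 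Choosing weights degenerating at $x=0,\pm\pi$ to match the flow, integrating by parts, and absorbing the nonlocal terms $\sin x\int v$ and $\frac1\pi\int\cos x\,v$ (lower order and controlled by $\sigma$), I obtain coercivity and exponential decay of $\tilde z$, hence of $\tilde w$ and $\tilde a$ by integration and the mean-free condition \eqref{bc}. The $C^1$ rate $e^{-t/2}$ our estimates yield (i.e. $e^{-\mu t/2}$) reflects the thin layer of width $\sim e^{-t}$ forming near the repelling maximum, where part of the decay budget is spent controlling the stretching; this is also why three derivatives are consumed to close a first-derivative estimate.

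Finally, one feeds the decay back: $N(t)-\mu_*^2=\frac1\pi\int\tilde a\,(a+\mu_*\cos)\,dx$ inherits the decay, closing the self-consistency in $\frac{Da}{Dt}$, while the quadratic and nonlocal remainders are $O(\sigma^2 e^{-t/2})$ and are absorbed, improving the bootstrap constants and producing the global solution with \eqref{a-behavior}. \textbf{The main obstacle} is the degeneracy of the transport at the repelling maximum $x=0$: there the flow stretches perturbations, the $\cos$ and $\sin$ modes are (nearly) neutral, and the nonlocal terms are not small relative to the degenerating weight. Reconciling the coercivity of $L$ with the sharp exponential rate in this region, while simultaneously tracking the modulation of $x^*(t)$ and $\mu_*$ and avoiding derivative loss, is the heart of the proof and is what forces the $C^3$ regularity.
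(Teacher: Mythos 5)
Your overall skeleton matches the paper's: reduce to $\mu=1$ by scaling, use the material-derivative identities to show that the maximum is transported and that $-\partial_x^2 a$ is conserved along the characteristic through it (this is exactly the paper's Lemmas \ref{lem:propagation-maximum}--\ref{lem:conservation-second-derivative-zero-set-derivatives} and Corollary \ref{cor:propagation-maxima}, selecting $\mu_*$), translate the maximum to the origin, exploit that the origin is repelling for the characteristics with a weight degenerating there, and close by bootstrap. The paper works with weighted $L^\infty$ bounds via the comparison principle rather than weighted energy estimates, but that is a difference of technique, not of substance. However, there are two genuine gaps in your linear analysis. First, your claim that apart from the neutral direction $\cos$ ``the other low modes are damped at rate $\geq 1$'' is false: the linearized operator has an explicit \emph{unstable} eigenfunction $\varphi_1(y)=2\cos y+1+y\sin y$ with eigenvalue $+1$ (Lemma \ref{Le-3.1}), so the linear flow genuinely grows like $e^{+t}$ in that direction. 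The paper must decompose $\eta=\xi+\alpha_{-1}\varphi_{-1}+\alpha_1\varphi_1$ and cannot integrate the unstable ODE $\dot\alpha_1=\alpha_1-\dots$ forward; instead $\alpha_1$ is \emph{recovered} from the mean-free condition as $\alpha_1=-\frac{1}{4\pi}\int_{-\pi}^\pi\xi$ (Lemma \ref{lem:mu-alpha1}), since $\varphi_1$ is the only one of the three eigenmodes with nonzero mean. Without identifying this mode and this mechanism, the bootstrap cannot close; your proposal never confronts it.

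Second, you dismiss the nonlocal terms $\sin x\int_{-\pi}^x v$ and $\frac{2}{\pi}\int_{-\pi}^{\pi}\cos\bar x\, v\,d\bar x$ as ``lower order and controlled by $\sigma$.'' They are linear in $v$ and of exactly the same size as the local terms, so they cannot be absorbed as small perturbations of the coercive part. The paper needs a specific device for each: the term $\sin x\int_0^x v$ becomes a strict contraction (with constant $\theta<1$) in the weighted norm only because the weight carries an extra exponential factor $\widetilde W_\theta=e^{C|x|/\theta}$ designed for this purpose (Lemma \ref{Le-2.3}); and the rank-one term $\frac{2}{\pi}\int\cos\bar x\,v$ is not estimated at all but \emph{cancelled exactly} by the choice of the modulation equations for $\alpha_{\pm1}$, using $1=\frac12(\varphi_{-1}+\varphi_1)$. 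These two points — the unstable eigenmode tamed by the mean-free constraint, and the exact removal/contraction of the nonlocal terms — are the heart of the paper's proof, and your proposal as written would fail without them.
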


Let us make a few remarks on Theorem \ref{Th-main}.

\begin{rmk} \label{rk:ipe}\ \\
\emph{- The point $x_0^*$ is well-defined.} For $\delta>0$ small enough, we remark that $x^*_0$ is well defined (see Lemma \ref{lem:characterization-x0*}) by standard differential calculus arguments.

\smallskip

\noindent \emph{- Determination of the asymptotic state by a conservation law.} The fact that the asymptotic stationary state among the full family of stationary states $\{\tilde \mu \cos ( x) \}_{\tilde \mu>0}$ can be determined from the initial datum, via the identity $\mu_*=-\partial_x^2 a_0(x^*_0)$, is remarkable. It is due to monotonicity formulas and conserved quantities along characteristics. We refer to section \ref{sec:monotonicity-characteristics} for more details.

\smallskip

\noindent \emph{- Optimal regularity range for the initial perturbation for stability.} The $C^3$ regularity is the simplest regularity to prove Theorem \ref{Th-main}. In fact, our arguments could yield stability, in the sense that the conclusions of the Theorem still hold true, for initial small perturbation in $C^{2+\epsilon}$ for any $\epsilon>0$. It would only be a technical extension.

\smallskip

\noindent \emph{- Different regularities for stability and for measuring convergence}. We just discussed that the initial perturbation needs to be $C^3$ (or $C^{2+\epsilon}$) in order for the solution to converge to a stationary state in the sense of \eqref{a-behavior}. The convergence in \eqref{a-behavior} is stated in $C^1$, and in fact our arguments could yield convergence in $C^{2-\epsilon}$ for any $0<\epsilon< 1$; it would only be a technical extension. However, it is to be noted that we believe this convergence \emph{does not hold} in $C^2$. This is because the linearized dynamics near $-\pi$ (and also near $\pi$) has a potential term that damps the perturbation but a transport term that compresses the characteristics. Namely, to leading order the dynamics for the perturbation $\tilde a$ near $-\pi$ is $\partial_t \tilde a = -2\tilde a +(x+\pi)\partial_x \tilde a +l.o.t$. So we expect that near $-\pi$ the perturbation $\tilde a$ has the form $\tilde a = e^{-2t}f(e^t(x+\pi))+l.o.t.$. Clearly, all derivatives of order $<2$ decay to $0$ while derivatives of order $>2$ grow and the second order derivative does not decay.

\smallskip

\noindent \emph{- On the stability of $\mu \cos(x)$ for $\mu<0$.} We remark that $\mu \cos(x)$ is also a stationary solution to \eqref{eq-main} for $\mu<0$. In the class of $C^3([-\pi,\pi])$ solutions, we expect such stationary states to be unstable, because their maximum is attained at the boundary $\{-\pi,\pi\}$. Perturbed initial data could have their maximum attained in the interior $(-\pi,\pi)$ in which case we believe the characteristics will push this maximum towards the center. The stationary states for $\mu>0$ and $\mu<0$ are thus completely different when it comes to their stability.

\end{rmk}

In fact, the asymptotic stability proved in Theorem \ref{Th-main} only holds for smooth enough perturbations. In the next theorem, we show instability for certain small perturbations in $C^{2-\epsilon}$ for any $0<\epsilon\leq 2$.  Note that the instability of $\cos(x)$ will imply that  of the family of stationary solutions $\{\mu\cos(x)\}_{\mu>0},$ thanks to the symmetry scaling \eqref{id:scaling-symmetry}.

\begin{thm} \label{Th-main-2}
For any $\epsilon,  \sigma>0$ there exists an initial data $a_0$ which satisfies the mean free condition $\int_{-\pi}^\pi a_0=0$ and
\begin{equation}\label{Th-ic}
\|a_0 -\cos ( x)\|_{C^{2-\epsilon}(\Omega)}\leq \sigma,
\end{equation}
such that for all $\mu\in\R\setminus\{0\}$ the problem  \eqref{eq-main} cannot have a global-in-time solution that satisfies
\begin{equation}\label{TH2-behavior}
\|a(t, \cdot)-\mu \cos(\cdot)\|_{L^\infty([-\pi,\pi])}\to 0,\qquad t\to\infty.
\end{equation}
\end{thm}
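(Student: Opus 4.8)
The plan is to locate the instability entirely at the interior maximum of the profile, where the characteristics of \eqref{eq-main} are \emph{expanding}, which is the exact counterpart of the contracting boundary dynamics discussed in Remark \ref{rk:ipe}. Writing $v(t,x)=\int_{-\pi}^x a(t,\bar x)\,d\bar x$ and linearizing around $\cos$, near the boundary $x=-\pi$ one finds $\partial_t\tilde a=-2\tilde a+(x+\pi)\partial_x\tilde a$, whose characteristics contract and damp the perturbation (its $C^{2-\epsilon}$ norm \emph{decays}); near the maximum $x_0^*\approx 0$ the linearized equation reduces, to leading order, to $\partial_t\tilde a=-x\partial_x\tilde a+2\tilde a$, whose characteristics $\dot x=x$ push mass \emph{away} from the center. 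A direct substitution shows that for \emph{any} profile $F$ the function $\tilde a(t,x)=x^2F(\log|x|-t)$ solves this reduced equation, so every nonconstant $F$ produces a genuinely time-dependent mode. The constant mode $F\equiv\mathrm{const}$ is the conserved curvature $-\partial_x^2 a(x_0^*)$ that selects the asymptotic amplitude $\mu_*$ of Theorem \ref{Th-main}, while modes that are merely $C^{2-\epsilon}$ but not $C^2$ have their $C^{2-\epsilon}$ seminorm \emph{grow} like $e^{\epsilon t}$; this is precisely why stability requires $C^{2+\epsilon}$ data and fails below $C^2$.

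First I would fix $\epsilon,\sigma>0$ and take
\[
a_0(x)=\cos x+\sigma'\,\chi(x)\,x^2\sin(\log|x|),
\]
where $\chi$ is a smooth cutoff supported in a small neighborhood of the maximum $x=0$ and $\sigma'\simeq\sigma$ is chosen so that $\|a_0-\cos\|_{C^{2-\epsilon}([-\pi,\pi])}\le\sigma$; the mean-free condition \eqref{bc} is enforced by adding a smooth, compactly supported, mean-zero correction away from the center, which does not affect the local analysis. The added feature belongs to $C^{1,1}\subset C^{2-\epsilon}$ for every $0<\epsilon\le 2$ but is \emph{not} $C^2$, since its second derivative equals $\sin(\log|x|)+3\cos(\log|x|)+o(1)$ and has no limit at the origin. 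Taking $F=\sin$ in the reduced flow gives, at the linear level, $\tilde a(t,x)=\sigma' x^2\sin(\log|x|-t)$, which stays uniformly $O(\sigma')$ in $L^\infty$ yet, at every fixed $x_1\ne 0$ near the center, oscillates in time with amplitude $\sigma' x_1^2$ and \emph{never} converges. Thus the linearization already predicts that $a(t,\cdot)$ cannot converge in $L^\infty$ to any fixed function.

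To upgrade this to the true nonlinear, nonlocal evolution I would argue by contradiction: suppose \eqref{eq-main} has a global solution with $\|a(t,\cdot)-\mu\cos\|_{L^\infty}\to 0$ for some $\mu\ne 0$. I would track the maximum $x^*(t)$ and the curvature along characteristics through the exact identities $\frac{D}{Dt}\partial_x a=a\,\partial_x a$ and $\frac{D}{Dt}\partial_x^2 a=(\partial_x a)^2$, together with $\dot x^*=v(t,x^*)$, which make $\partial_x^2 a$ transported and exactly conserved at the maximum (where $\partial_x a=0$). The Lagrangian map near $x^*$ expands like $J(t)=\exp\!\big(\int_0^t M\big)$ with $M=\max a\to\mu$, so evaluating the transported curvature at a fixed Eulerian point $x^*(t)+\xi$ samples the initial curvature at the shrinking Lagrangian position $\xi/J(t)\to 0$, i.e.\ through the values $\sin\!\big(\log\xi-\int_0^t M\big)$, up to a correction bounded by $O(\xi^2)$ coming from $\int(\partial_x a)^2$. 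Integrating twice in $\xi$ produces a term $\sigma'\xi^2\sin\!\big(\log\xi-\int_0^t M\big)+O(\xi^4)$ in $a(t,x^*(t)+\xi)-M(t)$ that oscillates in $t$ with amplitude $\simeq\sigma'\xi^2$; fixing $\xi$ small then contradicts convergence.

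The hard part is the infinite-time control of these corrections for the full equation: I must show that the nonlocal terms, the self-interaction $\big(\int_{-\pi}^x\tilde a\big)\partial_x\cos$, and the drift of $(x^*(t),M(t))$ are all subleading near the center and cannot damp the logarithmic oscillation, i.e.\ that no hidden smoothing converts the non-$C^2$ datum into a $C^2$ one over infinite time. A second delicate point is the range of $\mu$: the expansion at the center is driven by $M=\max a>0$, so the mechanism applies directly whenever the limiting maximum stays interior (in particular for $\mu>0$); for $\mu\le 0$, whose limiting maximum would sit at $\{-\pi,\pi\}$, I would separately preclude convergence by showing that an interior maximum with value close to $1$ cannot migrate to the boundary, again via the monotonicity of characteristics highlighted in Remark \ref{rk:ipe}. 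The corresponding instability for $b$ in Theorem \ref{Th-main-4} then follows through the change of variables \eqref{def-a}.
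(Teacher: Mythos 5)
Your proposal follows a genuinely different mechanism from the paper's, and as written it has a gap that you yourself flag but do not close. The paper perturbs $\cos$ by a \emph{supercritical cusp} $|x|^{2-\frac{\epsilon}{2}}$ at the interior maximum and runs a purely two-point argument: the difference $D(t)=a(t,0)-a(t,z(t))$ satisfies the exact Riccati-type identity $\dot D=(a(t,0)+a(t,z(t)))D$, hence grows at rate $e^{(2\mu-\gamma)t}$, while the characteristic $z(t)$ expands only at rate $e^{(\mu+\gamma)t}$; over the time $t_0\sim\frac1\mu\ln(\kappa_0/|z_0|)$ it takes $z$ to leave a $\kappa_0$-neighbourhood of the origin, $D$ is amplified by $(\kappa_0/|z_0|)^{2-O(\gamma)}$, and since $D(0)=|z_0|^{2-\frac{\epsilon}{2}}\gg|z_0|^2$ this forces $\|a(t_0)\|_{L^\infty}\gtrsim|z_0|^{-\epsilon/4}\to\infty$. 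Crucially, every step uses only the $L^\infty$ information contained in the contradiction hypothesis \eqref{TH2-behavior}. Your datum $x^2\sin(\log|x|)$ is exactly \emph{critical} in amplitude ($\Theta(x^2)$), so this amplitude-growth mechanism gives nothing for it, and you are committed to the phase/oscillation argument.

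The gap is that your oscillation argument requires quantitative information that \eqref{TH2-behavior} does not provide. To isolate the term $\sigma'\xi^2\sin(\log\xi-\int_0^tM)$ in $a(t,x^*+\xi)-a(t,x^*)$ you must (i) make sense of and control $\partial_x^2a$ along characteristics for a solution emanating from merely $C^{1,1}$ data, (ii) bound the accumulated correction $\int_0^t(\partial_xa)^2(s,X(s,z))\,ds$ by $O(\xi^2)$ uniformly in time, and (iii) control the phase $\log|z(t,\xi)|$ well enough that the oscillation survives integration in $\xi$. Steps (ii) and (iii) need uniform-in-time pointwise bounds of the form $|\partial_xa(s,x)|\lesssim|x-x^*(s)|$ near the maximum; the contradiction hypothesis is only $L^\infty$ convergence and gives no control whatsoever on derivatives of $a$ for large times, so the correction term (which is sign-definite but not bounded above without such control) could in principle swamp the oscillation. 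This is precisely the ``hard part'' you name, and it is the heart of the matter rather than a technicality. If you want to salvage your strategy, the cleanest fix is to replace your critical perturbation by the paper's supercritical one and switch to the two-point growth argument, which closes with $L^\infty$ information alone; your treatment of the case $\mu\le0$ can likewise be simplified to the observation that for even data the origin is fixed by the flow, the maximum is transported (Lemma \ref{lem:propagation-maximum}), and hence $\mu=\lim a(t,0)=\lim\max a(t,\cdot)=|\mu|>0$.
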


\begin{rmk}

It is clear from the proof that the solution we construct exhibits exponential instability in $C^{2-\epsilon}$ for a first period of times. The result of Theorem \ref{Th-main-2} is actually stronger than a short time instability  as it holds for large times: the solution can never converge back to any stationary state in the family $\{\mu \cos(x)\}_{\mu\neq 0}$.

\end{rmk}

\bigbreak

\subsection{Organization of the article and sketch of proof}
Here,  we give a sketch of proof of the nonlinear stability  of  the steady state $\cos( x)$ for Equation \eqref{eq-main}. We recall that Equation \eqref{eq-main} admits the scaling symmetry \eqref{id:scaling-symmetry}. Therefore, the stability of $\cos(x)$ will imply that of $\mu \cos(x)$ for $\mu>0$ by a scaling argument. However,  due to the presence of nonlocal term $\int_{-\pi}^x a(t,  \bar x)\,d\bar x$,  Equation \eqref{eq-main} is not translation invariant.

\medskip

 Define the perturbation $\varepsilon=a-\cos ( x),$ it is easy to find that
\begin{equation}\label{eq-vareps}
\partial_t \veps+  \left(\int_{-\pi}^x \veps(t, \tilde x) \,d\tilde x\right) \partial_x \veps  -\veps^2 +\frac{1}{\pi}  \int_{-\pi}^{\pi}\veps^2\,dx=\mathcal{L}_0(\veps),\quad x\in [-\pi,\pi],
\end{equation}
where the associated linear operator $\mathcal{L}_0$ takes the form
\begin{equation*} 
\mathcal{L}_0(\veps)= 2\cos( x) \,\veps -\sin ( x)\,\partial_x \veps +\sin( x) \int_{-\pi}^x \veps(t, \bar x) \,d\bar x - \frac{2}{\pi}  \int_{-\pi}^{\pi}\cos ( x)\, \veps(t,   x)\,dx.
\end{equation*}
 
 In Section \ref{S2},  we first classify all stationary solutions of \eqref{eq-main}, which tells all the possible stationary solutions nearby $\cos(x).$
 Then,  we  present some monotonicity formulas and conserved quantities along characteristics,  that will later help to select the infinite-time attractor of the global solution, and to perform a change of variables for the nonlinear stability analysis.
 
 \medskip
 
  In Section \ref{sec3},   we focus on linear analysis of the problem.   We provide a key Proposition \ref{Prop-quasi} on \emph{weighted linear  damping} for a linearized problem with {\em well-prepared initial data},  on a moving spatial interval perturbated around $[-\pi, \pi]$.     This result is achieved at first for the  {\em reduced linear operator} 
\begin{equation}\label{eq-linearL0}
 L_0 (\veps)= 2 \cos ( x) \,\veps-\sin ( x)\, \partial_x \veps
\end{equation}
with general admissible weights via comparison principle. For the original operator $\mathcal{L}_0$, in addition to the terms in $L_0$, it is needed to analyze the two nonlocal terms.    By providing a well-designed admissible weight,  we are able to handle the first nonlocal term  $\sin(x)\int_0^x \veps(\bar x)\,d\bar x$ involved in the  linearized problem. Combining with the comparison principle, we finally obtain Proposition \ref{Prop-quasi}. The second nonlocal term $\frac{2}{\pi}  \int_{-\pi}^{\pi}\cos ( x)\, \veps(t,   x)\,dx$ is handled via modulation as explained below.
 
 \medskip
 
Section \ref{S4} is devoted to reformulating the problem \eqref{eq-vareps}  as a similar problem to that which was studied at the linear level  in Section \ref{sec3}.   It will be divided into two steps.  In the first step,  we introduce suitable  rescaling  and   space-translation $x^*(s)$ to  modulate the parameters of the family of functions $\{\mu \cos(x-x^*)\}_{\mu,x^*}$ and 
transform the full linear operator $\mathcal{L}$ into a sum  of $L_0$ with two nonlocal terms
$$  \sin(y)\int_0^y \eta(\bar y)\,dy\and -\frac{2}{\pi}\int_{-\pi}^{\pi}\cos(y)\eta(y)\,dy$$
where $\eta$ denotes the new perturbation. In the second step,  we decompose $\eta$ into its projection on certain eigenmodes of $\mathcal{L}$, plus a remainder part. The modulation parameters for the projection are chosen so as to remove the second nonlocal term $-\frac{2}{\pi}\int_{-\pi}^{\pi}\cos(y)\eta(y)\,dy$. After these operations,   the  original problem \eqref{eq-vareps} will finally be recast to problem \eqref{eq-xi}--\eqref{definition-xi0}.
Section \ref{S5} is concerned with the nonlinear analysis i.e.  the proof of Theorem \ref{Th-main}. It mainly consist in closing the desired estimates by applying  Proposition \ref{Prop-quasi} and estimating the projection parameters. Interestingly, one eigenmode is instable, but its decay is ensured for the full nonlinear problem thanks to the mean-free condition \eqref{bc}, highlighting its stabilizing effect.

\medskip

 The proof of Theorem \ref{Th-main-2} in Section \ref{sec:instability} is based on the use of the characteristics for a well-designed perturbation producing a cusp-like maximum, see Lemma \ref{lem:technical-instability} for more details. The proofs of Theorems \ref{Th-IPM} and \ref{Th-main-4} in Section \ref{sec:IPM-results} are obtained via the change of variables \eqref{def-a}.

%%%%%%%%%%%%%%%%%%%%%%%%%%%%%%%%%%%%%%%%%%%%%%%%

\section{Steady states, conserved quantities and monotonicity formulas}\label{S2}

\subsection{Classification of steady states}

We provide in this subsection a classification of the steady states of \eqref{eq-main}. It is remarkable that they are given by simple expressions. Steady states are important since they not only provide explicit solutions, but also natural candidates for describing the large-time behavior of general solutions.

Steady states of the system \eqref{eq-main} are functions $a$ solving the stationary equation

\begin{equation} \label{eq-main-stationary}
 \left(\int_{-\pi}^x a(t, \bar x) \,d\bar x\right) \partial_x a -a^2 +\frac{1}{\pi}  \int_{-\pi}^{\pi}a^
2\,dx=0 .
\end{equation}

It is easy to check that $\{\mu\cos(kx)\}_{\mu\in\R, \,k\in\mathbb N}$ and $\{\mu\sin(\frac{2k+1}{2}x)\}_{\mu\in\R, \,k\in\mathbb N}$ are two classes of  stationary solutions of \eqref{eq-main}.   In fact,  under mild assumptions,  these are the only stationary solutions to the equation with mean-free boundary condition.

\begin{prop}[Classification of steady states]\label{Le-SS}
A function $a \in C^2([-\pi, \pi])$ satisfies the mean free condition $\int_{-\pi}^\pi a=0$ and solves the stationary equation \eqref{eq-main-stationary} if and only if it is either of the form $a=\mu\cos(kx)$ for some $\mu\in\R$ and $k\in\mathbb N$ or of the form $a=\mu\sin(\frac{2k+1}{2}x)$ for some $\mu\in\R$ and $k\in\mathbb N.$
\end{prop}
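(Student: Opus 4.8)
\emph{The plan is to} pass to the primitive of $a$, which turns the nonlocal stationary equation into an autonomous second order ODE, and then to integrate that ODE via a phase-plane first integral. First I would set $A(x)=\int_{-\pi}^x a(\bar x)\,d\bar x$, so that $A\in C^3([-\pi,\pi])$, $A'=a$, $A(-\pi)=0$, and the mean-free condition reads $A(\pi)=0$. Writing $c:=\frac1\pi\int_{-\pi}^\pi a^2\,dx\ge 0$ (a genuine constant, independent of $x$), the stationary equation \eqref{eq-main-stationary} becomes
\[
A\,A''-(A')^2+c=0 .
\]
If $c=0$ then $\int_{-\pi}^\pi a^2=0$ forces $a\equiv 0$ (covered by $\mu=0$), so from now on I assume $a\not\equiv 0$, i.e.\ $c>0$ and $A\not\equiv0$.

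Second, I would produce a first integral. Evaluating the ODE at any zero $x_0$ of $A$ gives $(A'(x_0))^2=c>0$, so every zero of $A$ is simple; in particular the zeros are isolated, hence finite in number on $[-\pi,\pi]$, and they split the interval into finitely many nodal subintervals on each of which $A$ has a constant sign. On each such open subinterval the quantity $H:=\dfrac{(A')^2-c}{A^2}$ is constant, since
\[
H'=\frac{2A'\big(A A''-(A')^2+c\big)}{A^3}=0 .
\]
\textbf{Patching the constant across the nodes is the main obstacle}, because $H$ is a priori singular where $A=0$. I would resolve it as follows: on a nodal interval $H\equiv C_1$ means $(A')^2=c+C_1A^2$ and, differentiating, $A''=C_1A$ there; letting $x\to x_0$ at a node gives $A''(x_0)=0$, and differentiating once more gives $A'''(x_0)=C_1A'(x_0)$. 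Since $a\in C^2$ means $A\in C^3$, the function $A'''$ is continuous, and since $A'(x_0)\ne 0$ by simplicity, the value $C_1=A'''(x_0)/A'(x_0)$ is forced to agree on the two sides of every node. Hence $C_1$ is a single constant and $(A')^2=c+C_1A^2$ holds on all of $[-\pi,\pi]$.

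Finally, I would determine the sign of $C_1$ and solve. If $C_1\ge 0$ then $(A')^2\ge c>0$, so $A$ is strictly monotone and has at most one zero, contradicting $A(-\pi)=A(\pi)=0$ with $A\not\equiv 0$; therefore $C_1=-\omega^2$ with $\omega>0$. Then $(A')^2=c-\omega^2A^2$, and differentiating gives $A'(A''+\omega^2A)=0$; as $A'$ cannot vanish on a whole subinterval (else $A$ would be a nonzero constant, incompatible with $A(\pm\pi)=0$), continuity yields $A''+\omega^2A=0$ on $[-\pi,\pi]$, whence $A(x)=R\sin(\omega x+\phi)$. Imposing $A(-\pi)=A(\pi)=0$ forces $2\omega$ to be a positive integer and pins the phase: when $\omega=k$ is a positive integer one gets $\phi\in\pi\Z$ and $a=A'=R\omega\cos(kx+\phi)=\mu\cos(kx)$, while when $\omega=\tfrac{2k+1}2$ one gets $\phi\in\tfrac\pi2+\pi\Z$ and $a=\mu\sin(\tfrac{2k+1}2 x)$. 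These are exactly the two announced families (with $\mu=\pm R\omega$), while the reverse inclusion is the elementary verification already noted before the statement.
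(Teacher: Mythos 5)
Your proof is correct and reaches the same classification, but by a route that differs from the paper's in a substantive and, in places, cleaner way. The paper differentiates the stationary equation once to obtain the third-order relation $A\,A'''=A'\,A''$, thereby discarding the constant $c=\frac1\pi\int_{-\pi}^\pi a^2$; it must then work on intervals where both $A\neq 0$ and $A''\neq 0$, determine the sign of the constant in $A''=m_I A$ through a four-case monotonicity analysis, and patch the nodal intervals one by one. You instead keep the second-order equation $A\,A''-(A')^2+c=0$ with $c>0$ explicit, which buys you three simplifications: every zero of $A$ is automatically simple (so the nodal decomposition is immediate and finite), the phase-plane first integral $H=((A')^2-c)/A^2$ replaces the logarithmic-derivative trick, and the sign of $C_1$ follows from a one-line monotonicity contradiction with $A(-\pi)=A(\pi)=0$. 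Both proofs exploit the $C^3$ regularity of $A$ in the same essential way, namely by matching third derivatives at the nodes, where $A=A''=0$ and $A'\neq 0$. Two small blemishes in your write-up, neither of which is a genuine gap: to get $A''=C_1A$ on a nodal interval you should quote the equation itself ($A\,A''=(A')^2-c=C_1A^2$, then divide by $A\neq0$) rather than differentiate $(A')^2=c+C_1A^2$, since the latter only yields $A'(A''-C_1A)=0$; and the parenthetical in your last paragraph (``else $A$ would be a nonzero constant, incompatible with $A(\pm\pi)=0$'') does not by itself rule out $A'$ vanishing on an interior subinterval, but that whole step is redundant because $A''=-\omega^2 A$ already holds on every nodal interval and extends to all of $[-\pi,\pi]$ by continuity of $A$ and $A''$.
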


\begin{rmk}
Among all stationary states, the ones of the form $\mu \cos(x)$ for $\mu>0$ are the only ones attaining their maximum at a single point, inside $(-\pi,\pi)$. We believe that this is the key property that makes them stable and that other stationary states should be unstable.
\end{rmk}

\begin{proof}
We assume $a\neq 0$. Define $A(x)=\int_{-\pi}^x a(\bar x)\,d\bar x$,  then from \eqref{eq-main-stationary}, $A$ satisfies
\begin{equation} \label{mamiche}
A\, A'''= A'\,A'' .
\end{equation}

\noindent \textbf{Step 1}. \emph{Study on a non-degenerate interval}. Let $I=(x_1,x_2)\subset [-\pi,\pi]$ be an open interval on which $A,A''\neq 0$ and such that $A(x_1)=A(x_2)=0$. We claim that
\begin{equation} \label{reveil}
A= \mu_I \cos(\kappa_I x+ \varphi_I) \qquad \mbox{for }x\in I,
\end{equation}
for some constants $\kappa_I>0$, $\mu_I\neq 0$ and $\varphi_I\in \mathbb R$.

We now prove this claim. Dividing both sides of \eqref{mamiche} by $AA''$ we get $A'''/A''=A'/A$ on $I$. Hence $\frac{d}{dx} \log |A''|=\frac{d}{dx}\log |A|$ so that
\begin{equation} \label{sylvette}
A''=m_I A
\end{equation}
for some integration constant $m_I$.

We now claim that $m_I<0$. Suppose by contradiction that $m_I>0$ (notice $m_I$ cannot be $0$ as $A''\neq 0$ on $I$). Since $A''\neq 0$ on $I$, there must exist a point $x_*$ at which $A(x_*)\neq 0$ and $A'(x_*)\neq 0$. We consider first the case where $A(x_*)>0$ and $A'(x_*)>0$ at this point. Then the ODE \eqref{sylvette} implies that $A$ is increasing on $[x_*,x_2]$, so $A(x_2)\geq A(x^*)>0$, contradicting the fact that $A(x_2)=0$.  Second, we consider the case where $A(x_*)>0$ and $A'(x_*)<0$. Then the ODE \eqref{sylvette} implies that $A$ is decreasing on $[x_1,x_*]$, so that $A(x_1)\geq A(x_*)>0$, contradicting the fact that $A(x_1)=0$. Third we consider the cases where $A(x_*)<0$ and $A'(x_*)<0$, or  $A(x_*)<0$ and $A'(x_*)>0$, and we notice that by applying the same results as for the first and second cases to $-A$, we also get a contradiction. Hence all cases yield a contradiction, so that $m_I<0$.

We introduce $\kappa_I=\sqrt{-m_I}$, and all solutions to \eqref{sylvette} are indeed of the form \eqref{reveil}.

\medskip

\noindent \textbf{Step 2}. \emph{Matching of two non-degenerate intervals}. In this step, we claim that if $A$ is a $C^3$ function  {(remember that $a\in C^2$ and $A'=a$)} near $x_0$ at which $A(x_0)=0$, such that on the left of $x_0$ we have $A(x)= \mu \cos(\kappa x+\varphi)$ and on the right of $x_0$ we have $A(x)= \mu' \cos(\kappa' x+\varphi')$ for some $\mu,\mu'\neq 0$, $\kappa,\kappa'>0$ and $\varphi,\varphi'\in \mathbb R$, then $ \mu \cos(\kappa x+\varphi)= \mu' \cos(\kappa' x+\varphi')$ for all $x\in \mathbb R$, i.e. $(\mu,\kappa,\varphi)=(\mu',\kappa',\varphi')$ (possibly up to adding to $\varphi$ a multiple of $2\pi$, or to adding $\pi$ to $\varphi$ and changing $\mu$ to $-\mu$).

We now prove this claim. Since both formulas on the left and right of $x_0$ vanish at $x_0$, we necessarily have $A(x)=\mu \cos(\kappa (x-x_0)+\frac \pi 2)$ and $A(x)=\mu' \cos(\kappa' (x-x_0)+\frac \pi 2)$ on the left and right of $x_0$ respectively, up to changing the sign of $\mu$ and $\mu'$ without loss of generality. Then matching the first order and third order derivatives at $x_0$ implies $\kappa=\kappa'$ and $\mu=\mu'$. Hence the claim.

\medskip

\noindent \textbf{Step 3}. \emph{End of the proof}. We have that $a=A'$ is not identically zero, with $A(-\pi)=0=A(\pi)$. Hence $A$ and thus $A''$ is not identically zero. Therefore, there exists at least one point $x_0\in (-\pi,\pi)$ at which $A''(x_0)\neq 0$ and $A(x_0)\neq 0$. Let $(x_{-1},x_{1})$ be the largest interval in $[-\pi,\pi]$ containing $x_0$ on which $A\neq 0$ and $A''\neq 0$. Then by \eqref{sylvette} we see that $A(x_{-1})=0$, since otherwise $x_{-1}>-\pi$ with $A(x_{-1})\neq 0$ and $A''(x_{-1})\neq 0$ so one could find a larger interval on which $A,A''\neq 0$ which would contradict the definition of $(x_{-1},x_1)$. Similarly, $A(x_1)=0$.

By Step 1 we have that $A$ is of the form $A= \mu \cos(\kappa x+ \varphi)$ on $(x_{-1},x_1)$ with $\mu\neq 0$ and $\kappa>0$ and $\varphi\in \mathbb R$. Suppose $x_1\neq \pi$. Then we must have $A(x_1)=A''(x_1)=0$. Using $A= \mu \cos(\kappa x+ \varphi)$ on $(x_{-1},x_1)$, we get that $A'(x_1),A'''(x_1)\neq 0$. Hence there exists an interval of the form $(x_1,x_2)$ on which $A,A''\neq 0$ and which is the largest interval of the form $(x_1,y)$ for $y\in (x_1,\pi]$ on which this holds. By Step 1 we have that $A$ is of the form $A= \mu' \cos(\kappa' x+ \varphi')$ on $(x_{1},x_2)$. By Step 2 we must have (without loss of generality) $\mu'=\mu$ and $\kappa'=\kappa$ and $\varphi'=\varphi$. Hence $A$ is of the form $\mu \cos(\kappa x+ \varphi)$ on $(x_{-1},x_2)$.

One then repeats this argument and find points $x_3$, $x_4$ and so on until one reaches $x_k=\pi$ and find that $A$ is of the form $\mu \cos(\kappa x+ \varphi)$ on $(x_{-1},\pi)$. Note that there are finitely many steps since $x_{i}-x_{i-1}=\frac{\pi}{\kappa}$ from the formula for $A$ and the facts that $A(x_i)=0$ for $i=1, 2, \cdots$.  One then proceeds the same way on the left and find points $x_{-2}$, $x_{-3}$, and so on until one reaches $x_{-k'}=-\pi$ and find eventually that $A=\mu \cos(\kappa x+ \varphi)$ on $(-\pi,\pi)$. The boundary conditions $A(-\pi)=A(\pi)=0$ then imply that $\kappa \pi +\varphi =\frac{2n_1+1}{2}\pi$ and $-\kappa \pi +\varphi =\frac{2n_2+1}{2}\pi$ for some $n_1,n_2\in \mathbb N$. Introducing $n=n_1-n_2$, this gives $\varphi=(\frac{n+1}{2}+n_2)\pi$ and $\kappa=\frac{n}{2}$. Considering the even case $n=2k$ or the odd case $n=2k+1$ we have
$$
A(x)=\mu \cos \Big(k x+\frac \pi 2\Big) \quad \mbox{or} \quad A(x)=\mu \cos \Big(\frac{2k+1}{2} x\Big)
$$
up to possibly changing $\mu$ to $-\mu$. This gives
$$
a(x)=k\mu \sin \Big(kx+\frac \pi 2\Big)=k\mu \cos(kx) \quad \mbox{or} \quad a(x)=\frac{2k+1}{2}\mu \sin \Big(\frac{2k+1}{2} x\Big)
$$
which ends the proof of the Proposition up to redefining $\mu$.
\end{proof}

\subsection{Monotonicity formulas and conserved quantities along characteristics} \label{sec:monotonicity-characteristics}

In this subsection we find some monotonicity formulas for a solution $a$ to \eqref{eq-main} and its derivatives along characteristics. We say that a time-dependent point $x^*$ is a characteristics starting from $x^*_0$ if 
$$
\frac{d}{dt} x^*(t)=\int_{-\pi}^{x^*(t)} a(t,x)dx \qquad \mbox{and}\qquad x^*(0)=x^*_0.
$$
Our first result is that maxima of solutions are transported along characteristics.

\begin{lem} \label{lem:propagation-maximum}

Assume that $a\in C^1([0,T]\times[-\pi,\pi])$ solves \eqref{eq-main}. Assume that $x\mapsto a(0,x)$ attains its maximum at $x_0^*$. Then for all $t\in [0,T]$, $x\mapsto a(t,x)$ attains its maximum at the characteristics $x^*$ starting from $x^*_0$.

\end{lem}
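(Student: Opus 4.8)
The plan is to track the value of $a$ along characteristics and to exploit the fact that the nonlocal forcing term in \eqref{eq-main} is spatially constant, so that it acts identically on every characteristic. Write $A(t,x)=\int_{-\pi}^x a(t,\bar x)\,d\bar x$ for the transporting velocity, so that $\partial_x A=a$ is continuous and $A$ is Lipschitz in $x$, uniformly on $[0,T]$. Standard ODE theory then provides, for each $x_0\in[-\pi,\pi]$, a unique characteristic $t\mapsto \Phi(t,x_0)$ solving $\partial_t \Phi=A(t,\Phi)$ with $\Phi(0,x_0)=x_0$, depending continuously on $x_0$ and never crossing (two characteristics issued from distinct points cannot meet, by forward and backward uniqueness). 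Since $A(t,-\pi)=0$ and, by the mean-free condition \eqref{bc}, $A(t,\pi)=\int_{-\pi}^\pi a=0$, both endpoints are fixed points of the flow; hence for each $t$ the map $x_0\mapsto \Phi(t,x_0)$ is a continuous injection of $[-\pi,\pi]$ into itself fixing the endpoints, so it is an increasing bijection of $[-\pi,\pi]$. Consequently $\max_{x\in[-\pi,\pi]} a(t,x)=\max_{x_0\in[-\pi,\pi]} a(t,\Phi(t,x_0))$.

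First I would compute the evolution of $a$ along a characteristic. Setting $v(t)=a(t,\Phi(t,x_0))$ and using the chain rule together with \eqref{eq-main} gives
\begin{equation*}
\dot v = \big(\partial_t a + A\,\partial_x a\big)\big(t,\Phi\big) = v^2 - C(t), \qquad C(t):=\frac{1}{\pi}\int_{-\pi}^\pi a^2(t,x)\,dx.
\end{equation*}
The crucial structural point is that $C(t)$ is independent of the particular characteristic. Therefore, given two starting points $x_0,\tilde x_0$ with associated values $v(t)$ and $\tilde v(t)$, the difference $w=v-\tilde v$ solves the linear homogeneous ODE $\dot w=(v+\tilde v)\,w$, whence
\begin{equation*}
w(t)=w(0)\,\exp\Big(\int_0^t (v+\tilde v)(s)\,ds\Big).
\end{equation*}
Since $a\in C^1([0,T]\times[-\pi,\pi])$ is bounded, the exponential factor is finite and positive, so the sign of $w(t)$ coincides with that of $w(0)$ for every $t\in[0,T]$: the ordering of values is preserved along characteristics.

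To conclude, I would take $\tilde x_0=x_0^*$. Since $a(0,\cdot)$ attains its maximum at $x_0^*$, we have $w(0)=a(0,x_0)-a(0,x_0^*)\leq 0$ for every $x_0$, and hence $w(t)\leq 0$, i.e. $a(t,\Phi(t,x_0))\leq a(t,\Phi(t,x_0^*))$ for all $x_0$ and all $t$. By the surjectivity of $x_0\mapsto \Phi(t,x_0)$ established above, this shows that $a(t,\cdot)$ attains its maximum at $x^*(t)=\Phi(t,x_0^*)$, which is precisely the characteristic issued from $x_0^*$. The main point requiring care is the structural observation that the nonlocal term contributes the same $x$-independent quantity $C(t)$ to every characteristic, which is what linearizes the equation for the difference $w$ and yields sign preservation; the only other subtlety is verifying, via the boundary values $A(t,\pm\pi)=0$, that the characteristic flow is a bijection of $[-\pi,\pi]$, so that maximizing over $x$ and over characteristics coincide.
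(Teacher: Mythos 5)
Your proof is correct and follows essentially the same approach as the paper: both arguments rest on the observation that the nonlocal term $\frac{1}{\pi}\int_{-\pi}^{\pi}a^2\,dx$ is independent of $x$, so it cancels when comparing the value at the distinguished characteristic with the value elsewhere, reducing the problem to a sign-preservation (comparison) argument. The only cosmetic difference is that you phrase the comparison in Lagrangian form (two characteristics, with the extra check that the flow is a bijection of $[-\pi,\pi]$ fixing the endpoints), whereas the paper phrases it in Eulerian form via the comparison principle for the linear transport equation satisfied by $a^*(t)-a(t,x)$.
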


\begin{proof}
Let $a^*(t)=a(t,x^*(t))$, which satisfies $\frac{d}{dt} a^*(t)=(a^*(t))^2-\frac{1}{\pi}\int_{-\pi}^\pi a^2(t,x)dx$ and $a^*(0)=\max_{[-\pi,\pi]}a(0,\cdot)$. Then the function $\tilde a=a^*(t)-a(t,x)$ solves the linear transport equation
\begin{equation} \label{propagation-maximum-equation-difference}
\partial_t \tilde a+b\partial_x \tilde a+c\tilde a =0
\end{equation}
where $b(t,x)=\int_{-\pi}^x a(t,\bar x)d\bar x$ and $c(t,x)=-(a(t,x)+a^*(t))$. Since initially $\tilde a(0,x)\geq 0$ for all $x\in[-\pi,\pi]$, this remains so, i.e. $ a^*(t)\geq a(t,x)$ for all $x\in[-\pi,\pi]$ and $t\in [0,T]$, due to the comparison principle with the $0$ solution for Equation \eqref{propagation-maximum-equation-difference}.
\end{proof}

Our second result is that the zero set of the derivative $\partial_x a$ is transported along characteristics.

\begin{lem} \label{lem:propagation-zero-set-derivatives}

Assume that $a\in C^1([0,T]\times[-\pi,\pi])$ solves \eqref{eq-main}. Assume that $\partial_x a_0(x^*_0)=0$ at some point $x_0^*$. Then for all $t\in [0,T]$, we have $\partial_x a(t,x^*(t))=0$ at the characteristics $x^*$ starting from $x^*_0$.

\end{lem}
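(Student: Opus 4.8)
The plan is to show that $w:=\partial_x a$ solves a \emph{linear} transport equation driven by the very same velocity that defines the characteristics, and then to reduce the claim to a scalar linear ODE with vanishing initial datum.

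First I would differentiate Equation \eqref{eq-main} in $x$. Writing $b(t,x)=\int_{-\pi}^x a(t,\bar x)\,d\bar x$ as in the proof of Lemma \ref{lem:propagation-maximum}, and using $\partial_x b=a$, the crucial structural observation is that the averaged nonlocal term $\frac{1}{\pi}\int_{-\pi}^\pi a^2\,dx$ is independent of $x$ and hence disappears upon differentiation. The surviving terms give
$$
\partial_t w + b\,\partial_x w + a\,w - 2a\,w = 0, \qquad \text{i.e.}\qquad \partial_t w + b\,\partial_x w = a\,w .
$$
Thus $w=\partial_x a$ is transported with speed $b$, precisely the speed appearing in the definition of the characteristic $x^*$, and the only zero-order coefficient is the (locally bounded) factor $a$.

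Next I would evaluate $w$ along the characteristic. Setting $W(t):=w(t,x^*(t))=\partial_x a(t,x^*(t))$ and using $\tfrac{d}{dt}x^*(t)=b(t,x^*(t))$, the chain rule together with the transport equation yields
$$
\frac{d}{dt}W(t)=\big(\partial_t w+b\,\partial_x w\big)(t,x^*(t))=a(t,x^*(t))\,W(t).
$$
This is a scalar linear ODE. Since $W(0)=\partial_x a_0(x_0^*)=0$ by hypothesis, uniqueness for linear ODEs — equivalently the explicit formula $W(t)=W(0)\exp\big(\int_0^t a(s,x^*(s))\,ds\big)$ — forces $W(t)\equiv 0$, which is exactly the assertion. (This is the analogue, for the derivative rather than the solution itself, of the comparison argument used for Lemma \ref{lem:propagation-maximum}.)

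The one delicate point is regularity: deriving the transport equation for $w=\partial_x a$ formally requires $a\in C^2$, whereas the statement assumes only $a\in C^1$. In the setting where this lemma is applied the solutions are more regular ($C^3$ for the data of Theorem \ref{Th-main}), so the computation is fully justified. For merely $C^1$ solutions one would argue by approximation, or work in Lagrangian coordinates $X(t,\alpha)$ with $\dot X=b(t,X)$ and $X(0,\alpha)=\alpha$: there $g(t,\alpha):=a(t,X(t,\alpha))$ satisfies $\partial_t g=g^2-\tfrac1\pi\int_{-\pi}^\pi a^2\,dx$, so that $V:=\partial_\alpha g$ obeys $\partial_t V=2g\,V$ with $V(0,\alpha)=\partial_x a_0(\alpha)$, and since $\partial_\alpha X>0$ one can transfer the vanishing of $V(t,x_0^*)$ back to $\partial_x a(t,x^*(t))=0$. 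I expect this regularity bookkeeping, rather than the (elementary) algebra, to be the main obstacle.
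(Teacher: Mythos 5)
Your proof is correct and follows essentially the same route as the paper: differentiate \eqref{eq-main} in $x$ (the averaged nonlocal term drops out), obtain the linear transport equation $\partial_t w + b\,\partial_x w = a\,w$ for $w=\partial_x a$, and integrate the resulting scalar linear ODE along the characteristic to conclude $W\equiv 0$ from $W(0)=0$. Your observation that the classical differentiation requires $C^2$ rather than the stated $C^1$ is a fair point the paper glosses over, and the Lagrangian reformulation you sketch is a valid way to close that gap.
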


\begin{proof}

Differentiating \eqref{eq-main}, we obtain that $\partial_x a$ solves the transport equation
$$
\partial_t (\partial_x a)+  \left(\int_{-\pi}^x a(t, \bar x) \,d\bar x\right) \partial_x (\partial_x a) - a (\partial_x a ) =0, \quad x\in\Omega.
$$
Solving this equation along characteristics, we obtain that if $\partial_x a(x^*_0)=0$ then $\partial_x a(x^*(t))=0$ for all times. This is the desired result.
\end{proof}

Our third result is that the values of the second order derivative $\partial_x^2 a$ are conserved along characteristics in the zero set of $\partial_x a$.

\begin{lem} \label{lem:conservation-second-derivative-zero-set-derivatives}

Assume that $a\in C^2([0,T]\times[-\pi,\pi])$ solves \eqref{eq-main}. Assume that $\partial_x a_0(x^*_0)=0$ at some point $x_0^*$. Then for all $t\in [0,T]$, we have $\partial_x^2 a(t,x^*(t))=\partial_x^2 a_0(x^*_0)$ at the characteristics $x^*$ starting from $x^*_0$.

\end{lem}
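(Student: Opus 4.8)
The plan is to follow the same differentiate-and-transport strategy used in the two preceding lemmas, but now carried to second order. First I would set $b(t,x)=\int_{-\pi}^x a(t,\bar x)\,d\bar x$ so that $\partial_x b = a$, and recall from the proof of Lemma \ref{lem:propagation-zero-set-derivatives} that $w:=\partial_x a$ satisfies the transport equation $\partial_t w + b\,\partial_x w - a\,w = 0$. Differentiating this identity once more in $x$, and using $\partial_x b = a$ together with $\partial_x w = \partial_x^2 a$, I would obtain an evolution equation for $v:=\partial_x^2 a$.

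The key algebraic observation is that the two zeroth-order (potential) contributions cancel. Indeed, differentiating $b\,\partial_x w$ gives $(\partial_x b)\,\partial_x w + b\,\partial_x^2 w = a v + b\,\partial_x v$, while differentiating $-a\,w$ gives $-(\partial_x a)\,w - a\,\partial_x w = -w^2 - a v$; the two $a v$ terms add to zero. This leaves the clean equation
\begin{equation*}
\partial_t v + b\,\partial_x v = w^2,
\end{equation*}
in which the only source term is the square of $w=\partial_x a$.

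Next I would read this equation along the characteristic $x^*(t)$ defined by $\frac{d}{dt} x^* = b(t,x^*)$ with $x^*(0)=x^*_0$. Along it the material derivative gives
\begin{equation*}
\frac{d}{dt}\big[\partial_x^2 a(t,x^*(t))\big] = \big(\partial_t v + b\,\partial_x v\big)(t,x^*(t)) = \big(\partial_x a(t,x^*(t))\big)^2 .
\end{equation*}
Here I would invoke Lemma \ref{lem:propagation-zero-set-derivatives}: since $\partial_x a_0(x^*_0)=0$, the zero set of $\partial_x a$ is transported, so $\partial_x a(t,x^*(t))=0$ for all $t\in[0,T]$. Hence the right-hand side vanishes identically, so $\frac{d}{dt}\big[\partial_x^2 a(t,x^*(t))\big]=0$, and integrating in time yields $\partial_x^2 a(t,x^*(t))=\partial_x^2 a_0(x^*_0)$, as claimed.

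I do not anticipate any genuine obstacle: the argument is a routine second differentiation combined with the method of characteristics. The only point requiring care is the bookkeeping of the double differentiation of the nonlocal transport term and the verification that the $a v$ terms cancel exactly, so that the forcing reduces to $w^2$; it is precisely this structure that makes the conservation possible once $w$ is known to vanish on the characteristic. One should also note that the $C^2$ regularity assumption is exactly what justifies differentiating the equation twice and making sense of $\partial_x^2 a$ along the flow.
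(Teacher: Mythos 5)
Your proposal is correct and follows essentially the same route as the paper: differentiate \eqref{eq-main} twice to obtain $\partial_t(\partial_x^2 a)+\bigl(\int_{-\pi}^x a\,d\bar x\bigr)\partial_x(\partial_x^2 a)=(\partial_x a)^2$, then conclude along the characteristic using Lemma \ref{lem:propagation-zero-set-derivatives}. The explicit cancellation of the two $a\,\partial_x^2 a$ terms that you record is exactly the computation implicit in the paper's proof.
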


\begin{proof}

Differentiating twice \eqref{eq-main}, we obtain that $\partial_x^2 a$ solves the transport equation
$$
\partial_t (\partial_x^2 a)+  \left(\int_{-\pi}^x a(t, \bar x) \,d\bar x\right) \partial_x (\partial_x^2 a) - (\partial_x a)^2 =0, \quad x\in\Omega.
$$
Hence $\partial_t(\partial_x^2 a(t,x^*(t)))= (\partial_x a(t, x^*(t)))^2$ along the characteristics. Since by Lemma \ref{lem:propagation-zero-set-derivatives} the set $\{\partial_x a=0\}$ is transported along characteristics, we obtain $\partial_t(\partial_x^2 a(t,x^*(t)))=0$ on characteristics starting in $\{\partial_x a_0=0\}$. This implies the desired result.
\end{proof}

As an immediate corollary of Lemmas \ref{lem:propagation-maximum}, \ref{lem:propagation-zero-set-derivatives} and \ref{lem:conservation-second-derivative-zero-set-derivatives} we obtain

\begin{cor} \label{cor:propagation-maxima}

Assume that $a\in C^2([0,T],[-\pi,\pi])$ solves \eqref{eq-main}. Assume that $ a_0$ attains its maximum at an inner point $x^*_0$. Then for all $t\in [0,T]$, the function $x\mapsto a(t,x)$ attains its maximum at the characteristics $x^*$ starting from $x^*_0$. We have in addition $\partial_x a(t,x^*(t))=0$ and $\partial_x^2 a(t,x^*(t))=\partial_x^2 a_0(x^*_0)$ for all $t\in [0,T]$.

\end{cor}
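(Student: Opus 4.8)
The plan is to observe that the three hypotheses needed to invoke Lemmas \ref{lem:propagation-maximum}, \ref{lem:propagation-zero-set-derivatives} and \ref{lem:conservation-second-derivative-zero-set-derivatives} all follow at once from the single assumption that $a_0$ attains its maximum at an \emph{inner} point $x_0^*$. The crucial observation is that, since $x_0^*\in(-\pi,\pi)$ and $a_0\in C^2$, the first-order optimality condition for an interior extremum yields $\partial_x a_0(x_0^*)=0$. This is the bridge that connects the maximum-propagation statement to the two derivative statements, because the latter two lemmas are precisely conditioned on $\partial_x a_0$ vanishing at the starting point of the characteristic.

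First I would apply Lemma \ref{lem:propagation-maximum}, whose hypothesis is exactly that $x\mapsto a(0,x)$ attains its maximum at $x_0^*$; this gives that for each $t\in[0,T]$ the map $x\mapsto a(t,x)$ attains its maximum at the characteristic $x^*(t)$ emanating from $x_0^*$. Next, using the interior optimality condition $\partial_x a_0(x_0^*)=0$, I would invoke Lemma \ref{lem:propagation-zero-set-derivatives} along the very same characteristic $x^*$, which yields $\partial_x a(t,x^*(t))=0$ for all $t\in[0,T]$. Finally, again using $\partial_x a_0(x_0^*)=0$, Lemma \ref{lem:conservation-second-derivative-zero-set-derivatives} applies along $x^*$ and gives the conservation identity $\partial_x^2 a(t,x^*(t))=\partial_x^2 a_0(x_0^*)$ for all $t\in[0,T]$.

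The one point to check for consistency is that all three lemmas refer to the \emph{same} characteristic curve $x^*$; this is automatic, since the characteristic is uniquely determined by its starting point through the ODE $\tfrac{d}{dt}x^*(t)=\int_{-\pi}^{x^*(t)}a(t,x)\,dx$ with $x^*(0)=x_0^*$, and all three lemmas are stated for the characteristic issued from $x_0^*$. There is essentially no genuine obstacle here: the corollary is a direct concatenation of the three lemmas, the only (trivial) additional input being the interior optimality condition $\partial_x a_0(x_0^*)=0$ that converts the interior-maximum hypothesis into the vanishing-derivative hypothesis required by the second and third lemmas.
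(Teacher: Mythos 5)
Your proof is correct and matches the paper's approach: the paper states this corollary as an immediate consequence of Lemmas \ref{lem:propagation-maximum}, \ref{lem:propagation-zero-set-derivatives} and \ref{lem:conservation-second-derivative-zero-set-derivatives}, with the interior first-order condition $\partial_x a_0(x_0^*)=0$ being the only glue needed, exactly as you describe.
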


 %%%%%%%%%%%%%%%%%%%%%%%%%%%%%%%%%%%%%%%%%%%%%%%%%%%%%%%%

\section{Weighted linear damping}\label{sec3}
This section is devoted to exponential decay-in-time estimates for some evolution equations related to the linearized dynamics. The linearized operator around $\cos(x)$ is
\begin{equation} \label{id:mathcalL0}
\mathcal{L}_0(\veps)= 2\cos( x) \,\veps -\sin ( x)\,\partial_x \veps +\sin( x) \int_{-\pi}^x \veps(t, \bar x) \,d\bar x - \frac{2}{\pi}  \int_{-\pi}^{\pi}\cos ( x)\, \veps(t,   x)\,dx.
\end{equation}

We will show decay-in-time estimates for a related operator that can be obtained from $\mathcal L_0$ after certain transformations,  {which are performed for the full nonlinear problem, in Subsection \ref{subs4.3}}.     %\cpcomment{I changed `Subsection \ref{subs4.3}' to `Subsection \ref{subsec.quasilin}'. CC: no, Subsection \ref{subs4.3} was correct, so I changed back to it and added "which are performed for the full nonlinear problem" to be more precise.}   The linear problem  \eqref{eq:quasi:L} that is derived from problem  \eqref{eq-vareps}.  
This is the key to the proof of our main theorem.

\subsection{A reduced linear problem}
As a warmup, we shall first consider the following linearized operator,
$$
L(\veps)=2 \cos (x) \,\veps-\sin (x)\, \partial_x \veps + \sin(x)\, \int_0^x \veps(t, \bar x)\,d\bar x,
$$
obtained from \eqref{id:mathcalL0} by discarding the constant term and changing the integration domain in the nonlocal term. We prove time-decay  estimates for the associated evolution equation on a moving domain:
\begin{equation}\label{eq-L}
\partial_t \veps = L(\veps)~~ {\rm on}~~\Omega^*(t),  \quad \veps|_{t=0}= \veps_0~~~~ {\rm on}~~\Omega^*_0,
\end{equation}
where $\Omega^*(t)$ is the image of a  given initial domain $\Omega^*_0=[-\pi-x^*_0, \pi-x^*_0]$  by the characteristic flow ${X(\cdot,  z)}$ that is associated to  Equation \eqref{eq-L}:
\begin{equation}\label{Flow1}
\frac{d X(t, z)}{dt}=\sin(X(t, z)), \quad X(0, z)=z\in \Omega^*_0.
\end{equation}
We consider $|x^*_0|<\delta^*$  with $\delta^*>0$ small to be specified later. The flow defined by \eqref{Flow1} is explicit, and it is a diffeomorphism from $\Omega^*_0$ to $\Omega^*(t)=[-\pi-x^*(t), \pi-x^*(t)]$ with
\begin{equation}\label{def-x*-S3}
\frac{d x^*(t)}{dt}=-\sin(x^*(t))\quad{\rm i.e.}~~x^*(t)=2\arctan\left(    e^{-t}\, \tan\left(\frac{x^*_0}{2}\right)\right).
\end{equation}
Indeed,   it is easy to check that $\pm\pi-x^*(t)$   satisfy \eqref{Flow1} and $\pm\pi-x^*(t)|_{t=0}=\pm\pi-x^*_0.$

\begin{prop}\label{Prop2.1}
 For any   constant $\theta\in (0, 1),$   there exist a  positive small constant  $\delta^*$  and   a nonnegative $2\pi$-periodic $W^{1, \infty}([-\pi, \pi])$  function  $W_\theta(x)$ satisfying $W_\theta(x)=O(|x|^3)$ as $x\to 0$ and $W_\theta(x)>0$ for $x\neq 0$, such that the following holds.  Let $\veps_0\in C^{3}(\Omega^*_0)$ such that $\veps_0(x)=O(|x|^3)$ as $x\to 0$, then the unique global solution of problem \eqref{eq-L} (noted by $e^{tL}\veps_0$)  satisfies
\begin{equation}\label{eq-2.00}
\left\|\frac{e^{tL} \veps_0}{W_\theta }\right\|_{L^\infty{(\Omega^*(t))}}\leq  e^{-(1-\theta)t}~
 \left\|\frac{\veps_0}{W_\theta }\right\|_{L^\infty{(\Omega^*_0)}}.
\end{equation}
\end{prop}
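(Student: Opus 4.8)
The plan is to pass to the weighted unknown $v=\veps/W_\theta$ and to reduce the entire statement to a single scalar differential inequality along the characteristic flow \eqref{Flow1}. Writing $\veps=W_\theta v$ and noting that $\partial_t\veps+\sin(x)\partial_x\veps$ is precisely the material derivative along \eqref{Flow1}, a direct computation gives, along each characteristic $t\mapsto X(t,z)$,
\begin{equation*}
\frac{d}{dt}\,v(t,X(t,z))=p(X)\,v(t,X(t,z))+\frac{\sin(X)}{W_\theta(X)}\int_0^{X}W_\theta(\bar x)\,v(t,\bar x)\,d\bar x,
\end{equation*}
where $p(x)=2\cos(x)-\sin(x)\,(\log W_\theta)'(x)$. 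The first term is the local part coming from $L_0$ and the second is the nonlocal term; since $W_\theta\geq0$, the latter is bounded in absolute value by $q(x)\,\|v(t)\|_{L^\infty}$ with $q(x)=\frac{|\sin(x)|}{W_\theta(x)}\int_0^{|x|}W_\theta$. Everything is thus governed by the scalar quantity $p+q$.

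For the purely local operator $L_0$ (the warmup, with the nonlocal term discarded) this already closes: if $W_\theta$ is any admissible weight with $p(x)\le-(1-\theta)$, each characteristic ODE is scalar and $|v(t,X(t,z))|\le e^{\int_0^t p(X(s,z))\,ds}|v_0(z)|\le e^{-(1-\theta)t}|v_0(z)|$, and taking the supremum over $z$ — using that $X(t,\cdot)$ is a diffeomorphism onto $\Omega^*(t)$ — yields the weighted bound. For the full operator $L$ the characteristic ODEs are coupled through the integral term, and I would close the estimate by a comparison principle for $M(t)=\|v(t)\|_{L^\infty(\Omega^*(t))}$. The supremum is attained on the compact set $\Omega^*(t)$: the ratio $v$ extends continuously across the interior fixed point $x=0$ of \eqref{Flow1} because both $\veps_0$ and $W_\theta$ vanish to third order there, and this vanishing is propagated in time (at $x=0$ the equation decouples and forces $\veps,\partial_x\veps,\partial_x^2\veps$ to stay zero). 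Differentiating $M$ and evaluating at a point realizing the maximum gives $\frac{d}{dt}|v|\le(p+q)\,M\le-(1-\theta)M$, hence $D^+M(t)\le-(1-\theta)M(t)$ and Gronwall produces \eqref{eq-2.00}.

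The heart of the argument, and the step I expect to be the main obstacle, is the construction of an admissible weight $W_\theta$ — $2\pi$-periodic, $W^{1,\infty}$, positive away from $0$ and cubic at $0$ — for which $p+q\le-(1-\theta)$ holds a.e.\ on a neighbourhood $[-\pi-\delta^*,\pi+\delta^*]$ of $[-\pi,\pi]$, the enlargement accounting for the moving domain $\Omega^*(t)$. Near $x=0$ the cubic normalization is exactly what is needed: it forces $\sin(x)(\log W_\theta)'(x)\to3$, so $p(x)\to2-3=-1$ while $q(x)=O(|x|^2)\to0$, giving $p+q\to-1<-(1-\theta)$ with room to spare; this also explains the hypotheses $\veps_0(x)=O(|x|^3)$ and $W_\theta(x)=O(|x|^3)$. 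Near $x=\pm\pi$ the transport coefficient $\sin(x)$ vanishes while $2\cos(x)\to-2$, so $p+q\to-2$ automatically, which is also why a corner of $W_\theta$ at $\pm\pi$ (hence only $W^{1,\infty}$ regularity) is harmless. The delicate region is the interior $\eta\le|x|\le\pi-\eta$, where $2\cos(x)$ is not negative enough and one must take $(\log W_\theta)'$ large so that the damping $-\sin(x)(\log W_\theta)'$ dominates both $2\cos(x)$ and the nonlocal loss $q$. The tension is that a steeper $W_\theta$ enlarges $\int_0^{|x|}W_\theta$ and hence $q$; I would resolve it by taking $W_\theta$ even and monotone on $(0,\pi)$, so that $\int_0^{|x|}W_\theta\le|x|\,W_\theta(x)$ and therefore $q(x)\le|x\sin(x)|$, after which it suffices to impose the explicit lower bound $(\log W_\theta)'(x)\ge\frac{2\cos(x)+(1-\theta)}{\sin(x)}+x$ on $(0,\pi)$, which is compatible with the cubic behaviour at $0$ (there the right-hand side is $\sim(3-\theta)/x<3/x$) and leaves $(\log W_\theta)'$ essentially free near $\pi$ so that $W_\theta$ can be closed up periodically. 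Finally $\delta^*$ is fixed small enough that the strict inequality on $[-\pi,\pi]$ persists on the enlarged interval, and local well-posedness together with the a priori bound \eqref{eq-2.00} furnishes the global solution $e^{tL}\veps_0$.
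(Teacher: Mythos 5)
Your argument is correct, and it rests on the same two mechanisms as the paper's proof---a weighted comparison along the characteristics of $\dot X=\sin X$, with a weight that vanishes cubically at the origin (so the transport term converts the $+2\cos$ amplification into a net $-1$ damping) and grows away from the origin fast enough to absorb the nonlocal term---but the execution is genuinely different. The paper first exhibits an exact eigenweight $W_{-1}=|\sin x|\sin^2(x/2)$ (capped near $\pm\pi$) satisfying $L_0W_{-1}=-W_{-1}$, so that $e^{-t}W_{-1}\widetilde W$ is a supersolution for any $\widetilde W$ with $\sin x\,\widetilde W'\ge 0$ (Lemma \ref{Le-2.2}); it then shows separately that for $\widetilde W_\theta=e^{C|x|/\theta}$ the nonlocal term is a $\theta$-contraction in the weighted norm (Lemma \ref{Le-2.3}); and it combines the two via Duhamel and Gr\"onwall, which is where the loss from $e^{-t}$ to $e^{-(1-\theta)t}$ occurs. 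You instead fold everything into the single pointwise condition $p+q\le -(1-\theta)$ and close with a Dini-derivative estimate on $M(t)=\|v(t)\|_{L^\infty(\Omega^*(t))}$; your bound $q\le |x\sin x|$ from monotonicity of the weight, cancelled against the extra $-x\sin x$ built into $(\log W_\theta)'$, plays the role of the paper's uniform bound $q\le\theta$. Both routes work. Yours avoids Duhamel but needs the extra (correct) observation that the third-order vanishing of $\veps$ at $x=0$ propagates, so that $M(t)$ stays finite---a point the paper's supersolution formulation sidesteps, since $|\veps|\le Ce^{-t}W_\theta$ is obtained there without assuming a priori boundedness of the ratio at positive times. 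If you write this up, make two small points explicit: the chain rule for $t\mapsto W_\theta(X(t,z))$ holds only a.e.\ since $W_\theta$ is merely $W^{1,\infty}$ (harmless, as you only use the integrated inequality), and the bound $\int_0^{|x|}W_\theta\le |x|\,W_\theta(x)$ fails on the overshoot region $\pi\le |x|\le \pi+\delta^*$ where the periodic extension wraps around, so there one must instead use $|\sin x|\lesssim\delta^*$ together with $W_\theta\ge c>0$ near $\pm\pi$---which is precisely the continuity argument you invoke when fixing $\delta^*$.
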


 {Let us emphasize that the two important properties of the weight $W_\theta$ constructed in the proof of Proposition \ref{Prop2.1} are the fact that $W_\theta$ only vanishes at $0$ and that it vanishes at order $O(|x|^3)$.}

To prove Proposition \ref{Prop2.1}, let us first focus on the problem without the nonlocal term $\sin x\, \int_0^x \veps(t, \bar x)\,d\bar x:$ 
\begin{equation}\label{eq-L0}
\partial_t \veps = 2 \cos x \,\veps-\sin x\, \partial_x \veps=L_0 (\veps), \quad \veps(0, x)=\veps_0,\quad {\rm for}~~x\in \Omega^*(t).
 \end{equation}
For this toy model case, we have an explicit solution by the method of characteristics, as well as a comparison principle, which shows that the long-time behavior of the solution is quantitatively determined by the behavior of the initial data near the origin. 
 
 \begin{lem}\label{Le-2.2}
 Define $W_{-1}$ as a $2\pi$-periodic function given on $[-\pi, \pi]$ by 
\begin{equation}
{W}_{-1} (x)=\left\{\begin{aligned}
 &|\sin(x) |\sin^2(x/2)\qquad |x|\leq \frac{2\pi}{3},\\
 &\frac{3\sqrt{3}}{8}\qquad\qquad\qquad \quad  x\in [-\pi, - {2\pi}/{3}]\cup [{2\pi}/{3},  \pi].
 \end{aligned}\right.
\end{equation}
Then $W_{-1}\in C^1(\mathbb R)$. For any positive  $W^{1, \infty}(\R)$  function $\widetilde W$ satisfying $\sin x ~{\widetilde W}'(x)\geq 0$ on $\R,$ the 
following holds for $\delta^*>0$ small enough. Let $\veps_0\in  C^{3}(\Omega^*_0)$ such that $\veps_0(x)=O(|x|^{3})$ as $x\to 0$, then $e^{tL0}\veps_0$ the unique global solution of problem \eqref{eq-L0}, satisfies 
\begin{equation}\label{Le2.2-2}
\left\|\frac{e^{tL_0} \veps_0}{W_{-1} \widetilde W}\right\|_{L^\infty{(\Omega^*(t))}}\leq  e^{-t}~
 \left\|\frac{\veps_0}{W_{-1}\widetilde W}\right\|_{L^\infty{(\Omega^*_0)}},\end{equation}
  \end{lem}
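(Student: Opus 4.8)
The plan is to reduce the weighted estimate to a pointwise sign condition on a damping coefficient, which can then be exploited either by the method of characteristics (the flow \eqref{Flow1} is explicit) or, equivalently, by a comparison principle on the moving domain. Set $w=W_{-1}\widetilde W$ and introduce the rescaled unknown $g=\veps/w$. A direct computation shows that if $\veps$ solves \eqref{eq-L0}, then $g$ solves the transport equation
\begin{equation*}
\partial_t g + \sin x\, \partial_x g = c(x)\, g, \qquad c(x) = 2\cos x - \sin x\, \frac{w'(x)}{w(x)}.
\end{equation*}
The whole estimate \eqref{Le2.2-2} follows once $c(x)\le -1$ is established on the relevant range of $x$: indeed, along each characteristic $X(t,z)$ one then has $\frac{d}{dt} g(t,X(t,z)) = c(X(t,z))\,g(t,X(t,z))$, whence $|g(t,X(t,z))|\le e^{-t}|g(0,z)|$, and since the flow is a diffeomorphism of $\Omega^*_0$ onto $\Omega^*(t)$ this is exactly \eqref{Le2.2-2}. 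Equivalently, with $M=\|\veps_0/w\|_{L^\infty(\Omega^*_0)}$ the functions $\pm M e^{-t}w(x)$ are super- and sub-solutions of \eqref{eq-L0}; since the endpoints $\pm\pi-x^*(t)$ are themselves characteristics, no boundary condition is required and the comparison principle applies directly.

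The heart of the matter is therefore the sign of $c$, and this is where the precise shape of $W_{-1}$ enters. Writing $\frac{w'}{w}=\frac{W_{-1}'}{W_{-1}}+\frac{\widetilde W'}{\widetilde W}$, the hypothesis $\sin x\,\widetilde W'\ge 0$ gives $\sin x\,\frac{\widetilde W'}{\widetilde W}\ge 0$, so it suffices to control the $W_{-1}$ contribution. On the central region $|x|\le 2\pi/3$, where $W_{-1}=|\sin x|\sin^2(x/2)$, a short computation using $\sin x\,\cot(x/2)=1+\cos x$ yields the clean identity
\begin{equation*}
\sin x\, \frac{W_{-1}'(x)}{W_{-1}(x)} = 2\cos x + 1,
\end{equation*}
so that $c(x)=-1-\sin x\,\frac{\widetilde W'}{\widetilde W}\le -1$ there. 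On the outer region $2\pi/3\le |x|\le \pi$ the weight $W_{-1}$ is constant, hence $W_{-1}'=0$ and $c(x)=2\cos x-\sin x\,\frac{\widetilde W'}{\widetilde W}$; since $\cos x\le -1/2$ on this region we have $2\cos x\le -1$ and again $c(x)\le -1$. Taking $\delta^*$ small guarantees that the endpoints $\pm\pi-x^*(t)$ stay inside the region where $W_{-1}$ is constant and $2\cos x+1\le 0$ (by \eqref{def-x*-S3}, $|x^*(t)|\le|x^*_0|<\delta^*$), so the two cases cover all of $\Omega^*(t)$.

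The regularity claim $W_{-1}\in C^1(\mathbb R)$ is checked by matching at the gluing points: continuity at $x=2\pi/3$ is immediate, and the value $2\pi/3$ is chosen precisely so that $2\cos x+1$ vanishes there, which by the identity above forces $W_{-1}'(2\pi/3^-)=0=W_{-1}'(2\pi/3^+)$; at $x=0$ the even function $W_{-1}=O(|x|^3)$ has vanishing derivative from both sides, and near $x=\pi$ the periodic extension is locally constant. Finally, I would stress why the hypothesis $\veps_0=O(|x|^3)$ is indispensable: the origin is a fixed point of the flow \eqref{Flow1}, where the damping degenerates and the pointwise dynamics reads $\frac{d}{dt}\veps(t,0)=2\veps(t,0)$, an unstable mode; requiring $\veps_0(x)=O(|x|^3)$ (matched by $W_{-1}(x)=O(|x|^3)$) both annihilates this growing mode, forcing $\veps(t,0)=0$, and ensures finiteness of the weighted norm near $0$. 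The main obstacle is thus not the transport estimate itself but the verification that the weight makes $c\le -1$ \emph{uniformly} across the degenerate interior point $0$, the transition $2\pi/3$, and the boundary layer near $\pm\pi$; the interplay of the cancellation $\sin x\,\frac{W_{-1}'}{W_{-1}}=2\cos x+1$ with the sign of $2\cos x+1$ is exactly what makes these three regimes compatible.
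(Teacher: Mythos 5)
Your proof is correct and follows essentially the same route as the paper: the sign condition $c(x)\le -1$ you verify is exactly the paper's supersolution inequality $(\partial_t-L_0)(e^{-t}W_{-1}\widetilde W)\ge 0$ (indeed $(\partial_t-L_0)(e^{-t}w)=e^{-t}w(-1-c)$), your identity $\sin x\,W_{-1}'/W_{-1}=2\cos x+1$ is the paper's computation $L_0(W_{-1})=-W_{-1}$ on $[-2\pi/3,2\pi/3]$, and the conclusion is drawn by the same characteristics/comparison argument on the moving domain. Your additional remarks on the $C^1$ matching at $2\pi/3$ and on the role of the $O(|x|^3)$ vanishing at the degenerate fixed point are accurate but do not change the method.
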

  
 {Notice that one can of course take $\widetilde W=1$ in Lemma \ref{Le-2.2}. The factor $\widetilde W$ enables us to gain flexibility, which will be useful to handle the nonlocal terms appearing in the operator $\mathcal L_0$.}

  \begin{proof}
  The proof is based on the  comparison principle for the linear transport equation \eqref{eq-L0}.   In fact,   following the characteristics  it is not difficult to observe that Equation  \eqref{eq-L0} enjoys a comparison principle:  any $W^{1, \infty}$ supersolution  is nonnegative on $\Omega^*(t)$  if it is  nonnegative initially on $\Omega^*_0.$ 
  
   Given  $W_{-1}$ and  $\widetilde W$ as in above statement,   we claim that  $e^{-t}W_{-1}\widetilde W$ is a supersolution of \eqref{eq-L0},  i.e.
  \begin{equation}\label{eq-2.2} 
(\partial_t -L_0)(e^{-t}W_{-1}\widetilde W)\geq 0,\quad {\rm for} ~~ x\in  \Omega^*(t).
  \end{equation}
   In fact,  notice  that $W_{-1}'(\frac{2\pi}{3})=0$ ensures $W_{-1}$ is $C^1(\Omega^*(t))$. Moreover, on $[-\frac{2\pi}{3},\frac{2\pi}{3}]$ we have $L_0(W_{-1})=-W_{-1}$ by a direct computation. Hence $ (\partial_t -L_0)(e^{-t}W_{-1})=0$ for $|x|\leq \frac{2\pi}{3}$,  so we  do have
       \begin{equation*} (\partial_t -L_0)(e^{-t}W_{-1}\widetilde W)=    e^{-t}W_{-1}\sin (x)\widetilde W'\geq 0,\quad {\rm for} ~~|x|\leq \frac{2\pi}{3}.
 \end{equation*}
 
Now,    for fixed time $t,$  we are going to  show \eqref{eq-2.2} is valid  on the remaining domain  $x\in [-\pi-x^*(t), -\frac{2\pi}{3}]\cup [\frac{2\pi}{3}, \pi-x^*(t)].$   {Notice that one of the two intervals may be empty.}   
To do that,  we find that the origin is a fixed point of  \eqref{def-x*-S3} and it is stable:    $|x^*(t)|$ is small enough  under the assumption that  $|x^*_0|<\delta^*$ is small enough.
Our definiton of $W_{-1}$ then yields 
 \begin{equation*}
{W}_{-1} (x)=
\frac{3\sqrt{3}}{8}\quad {\rm for} ~~ x\in [-\pi-x^*(t), - {2\pi}/{3}]\cup [{2\pi}/{3}, \pi-x^*(t)].
\end{equation*}
This together with the inequality  $\cos(x)\leq -\frac{1}{2}$ on $[-\pi-x^*, -\frac{2\pi}{3}]\cup [\frac{2\pi}{3}, \pi-x^*]$ implies that 
 \begin{equation*} (\partial_t -L_0)(e^{-t}W_{-1}\widetilde W)=   \frac{3\sqrt{3}}{8}\widetilde We^{-t} \left(-1-2\cos(x)\right)   +\frac{3\sqrt{3}}{8}e^{-t} \sin(x)    \widetilde W'\geq 0,
 \end{equation*}
and there holds inequality \eqref{eq-2.2}.
\medskip

Defining  $\left\|\frac{\veps_0}{W_{-1}\widetilde W}\right\|_{L^\infty{(\Omega^*_0)}}=:C,$ we have by definition $-C W_{-1}\widetilde W\leq \veps_0\leq C W_{-1}\widetilde W$ on $\Omega^*_0.$  Thanks to \eqref{eq-2.2}, we see that $Ce^{-t}W_{-1}\widetilde W\pm e^{tL_0}\veps_0$  are  supersolutions  of Equation \eqref{eq-L0}  supplemented with nonnegative initial data on $\Omega^*_0.$  Therefore,  \eqref{Le2.2-2} is a consequence of the comparison principle.
 \end{proof}
 
 Next, we have the following lemma, which indicates that the nonlocal term in \eqref{eq-L} can be treated perturbatively.
 
 \begin{lem}\label{Le-2.3}
 Let $W_{-1}$ be defined as in Lemma \ref{Le-2.2}.     For any   constant $\theta\in(0, 1)$ and  any  $x^*  \in (-\delta^*, \delta^*),$    if   $\delta^*$ is small enough  then  there exists a positive $W^{1, \infty}(\mathbb R)$ function  $\widetilde{W}_\theta$ with $\sin x \widetilde{W}_\theta'\geq 0$ such that   the following estimate holds:    
\begin{equation}\label{lemeq-3.3}
\left\| \frac{\sin x \int_0^x f(\bar x)\,d\bar x}{W_{-1}\widetilde W_\theta}\right\|_{L^\infty( [-\pi-x^*, \pi-x^*])}\leq ~\theta ~\left\|\frac{f}{W_{-1}\widetilde W_\theta}\right\|_{L^\infty([-\pi-x^*, \pi-x^*])}.
 \end{equation}
 \end{lem}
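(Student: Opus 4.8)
The plan is to reduce the operator bound \eqref{lemeq-3.3} to a scalar pointwise estimate on the pure weight, and then to produce $\widetilde W_\theta$ by an explicit one-parameter ansatz tuned by a large constant. Write $g=W_{-1}\widetilde W_\theta$ and set $M=\big\|f/(W_{-1}\widetilde W_\theta)\big\|_{L^\infty}$, so that $|f(\bar x)|\le M\,g(\bar x)$ pointwise. Since $g\ge0$, this gives $\big|\int_0^x f\big|\le M\big|\int_0^x g\big|$, and dividing the quantity to be estimated by $g(x)$ reduces \eqref{lemeq-3.3} to showing that the scalar ratio
\begin{equation*}
R(x):=\frac{|\sin x|\,\big|\int_0^x g(\bar x)\,d\bar x\big|}{g(x)}
\end{equation*}
satisfies $R(x)\le\theta$ on $[-\pi-x^*,\pi-x^*]$. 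I would take $\widetilde W_\theta(x)=e^{\lambda(1-\cos x)}$ with a large $\lambda=\lambda(\theta)$ to be fixed. This function is smooth, $2\pi$-periodic, bounded between $1$ and $e^{2\lambda}$ (hence positive and in $W^{1,\infty}(\mathbb R)$), even, and satisfies $\sin x\,\widetilde W_\theta'(x)=\lambda\sin^2 x\,\widetilde W_\theta(x)\ge0$, as required. As $W_{-1}$ is even, $g$ is even and $R(-x)=R(x)$, so it suffices to bound $R$ for $x\ge0$.

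The heart of the matter is an integration by parts on the bulk interval $(0,\pi)$, where $\sin x>0$. Using $\widetilde W_\theta'=\lambda\sin x\,\widetilde W_\theta$ I can write $g=\tfrac1\lambda\,h\,\widetilde W_\theta'$ with $h(\bar x):=W_{-1}(\bar x)/\sin\bar x$, so that
\begin{equation*}
\int_0^x g=\frac1\lambda\Big[h(\bar x)\widetilde W_\theta(\bar x)\Big]_0^x-\frac1\lambda\int_0^x h'(\bar x)\widetilde W_\theta(\bar x)\,d\bar x .
\end{equation*}
The boundary term at $0$ vanishes because $h(\bar x)=\sin^2(\bar x/2)\to0$ there, while a direct check shows $h'\ge0$ on $(0,\pi)$: on $[0,2\pi/3]$ one has $h=\sin^2(\cdot/2)$, hence $h'=\tfrac12\sin\ge0$, and on $[2\pi/3,\pi)$ one has $h=\tfrac{3\sqrt3}{8}/\sin$, hence $h'=-\tfrac{3\sqrt3}{8}\cos/\sin^2\ge0$ since $\cos<0$ there. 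As $\widetilde W_\theta>0$, the remaining integral is nonnegative, whence $\int_0^x g\le\frac1\lambda\frac{W_{-1}(x)}{\sin x}\widetilde W_\theta(x)=\frac{g(x)}{\lambda\sin x}$. Multiplying by $\sin x$ yields $R(x)\le1/\lambda$ on $(0,\pi)$, and this extends to $x=\pi$ (indeed $R(\pi)=0$). Choosing $\lambda=1/\theta$ then gives $R\le\theta$ on all of $[0,\pi]$.

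The remaining, and I expect main, difficulty is the small overshoot region $(\pi,\pi-x^*)\subset(\pi,\pi+\delta^*)$ arising when $x^*<0$ (the symmetric overshoot beyond $-\pi$ is covered by the evenness of $R$). This is exactly where the identity above degenerates, since $\sin\pi=0$, so I would argue there by crude bounds instead. On this region $W_{-1}\equiv\tfrac{3\sqrt3}{8}$, so $g(x)=\tfrac{3\sqrt3}{8}e^{\lambda(1-\cos x)}\ge\tfrac{3\sqrt3}{8}e^{3\lambda/2}$ using $1-\cos x\ge\tfrac32$, while $\int_0^x g\le\tfrac{3\sqrt3}{8}\cdot2\pi\,e^{2\lambda}$ and $|\sin x|=|\sin(x-\pi)|\le|x-\pi|\le\delta^*$. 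Combining these gives $R(x)\le2\pi\,e^{\lambda/2}\,\delta^*$, which is $\le\theta$ once $\delta^*$ is small depending on $\lambda$, hence on $\theta$. Fixing $\lambda=1/\theta$ and then $\delta^*\le\min\{\pi/3,\ \theta/(2\pi e^{1/(2\theta)})\}$ therefore yields $R\le\theta$ on the whole moving interval $[-\pi-x^*,\pi-x^*]$ for every $|x^*|<\delta^*$, which is the claimed estimate.
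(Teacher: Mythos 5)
Your proof is correct, and it follows the same overall strategy as the paper — reduce \eqref{lemeq-3.3} to a pointwise bound on the pure-weight ratio $|\sin x|\int_0^x W_{-1}\widetilde W_\theta\,/\,(W_{-1}\widetilde W_\theta)$ and gain the factor $\theta$ by taking an exponential weight with a large rate constant — but the execution differs in two genuine ways. First, the weight: the paper takes the $2\pi$-periodic extension of $\exp(C|x|/\theta)$ (which has corners at $0$ and $\pm\pi$), whereas you take the smooth periodic weight $e^{\lambda(1-\cos x)}$; your choice is tailored so that $\widetilde W_\theta'=\lambda\sin x\,\widetilde W_\theta$, which turns the key estimate into a clean integration by parts using only $h'\ge 0$ for $h=W_{-1}/\sin$, rather than the paper's direct comparison $\int_0^x e^{C\bar x/\theta}d\bar x\le \frac{\theta}{C}e^{Cx/\theta}$ combined with monotonicity of $W_{-1}$. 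Second, the overshoot region $(\pi,\pi+\delta^*)$: you exploit the vanishing of $|\sin x|$ at $\pi$ (giving the factor $\delta^*$ directly), while the paper instead controls the drop of its weight past $\pi$ by $e^{C\delta^*/\theta}$; both need $\delta^*$ small depending on $\theta$. Your argument is complete as far as the statement of the lemma is concerned (your weight is positive, $W^{1,\infty}$, and satisfies $\sin x\,\widetilde W_\theta'\ge 0$, which is all that the later lemmas use qualitatively), though one should note that the quantitative constants entering the smallness conditions on $\delta^*$ in Lemma \ref{Le-quasi} would have to be recomputed for the weight $e^{\lambda(1-\cos y)}$ — there one gets $\widetilde W_\theta'=\lambda\sin y\,\widetilde W_\theta$ and hence $\frac{\theta'}{2}\widetilde W_\theta+(\int_0^y\eta)\widetilde W_\theta'\ge(\frac{\theta'}{2}-2\pi\delta^*\lambda)\widetilde W_\theta\ge 0$ for $\delta^*$ small, so the downstream argument survives unchanged in substance.
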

 \begin{proof}
For any $\theta \in(0, 1)$ and  $C>0$ (to be chosen later),   we define  $\widetilde{W}_\theta$ a 2$\pi-$periodic function whose definition on  $[-\pi, \pi]$ is given  by
 \begin{equation}
 \widetilde{W}_\theta (x) =
 \exp(C|x|/{\theta}),\quad |x|\leq \pi. 
 \end{equation}

 Define $v(x)=\frac{f(x)}{W_{-1}\widetilde{W}_{\theta}},$ since $W_{-1} \widetilde{W}_\theta \geq 0$  and both $W_{-1},~\widetilde{W}_\theta$ are even, we only need to discuss about the case when $x\in[0, \pi-x^*]$  and find that 
   \begin{align*}
\left|\frac{\sin x \int_0^x f(\bar x)\,d\bar x}{W_{-1}\widetilde W_\theta}\right|  \leq \|v\|_{L^\infty(\Omega^*)} \frac{ |\sin (x)| \int_0^x W_{-1} \widetilde{W}_\theta \,d\bar x}{W_{-1} \widetilde{W}_\theta}, \quad x\in [0, \pi-x^*].
\end{align*}

  At first, for $x\in[0,  \frac{2\pi}{3}]$ one has 
 \begin{align*}
\frac{ |\sin (x)| \int_0^x W_{-1} \widetilde{W}_\theta \,d\bar x}{W_{-1} \widetilde{W}_\theta}&\leq  \frac{ \int_0^x \sin^2(\bar x/2)\,\sin \bar x \exp(C\bar x/\theta)  \,d\bar x}{ \sin^2({x/2})\exp(C\theta^{-1} \sin^2(x/2))}  \\
  &\leq  \frac{  \int_0^{x} \exp(C \bar x/\theta)  \,d \,\bar x}{ \exp(Cx\theta)}  \\
&\leq \frac{\theta}{C}.
 \end{align*}
   
Now, we discuss the case  when  $0\leq x^*<\delta^*.$ Thanks to  the fact that $0\leq W_{-1}(x)\leq \frac{3\sqrt{3}}{8}$ for all $x\in\Omega^*$ and $W_{-1}\equiv  \frac{3\sqrt{3}}{8}$ for $x\geq \frac{2\pi}{3},$     we see that  when $x\in[\frac{2\pi}{3}, \pi-x^*],$ 
     \begin{align*}
    \frac{ |\sin (x)| \int_0^x W_{-1} \widetilde{W}_\theta \,d\bar x}{W_{-1} \widetilde{W}_\theta}\leq
\frac{\int_0^x  \exp(C\bar x/\theta)\,d\bar x}{\exp(C x/\theta)}\leq \frac{\theta}{C}.
 \end{align*}
As to the case $-\delta^*<x^*\leq0,$  similarly,  for $x\in[\frac{2\pi}{3}, \pi]$ we can write that
 \begin{align*}
    \frac{ |\sin (x)| \int_0^x W_{-1} \widetilde{W}_\theta \,d\bar x}{W_{-1} \widetilde{W}_\theta}\leq
\frac{\int_0^x  \exp(C\bar x/\theta)\,d\bar x}{\exp(C x/\theta)}\leq \frac{\theta}{C},
 \end{align*}
while for $x\in [\pi, \pi-x^*],$ one has 
  \begin{align*}
      \int_0^x W_{-1} \widetilde{W}_\theta \,d\bar x\leq  \frac{3\sqrt{3}}{8}\left(  \int_0^\pi\exp(C\bar x/\theta)\,d\bar x +  \int_\pi^x \exp(C(2\pi-\bar x)/\theta)\,d\bar x\right),
       \end{align*}
and thus
\begin{align*}
    \frac{ |\sin (x)| \int_0^x W_{-1} \widetilde{W}_\theta \,d\bar x}{W_{-1} \widetilde{W}_\theta}\leq&
 \frac{2\theta \exp(C\pi/\theta)}{C\exp(C(2\pi-x)/\theta)}\\
\leq& \frac{2\theta}{C}\exp(Cx^*/\theta)\leq \frac{6\theta}{C},
 \end{align*}
 provided that $\delta^*\leq \frac{\theta}{C}.$
 Therefore, for all $x\in[0, \pi-x^*]$ and $C$ large enough we have 
 \begin{align*}
\left|\frac{\sin ( x) \int_0^{x}f(\bar x)\,d\bar x}{W_{-1}\widetilde W_\theta}\right|\leq \theta \|v\|_{L^\infty([-\pi-x^*, \pi-x^*])},
 \end{align*}
and there holds \eqref{lemeq-3.3}.
 \end{proof}
 
\bigskip

We are ready to prove Proposition \ref{Prop2.1}. We choose the following weight
 \begin{equation} \label{id:def-Wtheta}
 W_\theta=W_{-1}\widetilde W_\theta.
 \end{equation}
where $W_{-1}$ and $\widetilde{W}_{\theta}$ are as in Lemma \ref{Le-2.2} and Lemma \ref{Le-2.3}.

\begin{proof}[Proof of Proposition \ref{Prop2.1}]

Applying Duhamel's formula to   problem  \eqref{eq-L} yields that
 \begin{equation}\label{ }
\veps = e^{tL} \veps_0 =  e^{tL_0} \veps_0 +\int_0^t e^{(t-\tau)L_0}\left(\sin(x)\int_0^x\veps(\tau, \bar x)\,d\bar x\right) \,d\tau.
\end{equation}
Since $\sin x \,\widetilde{W}_{\theta}'(x)\geq0$, one can apply   Lemma \ref{Le-2.2}.  This together with Lemma \ref{Le-2.3}  yields  that
  \begin{align*}
 \left \|\frac{ \veps(t, \cdot)}{W_\theta}\right\|_{L^\infty(\Omega^*(t))}  \leq& e^{-t} \left\|\frac{\veps_0}{W_\theta} \right\|_{L^\infty(\Omega^*(0))}+\int_0^t e^{(\tau-t)} \left \|\frac{\sin(x)\int_0^x \veps(\tau, \bar x)\,d\bar x}{W_\theta}\right\|_{L_x^\infty(\Omega^*(\tau))}\,d\tau\\
\leq&  e^{-t} \left \|  \frac{\veps_0}{W_\theta}\right\|_{L^\infty(\Omega^*(0))}+\theta \int_0^t e^{(\tau-t)} \left\|\frac{\veps (\tau)}{W_\theta}\right\|_{L_x^\infty(\Omega^*(\tau))}\,d\tau,
\end{align*}
which finally  implies \eqref{eq-2.00} thanks to Gronwall's lemma.   
 \end{proof}
 
 \bigbreak
 
 \subsection{The quasilinearized problem}\label{subsec.quasilin}
 At this stage,  we are ready to establish  a key  exponential time-decay estimate for  the following quasilinearized transport equation with nonlocal term
\begin{equation}\label{eq:quasi:L}
\left\{\begin{aligned} 
&\partial_s u - L (u) + \left(\int_{0}^y \eta(s,\bar y) \,d\bar y\right) \partial_y u =0\quad{\rm on}~~ [s_0,s_1]\times \Omega^*(s)\\
&u(s_0, y)=u_0(y)\quad{\rm on}~~   \Omega^*_{s_0},
\end{aligned}\right.
\end{equation}
where  the moving domain $\Omega^*(s)$ is the image of $\Omega^*_{s_0}:=[-\pi -x^*_{s_0}, \pi-x^*_{s_0}]$ by the characteristic flow associated to \eqref{eq:quasi:L}:
 \begin{equation}\label{Flow2}
\frac{d Y(s, z)}{ds}=\sin(Y(s,  z)) +\int_0^{Y(s, z)} \eta(s, \bar y)\, d\bar y,\quad Y(s_0, z)=z\in \Omega^*_{s_0}.
\end{equation}

  Note that   $x^*(s)$ given by
\begin{equation}\label{def-x*-Q-S3}
\frac{d x^*(s)}{ds}=-\sin(x^*(s))  + \int_{-\pi-x^*(s)}^0\eta(s, \bar y)\, d\bar y,  \quad x^*(s)|_{s=s_0}=x^*_{s_0}
  \end{equation}
  ensures  $\pm\pi-x^*(s)$   satisfy \eqref{Flow2}.    Therefore $\pm\pi-x^*(s)$ is the image of $\pm\pi-x^*_{s_0}$ by the characteristic flow \eqref{Flow2},  and there holds $\Omega^*(s)=[-\pi-x^*(s),  \pi-x^*(s)]$.

We term the problem \eqref{eq:quasi:L} as ``quasilinearized" since later on the function $\eta$ will actually depend on the solution. The equation  considered here is the full linearized equation of  the nonlinear problem \eqref{eq-vareps} to which one adds the nonlinear transport term,  after performing a suitable change of variables and modulation analysis, leading to Equation \eqref{eq-xi}.

%\cpcomment{I removed `after reformulation,  see  \eqref{eq-xi}.' CC: I think it needs to be left here, because we perform a nonlinear transformation that changes the lienarized operator. So I changed the sentence below.} to which one adds the nonlinear transport term, \cp{ after performing a suitable change of variables and modulation analysis, leading to Equation \eqref{eq-xi}}.

    We introduce the solution map $S(s,s_0)$ associated to  \eqref{eq:quasi:L},
that is to say  for given $0\leq s_0 \leq s_1$ and $u_0$,  then $s\mapsto S(s,s_0)u_0$ is the solution to \eqref{eq:quasi:L}.

\begin{prop}\label{Prop-quasi}
Let $W_\theta$ be given by \eqref{id:def-Wtheta}. We keep the notations of Lemmas   \ref{Le-2.2},  \ref{Le-2.3} and assume the following assumptions  on a time interval $[0, s_1]:$
 \begin{align}
|x^*(s)|\leq& \delta^*, \label{bd:Leq2}\\
  \|\eta(s, \cdot)\|_{L^{\infty}(\Omega^*)}\leq& \delta^*.\label{bd:Leq3}
 \end{align} 

 Then, for any $0<\theta<\theta'<1,$ if $\delta^*>0$ is small enough, we have for all $s\in [s_0,s_1]:$
\begin{equation}\label{eq-Prop3.4}
\left\|\frac{S(s,s_0)u_0}{W_\theta}\right\|_{L^\infty{(\Omega^*(s))}}\leq  e^{-(1-\theta')(s-s_0)}~
 \left\|\frac{u_0}{W_\theta}\right\|_{L^\infty{(\Omega^*_{s_0})}}.
\end{equation}

\end{prop}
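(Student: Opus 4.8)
The plan is to treat Proposition~\ref{Prop-quasi} as a perturbation of the reduced linear problem handled in Proposition~\ref{Prop2.1}, where the extra ingredient is the nonlinear transport term $\left(\int_0^y \eta\,d\bar y\right)\partial_y u$. The key observation is that this term is a \emph{pure transport} term: it moves along characteristics but does not change the amplitude of $u$. Consequently, I would work directly with the characteristic flow \eqref{Flow2} rather than try to split off the operator $L$ alone. Let me set up the characteristics $Y(s,z)$ solving \eqref{Flow2} and define $v(s,z) = \frac{u(s,Y(s,z))}{W_\theta(Y(s,z))}$, the renormalized solution expressed in Lagrangian variables on the fixed reference domain $\Omega^*_{s_0}$.

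\begin{proof}[Proof sketch]
\emph{Step 1: Lagrangian reformulation.} First I would pass to characteristics. Since the full transport speed in \eqref{eq:quasi:L} is exactly $\sin(y)+\int_0^y \eta\,d\bar y$, the same speed defining the flow \eqref{Flow2}, the transport part of the equation is absorbed by following $Y(s,z)$. Along characteristics the equation $\partial_s u - L(u) + (\int_0^y \eta)\partial_y u = 0$ becomes, writing $u^*(s) = u(s,Y(s,z))$,
\begin{equation*}
\frac{d}{ds} u^*(s) = 2\cos(Y)\,u^* + \sin(Y)\int_0^Y \veps\,d\bar y,
\end{equation*}
where the $-\sin(y)\partial_y u$ piece of $L$ has been folded into the flow. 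This is structurally identical to the reduced problem \eqref{eq-L}, except that the flow now has the extra drift $\int_0^y\eta\,d\bar y$, and the endpoints of the moving domain evolve according to \eqref{def-x*-Q-S3} rather than \eqref{def-x*-S3}.

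\emph{Step 2: Supersolution / comparison principle.} The heart of the matter is to redo the supersolution argument of Lemma~\ref{Le-2.2} and Proposition~\ref{Prop2.1} with the modified flow. I would verify that $e^{-(1-\theta')(s-s_0)} W_\theta$ remains a supersolution of the full operator $\partial_s - L + (\int_0^y\eta)\partial_y$ on the moving domain $\Omega^*(s)$. The computation of $(\partial_s - L_0)(e^{-(1-\theta')s}W_\theta)$ already gives the favorable sign with margin $\theta'$ (using $\theta<\theta'$), the first nonlocal term $\sin(y)\int_0^y \veps\,d\bar y$ is absorbed by the weight estimate of Lemma~\ref{Le-2.3}, and the new transport drift contributes a term of size controlled by $\|\eta\|_{L^\infty}\le \delta^*$ times $\partial_y W_\theta / W_\theta$. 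The key point here is that $W_\theta = W_{-1}\widetilde W_\theta$ vanishes only at order $|y|^3$ at the origin, so $|\partial_y W_\theta|/W_\theta$ blows up like $1/|y|$ near $0$; however, the drift $\int_0^y\eta\,d\bar y$ is itself $O(|y|)$ near the origin since $\eta$ is bounded, so the product $\left(\int_0^y\eta\right)\frac{\partial_y W_\theta}{W_\theta}$ stays bounded by $C\delta^*$. Choosing $\delta^*$ small makes this error strictly smaller than the spectral gap $\theta'-\theta$, closing the supersolution inequality.

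\emph{Step 3: Duhamel and Gronwall.} As in the proof of Proposition~\ref{Prop2.1}, I would then write the Duhamel representation of $S(s,s_0)u_0$ against the semigroup generated by the flow-plus-$L_0$ part, use the comparison principle to control each Duhamel iterate against $e^{-(1-\theta')(s-s_0)}W_\theta$, and absorb the nonlocal $\sin(y)\int_0^y$ term perturbatively via Lemma~\ref{Le-2.3} with the factor $\theta$. A final application of Gronwall's lemma upgrades the bound to the stated exponential rate $e^{-(1-\theta')(s-s_0)}$ on the renormalized quantity $\|u/W_\theta\|_{L^\infty(\Omega^*(s))}$, which is exactly \eqref{eq-Prop3.4}.
\end{proof}

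The step I expect to be the main obstacle is \textbf{Step 2}, specifically verifying that the extra transport drift $\int_0^y\eta\,d\bar y$ does not destroy the supersolution property near the degeneracy point $y=0$ of the weight. The delicate balance is between the cubic vanishing of $W_\theta$ (which makes $\partial_y W_\theta/W_\theta \sim 1/|y|$ singular) and the linear vanishing of the drift near the origin; the whole scheme hinges on the precise matching of these two orders, which is exactly why the paper emphasizes that $W_\theta$ vanishes \emph{at order} $O(|y|^3)$. I would also need to be careful that the moving endpoints $\pm\pi - x^*(s)$, now governed by \eqref{def-x*-Q-S3} with its own $\eta$-dependent drift, stay within the region where the weight estimates of Lemmas~\ref{Le-2.2}--\ref{Le-2.3} were established, which is guaranteed by assumption \eqref{bd:Leq2} together with smallness of $\delta^*$.
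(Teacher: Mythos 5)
Your proposal follows essentially the same route as the paper: a supersolution/comparison argument for the local operator $L_0$ plus the quasilinear drift (Lemma \ref{Le-quasi}), with exactly the key balance you identify between the cubic vanishing of $W_\theta$ and the linear vanishing of $\int_0^y\eta\,d\bar y$ at the origin, followed by Duhamel, Lemma \ref{Le-2.3} and Gronwall for the nonlocal term $\sin(y)\int_0^y$. The only imprecision is that in your Step 2 you suggest absorbing the nonlocal term into the supersolution itself, whereas it must be (and in your Step 3 is) handled perturbatively via Duhamel, since the comparison principle does not directly accommodate it.
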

   
 Keeping  Lemma \ref{Le-2.3} in mind,   similarly to the proof of  Proposition \ref{Prop2.1}, we start by showing the above statement by replacing the nonlocal operator $L$ by the associated  local operator $L_0.$

   \begin{lem} \label{Le-quasi}
Under the assumptions of Proposition \ref{Prop-quasi} we have
$$
\left\|\frac{\bar{S}(s,s_0)u_0}{W_\theta}\right\|_{L^\infty{(\Omega^*(s))}}\leq  e^{-(1-\theta')(s-s_0)}~
 \left\|\frac{u_0}{W_\theta}\right\|_{L^\infty{(\Omega^*_{s_0})}}.
$$
where $\bar S(s,s_0)u_0$ denotes the solution to
$$
\left\{\begin{aligned} 
&\partial_s u - L_0 (u) + \left(\int_{0}^y \eta(s,\bar y) \,d\bar y\right) \partial_y u =0\quad{\rm on}~~ [s_0,s_1]\times \Omega^*(s)\\
&u(s_0, y)=u_0(y)\quad{\rm on}~~   \Omega^*_{s_0},
\end{aligned}\right.
$$

\end{lem}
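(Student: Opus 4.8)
The plan is to run the same supersolution/comparison argument as in Lemma \ref{Le-2.2}, now folding the additional transport term $\big(\int_0^y\eta\,d\bar y\big)\partial_y u$ directly into the characteristics rather than treating it by Duhamel (which would be hopeless, since this term sees $\partial_y u$, a quantity the weighted $L^\infty$ norm does not control). I set $C:=\big\|u_0/W_\theta\big\|_{L^\infty(\Omega^*_{s_0})}$ and take as candidate supersolution $\Phi(s,y):=C\,e^{-(1-\theta')(s-s_0)}W_\theta(y)$, with $W_\theta=W_{-1}\widetilde W_\theta$ as in \eqref{id:def-Wtheta}.

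First I would record the comparison principle for the modified equation. Writing it as $\partial_s u+V(s,y)\partial_y u=2\cos(y)\,u$ with $V(s,y)=\sin(y)+\int_0^y\eta(s,\bar y)\,d\bar y$, the relation \eqref{def-x*-Q-S3} guarantees that the endpoints $\pm\pi-x^*(s)$ are themselves characteristics, so $\Omega^*(s)$ is exactly transported and no characteristic enters or leaves through the boundary. Along each characteristic $Y(s,z)$ from \eqref{Flow2} the equation becomes the linear ODE $\frac{d}{ds}u(s,Y)=2\cos(Y)\,u(s,Y)$; hence $e^{-\int_{s_0}^s 2\cos(Y)}\,w(s,Y(s))$ is monotone for any $W^{1,\infty}$ super-/subsolution $w$, and a supersolution nonnegative on $\Omega^*_{s_0}$ stays nonnegative on $\Omega^*(s)$, exactly as in Lemma \ref{Le-2.2}.

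Next comes the supersolution check. Since $W_\theta$ is time-independent,
$$\Big(\partial_s-L_0+\big(\int_0^y\eta\,d\bar y\big)\partial_y\Big)\Phi = C\,e^{-(1-\theta')(s-s_0)}\Big[\big(-W_\theta-L_0 W_\theta\big)+\theta' W_\theta+\big(\int_0^y\eta\,d\bar y\big)W_\theta'\Big].$$
Applying Lemma \ref{Le-2.2} with $\widetilde W=\widetilde W_\theta$ (admissible since $\sin x\,\widetilde W_\theta'\ge 0$) shows that $e^{-t}W_\theta$ is a supersolution of the pure $L_0$ flow, i.e. $-W_\theta-L_0 W_\theta\ge 0$ on $\Omega^*(s)$. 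It then suffices to absorb the transport error into the damping margin $\theta' W_\theta$ supplied by the slightly slower rate $1-\theta'$.

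The heart of the proof, and the main obstacle, is the uniform bound $\big|\big(\int_0^y\eta\,d\bar y\big)W_\theta'\big|\le\theta' W_\theta$ on $\Omega^*(s)$. I would prove it through the logarithmic derivative $W_\theta'/W_\theta=W_{-1}'/W_{-1}+\widetilde W_\theta'/\widetilde W_\theta$: the second summand is bounded by $C/\theta$, while for the first, the fact that $W_{-1}$ vanishes \emph{only} at $0$ and \emph{exactly} at order $|y|^3$ makes $|y|\,|W_{-1}'/W_{-1}|$ bounded on $[-\tfrac{2\pi}3,\tfrac{2\pi}3]$ (it tends to $3$ at the origin) and zero on the outer region where $W_{-1}$ is constant. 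Combined with $\big|\int_0^y\eta\,d\bar y\big|\le\delta^*|y|$ from \eqref{bd:Leq3} and $|y|\le\pi+\delta^*$, this gives $\big|\big(\int_0^y\eta\,d\bar y\big)W_\theta'/W_\theta\big|\lesssim\delta^*\,(1+C/\theta)$, which is $\le\theta'$ once $\delta^*$ is small relative to $\theta,\theta',C$. This is precisely where the two structural features of the weight emphasized after Proposition \ref{Prop2.1} are used. With the bracket thus nonnegative, $\Phi$ is a supersolution; applying the comparison principle to $\Phi\pm\bar S(s,s_0)u_0$, which are nonnegative at $s=s_0$ by the choice of $C$, yields $|\bar S(s,s_0)u_0|\le\Phi$, i.e. the claimed estimate.
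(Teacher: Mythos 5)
Your proposal is correct and follows essentially the same route as the paper: the same candidate supersolution $C\,e^{-(1-\theta')(s-s_0)}W_\theta$ and the same comparison principle along the characteristics of the modified transport field (with the endpoints $\pm\pi-x^*(s)$ being characteristics so that the moving domain is exactly transported). The only cosmetic difference is that you verify the supersolution inequality by bounding the logarithmic derivative $|y|\,|W_\theta'/W_\theta|$ uniformly, whereas the paper splits the damping margin $\theta'W_{-1}\widetilde W_\theta$ in two halves and pairs each with one of $\bigl(\int_0^y\eta\bigr)W_{-1}'\widetilde W_\theta$ and $\bigl(\int_0^y\eta\bigr)W_{-1}\widetilde W_\theta'$; both computations rest on the identical structural facts about $W_{-1}$ and $\widetilde W_\theta$.
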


   \begin{proof}
   The proof is based on the comparison principle along characteristics.  Define $ \left\|\frac{u_0}{W_\theta}\right\|_{L^\infty{(\Omega^*_{s_0})}} =:C,$ which yields $-C \,W_\theta\leq u_0\leq C\, W_\theta$ on $\Omega^*_{s_0}.$  In what follows,   for any $0<\theta<\theta'<1,$ we  want to show  that $e^{-(1-\theta')s}W_\theta$  is a supersolution for suitable  small  constant $\delta^*$  {(see \eqref{bd:Leq2} and \eqref{bd:Leq3})},
where  $W_\theta=W_{-1}\, \widetilde{W}_{\theta}$  is taken  exactly the same as in  the proof of Proposition \ref{Prop2.1}.  We write that 
    \begin{multline}\label{Le5-000}
  \left(\partial_s  - L_0 + \left(\int_{0}^y \eta(s,\bar y) \,d\bar y\right) \partial_y \right) \left(e^{-(1-\theta')s}W_\theta \right)  \\
   \qquad=e^{-(1-\theta')s} \left( \frac{\theta'}{2} W_{-1} + \left(\int_{0}^y \eta(s,\bar y) \,d\bar y\right){W}_{-1}' \right) \widetilde{W}_\theta \\
   \qquad  \qquad+e^{-(1-\theta')s} \left( \frac{\theta'}{2} \widetilde{W}_\theta  + \left(\int_{0}^y \eta(s,\bar y) \,d\bar y\right)  \widetilde{W}_{\theta}' \right)W_{-1}  \\
  \qquad  + e^{-(1-\theta')s}  \sin(y)W_{-1} \widetilde{W}_{\theta}'.
  \end{multline}
  
Thanks to the symmetry properties of $W_{-1}$ and $\widetilde{W}_{\theta},$ we will only discuss about the case $y\in[0, \pi-x^*]$ and $-\delta^* <x^*(s)\leq0,$    for any fixed $s\in[s_0,  s_1].$ 
 Noticing  that
 \begin{equation*}
| W_{-1}'|=|\cos(y)\sin^2(y/2)+\sin(y)\sin(y/2)\cos(y/2)|\leq 3\sin^2(y/2).
 \end{equation*}
By virtue of  \eqref{bd:Leq3} and by the fact that $\sin (y)\geq \frac{y}{\pi}$ for  all  $y\in[0, \frac{2\pi}{3}], $    we have
\begin{align}
\frac{\theta'}{2} W_{-1} + \left(\int_{0}^y \eta(s,\bar y) \,d\bar y\right){W}_{-1}' \geq& \left(\frac{\theta'}{2}\sin (y)-6cy\right)\sin^2(y/2)\notag\\
\geq& \left(\frac{\theta'}{2\pi} -6c\right)y\sin^2(y/2)\geq 0,
  \end{align}
 whenever $\delta^*\leq \frac{\theta'}{12\pi}.$ While for $y\in(\frac{2\pi}{3}, \pi-x^*],$ as $W_{-1}'\equiv0,$ we still have 
 \begin{align*}
 \frac{\theta'}{2} W_{-1} + \left(\int_{0}^y \eta(s,\bar y) \,d\bar y\right){W}_{-1}' \geq0.
\end{align*}

In order to consider the second part of the right-hand-side of \eqref{Le5-000}, we write that for $y\in[0, \pi]$ that
\begin{align}
 \frac{\theta'}{2} \widetilde{W}_\theta  + \left(\int_{0}^y \eta(s,\bar y) \,d\bar y\right)  \widetilde{W}_{\theta}'&= \frac{\theta'}{2} e^{\frac{C}{\theta} y}+ \frac{C}{\theta}  \left(\int_{0}^y \eta(s,\bar y) \,d\bar y\right) e^{\frac{C}{\theta} y}\notag\\
 &\geq \left( \frac{\theta'}{2}-\frac{2C\delta^*\pi}{\theta}\right)e^{\frac{C}{\theta} y}\geq 0,
\end{align}
provided that $\delta^* \leq \frac{\theta \theta'}{4C\pi}.$
   Similarly,  for $y\in[\pi, \pi-x^*]$,
 \begin{align}
 \frac{\theta'}{2} \widetilde{W}_\theta  + \left(\int_{0}^y \eta(s,\bar y) \,d\bar y\right)  \widetilde{W}_{\theta}'&= \frac{\theta'}{2} e^{\frac{C}{\theta} (2\pi-y)}+ \frac{C}{\theta}  \left(\int_{0}^y \eta(s,\bar y) \,d\bar y\right) e^{\frac{C}{\theta} (2\pi-y)}\notag\\
 &\geq \left( \frac{\theta'}{2}-\frac{4C\delta^*\pi}{\theta}\right)e^{\frac{C}{\theta} (2\pi-y)}\geq 0,
\end{align}
 provided that $\delta^* \leq \frac{\theta \theta'}{8C\pi}.$ 
 Finally,  due to the fact that $\sin(y)\widetilde{W}_{\theta}'\geq0,$ we conclude from above estimates that
 \begin{equation}
  \left(\partial_s u - L_0 + \left(\int_{0}^y \eta(s,\bar y) \,d\bar y\right) \partial_y \right) \left(e^{-(1-\theta')s}W_\theta \right) \geq 0, \quad {\rm for~ all}~~y\in[0, \pi-x^*].
 \end{equation}

This shows that  $e^{-(1-\theta')(s-s_0)}CW_\theta\pm \bar{S}(s, s_0)u_0$ are   supersolutions supplemented with nonegative initial data on $\Omega^*_{s_0}.$  Consequently,  we infer from  the comparison principle that these supersolutions are nonnegative on $\Omega^*(s)$ after time evolution,  which completes the proof.
  \end{proof}

 \begin{proof}[Proof of Proposition \ref{Prop-quasi}]
 
 It can be proved as a consequence of Lemma \ref{Le-quasi} and of the Duhamel formula, exactly as Proposition \ref{Prop2.1} was proved as a consequence of Lemma \ref{Le-2.3}.
 \end{proof}

%%%%%%%%%%%%%%%%%%%%%%%%%%%%%%%%%%%%%%%%%%%%%%%%
\section{Reformulation of the nonlinear problem}\label{S4}
 
 To prove the main theorem, we need to do some preparatory work to convert the original problem in a formulation that is covered by the reduced problem discussed in Section \ref{sec3}. We recall that Equation \eqref{eq-main} has the scaling symmetry \eqref{id:scaling-symmetry}. Therefore, we will only prove the stability of the stationary solution $\cos(\cdot)$, which will imply the stability of the other stationary solutions $\mu \cos(\cdot)$ for $\mu>0$.
 
%\subsection{Explanation of the change of variables and unknown} We...
%We see from Section \ref{sec3} that   solutions of the reduced problem with well-prepared initial data are exponentially decay in time.  However, compared with $L$ the full linear operator $\mathcal{L}$ has two additional constant remainders, which are nonlocal and  not a small perturbation of $L.$  To handle this difficulty, we introduce suitable space-translation to cancel the first remainder. To 

%\comment{CC to JT: What do you think of writing here a subsection that explains why we perform all the renormalizations, change of variables, change of unknowns and decompositions we are going to do? I copy-pasted what you previously wrote}
%\comment{JT:   To handle the full linear operator $\mathcal{L},$ at first we introduce the following initial space-translation $x_0^*$: for any $a_0\in C^3(\Omega)$ satisfies \eqref{Th-ic}, let $x^*_0$ be the critical point of $a_0$ that closest to the origin. Note that since $a_0$ is close enough to $\cos x$ in $ C^3$ norm,  the existence and uniqueness of such $x_0^*$ is confirmed.  Indeed, $x^*_0=(a'_0)^{-1}(0)\sim 0.$}

\subsection{Renormalization and equation in self-similar variables}

We will perform a change of variables consisting in a rescaling and a translation in space. 

Firstly, due to the scaling invariance \eqref{id:scaling-symmetry}, we will perform a rescaling to convert the stability problem for the full family of stationary states $\{\mu \cos(x)\}_{\mu>0}$ to the stability problem for the fixed function $\cos(x)$.

Secondly, the transport field $\int_{-\pi}^x a(t, \bar x) \,d\bar x$ in \eqref{eq-main} vanishes at the boundary $x=\pm\pi$, so that $x=\pm\pi$ is a fixed point of the characteristics. We will perform a translation in space so that in the new variables, it is the point at which the solution attains its maximum that is a fixed point of the new characteristics. We recall that by Lemma \ref{lem:propagation-maximum} the maximum of solutions to \eqref{eq-main} is transported along the characteristics.

To fix the point at which the solution $a$ attains its maximum, we shall use that Equation \eqref{eq-main} is almost invariant by translation in space. That is to say, all terms except $\int_{-\pi}^x  a(t, \bar x) \,d\bar x$ in \eqref{eq-main} are invariant by spatial translations.

Let $a$ solve \eqref{eq-main}. For two $C^1$ functions of time  $\mu=\mu(s)>0$ and $x^*=x^*(s)\in \mathbb R$ we introduce the change of variables
\begin{equation} \label{self-similar-variables}
a(t,x)=\mu(s)A(s,y), \qquad s=\int_0^t \mu(t')dt', \qquad y=x-x^*(s).
\end{equation}
%\cpcomment{This $A$ has nothing with the $A$ in the proof of Proposition \ref{Le-SS}; I suggest that we change the notation in the proof of Proposition \ref{Le-SS}. CC: I think that's OK, the $A$ in the proof of Proposition \ref{Le-SS} is an internal variable used only in the proof, so it can be used elsewhere.}
Then Equation \eqref{eq-main} becomes
$$
\partial_s A+\frac{\dot{\mu}}{\mu} A +\left( \int_{-\pi-x^*}^y A(s,\bar y)d\bar y-\dot{x}^* \right)\partial_y A-A^2+\frac{1}{\pi}\int_{-\pi-x^*}^{\pi+x^*} A^2(s,\bar y)d\bar y=0
$$
where we use a dot to denote differentiation in time $s$, i.e. $\dot{\mu} =\frac{d}{ds}\mu$ and $\dot{x}^* =\frac{d}{ds}x^*$. 
We define $x^*$ to solve
\begin{equation} \label{modulation-equation-x*}
\dot{x}^* =\int_{-\pi-x^*}^0 A(s,\bar y)d\bar y.
\end{equation}
We still denote by $A$ the $2\pi$-periodic extension of $A$ to the whole line $\mathbb R$. Note that $A$ might have discontinuities at $\pi-x^*$ and $-\pi-x^*$.

Using \eqref{modulation-equation-x*} and that $A$ is a 2$\pi$-periodic function, we get that the new renormalized unknown $A$ solves 
\begin{equation}\label{equation-after-renormalization}
\partial_s A+\frac{\dot{\mu}}{\mu} A +\left(\int_{0}^y A(s,\bar y)d\bar y\right)\partial_y A-A^2+\frac{1}{\pi}\int_{-\pi}^\pi A^2(s,\bar y)d\bar y=0, \quad {\rm for}~~ y\in\Omega^*,
\end{equation}
where $\Omega^*(s)=[-\pi-x^*, \pi-x^*]$ is the moving interval. The function $A$ satisfies the mean-free condition in renormalized variables
\begin{equation} \label{bc-renormalized-variables}
\int_{-\pi}^\pi A(s,y)dy=0.
\end{equation}
We remark that the transformation \eqref{self-similar-variables} actually does not necessitate $x^*$ to be the maximum of $a$. But this is what the reader should have in mind for the sequel.

\subsection{Linearization and spectral analysis}

We note that for each $\mu_0>0$,  the profile $(\mu(s), x^*(s), A(s,y))=(\mu_0, 0, \cos(y))$ produces a stationary solution to the renormalized equations \eqref{modulation-equation-x*}-\eqref{equation-after-renormalization}, which corresponds to the family of stationary states $\{{\mu_0} \cos(x)\}_{{\mu_0}>0}$ in original variables. We now decompose
$$
A(s,y)=\cos(y)+\eta(s,y)
$$
and the equations \eqref{modulation-equation-x*}-\eqref{equation-after-renormalization}-\eqref{bc-renormalized-variables} become
\begin{equation}\label{eq-x*}
 \dot{x}^*  = -\sin x^* +\int_{-\pi-x^*}^0 \eta(\bar y)\,d \bar y,
 \end{equation}
and
\begin{align}\label{eq-eta}
& \partial_s \eta - \mathcal{L}(\eta)+\frac{\dot{\mu}}{\mu}\left(\cos(y)+\eta \right) +  \left(\int_{0}^y \eta(t, \bar y) \,d\bar y\right) \partial_y \eta  -\eta^2 +\frac{1}{\pi}  \int_{-\pi}^{\pi}\eta^2(\bar y)\,d\bar y=0, \\
& \label{bc-renormalized-variables-2} \int_{-\pi}^\pi \eta(s,y)dy=0, 
\end{align}
where the associated linear operator $\mathcal{L}$ takes the form
\begin{equation*}
\mathcal{L}(\eta)= 2\cos y \,\eta -\sin y\,\partial_y \eta +\sin y \int_{0}^y \eta( \bar y) \,d\bar y - \frac{2}{\pi}  \int_{-\pi}^{\pi}\cos \bar y\, \eta(  \bar y)\,d\bar y, \quad y\in\Omega^*,
\end{equation*}
and the initial conditions are
\begin{align}
&\mu(s)|_{s=0}:=\mu_0,\\
&x^*(s)|_{s=0}:=x^*_0,\\
&\eta(s, y)|_{s=0}:=\eta_0(y)=  \frac{1}{\mu_0}a_0(x^*_0+y)-\cos (y). \label{definition-eta0}
\end{align}
As $A$ and $\cos$ are both $2\pi$ periodic, notice that $\eta$ is also $2\pi$ periodic, with possible discontinuities at $\pi-x^*$ and $-\pi-x^*$.

We now give some explicit eigenmodes of the linear operator $\mathcal{L},$ that will play an important role for our analysis later on. They are obtained from the following algebraic identities.

 \begin{lem}\label{Le-3.1}
The following identities hold
 \begin{align*}
&  \mathcal L(\cos(y))=0,\qquad  \mathcal L(1)=2\cos (y)+y \sin (y),\qquad \mbox{and} \qquad \mathcal L(y\sin(y))=1.
 \end{align*}
In particular,
\begin{align*}
& \varphi_{-1}(y)=-2\cos(y)+1-y\sin (y) ,\\
&  \varphi_{0}(y)=\cos(y) ,\\
&  \varphi_{1}(y)=2\cos(y)+1+y\sin(y),
\end{align*}
are eigenfunctions of $\mathcal L$ in the sense that
$$
\mathcal L \varphi_l= l\varphi_l, \qquad l=-1,0,1.
$$

%\cpcomment{I changed the index $i$ to $l$ to avoid any confusion with the complex number $i$.}

\end{lem}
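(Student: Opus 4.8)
The plan is to prove the three displayed algebraic identities by direct computation, and then to deduce the eigenfunction statements purely from the linearity of $\mathcal{L}$. Indeed, since $\varphi_0=\cos y$, $\varphi_1=2\cos y+1+y\sin y$ and $\varphi_{-1}=-2\cos y+1-y\sin y$ are all linear combinations of the three test functions $\cos y$, $1$ and $y\sin y$, once we know $\mathcal{L}(\cos y)=0$, $\mathcal{L}(1)=2\cos y+y\sin y$ and $\mathcal{L}(y\sin y)=1$, we immediately obtain $\mathcal{L}\varphi_1=(2\cos y+y\sin y)+1=\varphi_1$ and $\mathcal{L}\varphi_{-1}=(2\cos y+y\sin y)-1=-\varphi_{-1}$, which are precisely $\mathcal{L}\varphi_l=l\varphi_l$ for $l=\pm1$, while $l=0$ is the identity $\mathcal{L}(\cos y)=0$. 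So the whole lemma reduces to the three base identities.

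To establish them, I would split $\mathcal{L}(\eta)$ into its four constituent pieces: the potential term $2\cos y\,\eta$, the transport term $-\sin y\,\partial_y\eta$, the local nonlocal term $\sin y\int_0^y\eta(\bar y)\,d\bar y$, and the global projection $-\frac{2}{\pi}\int_{-\pi}^\pi\cos\bar y\,\eta(\bar y)\,d\bar y$, and evaluate each on the three test functions. For $\eta=\cos y$ one uses $\int_0^y\cos\bar y\,d\bar y=\sin y$ and $\int_{-\pi}^\pi\cos^2\bar y\,d\bar y=\pi$, so the four pieces equal $2\cos^2y$, $\sin^2y$, $\sin^2y$ and $-2$; the Pythagorean identity then gives the sum $2(\cos^2y+\sin^2y)-2=0$. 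For $\eta=1$ the transport term vanishes and the global projection vanishes because $\int_{-\pi}^\pi\cos\bar y\,d\bar y=0$, leaving only $2\cos y$ from the potential term and $y\sin y$ from the local nonlocal term, which is the claimed value.

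The only identity requiring genuine computation is $\mathcal{L}(y\sin y)=1$, and it is here that the main (purely bookkeeping) work lies. Two integrations by parts are needed: the local antiderivative $\int_0^y\bar y\sin\bar y\,d\bar y=\sin y-y\cos y$, and the global moment $\int_{-\pi}^\pi\bar y\sin\bar y\cos\bar y\,d\bar y=\tfrac12\int_{-\pi}^\pi\bar y\sin(2\bar y)\,d\bar y=-\tfrac{\pi}{2}$, where the boundary terms at $\pm\pi$ are evaluated using $\cos(2\pi)=1$ and $\sin(2\pi)=0$. Using $\partial_y(y\sin y)=\sin y+y\cos y$, the four pieces come out to $2y\sin y\cos y$, $-\sin^2y-y\sin y\cos y$, $\sin^2y-y\sin y\cos y$ and $1$; the $y\sin y\cos y$ terms cancel (since $2-1-1=0$), the $\sin^2y$ terms cancel, and one is left with the constant $1$. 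I expect no conceptual obstacle in this lemma: the entire difficulty is organizing the cancellations in $\mathcal{L}(y\sin y)$ and keeping careful track of the boundary contributions in the global moment integral.
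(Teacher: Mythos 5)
Your proof is correct: I checked all three base identities ($\mathcal L(\cos y)=0$, $\mathcal L(1)=2\cos y+y\sin y$, $\mathcal L(y\sin y)=1$) term by term, including the moment integral $\int_{-\pi}^{\pi}\bar y\sin\bar y\cos\bar y\,d\bar y=-\tfrac{\pi}{2}$, and the reduction of the eigenfunction claims to these identities by linearity is exactly right. Your route differs from the paper's on two of the three identities. The paper computes $\mathcal L(1)$ explicitly, as you do, but obtains $\mathcal L(\cos y)=0$ and $\mathcal L(y\sin y)=1$ not by direct integration but by differentiating the two-parameter family $a_{\mu,\lambda}(x)=\mu\cos(\lambda x)$ of solutions to a modified stationary equation with respect to $\mu$ and $\lambda$ at $(\mu,\lambda)=(1,1)$: the $\mu$-derivative (the scaling symmetry) produces the kernel element $\cos y$, and the $\lambda$-derivative (an almost-symmetry) produces $y\sin y$ with $\mathcal L(y\sin y)=1$. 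Your direct computation is more elementary and fully self-contained, at the cost of some bookkeeping; the paper's argument is shorter on those two identities and, more importantly, explains conceptually where the eigenfunctions come from, which is why the authors phrase it that way. Either proof is acceptable for the lemma as stated.
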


\begin{proof}
 
An explicit computation shows $\mathcal L (1)=2\cos y +0+y\sin y +0$. It is then instructive to consider how the two other functions $\cos(y)$ and $y\sin(y)$ appear from a symmetry and an almost symmetry of the stationary equation. Namely, for all $\mu \in \mathbb R$ and $\lambda>0$, the function $a_{\mu,\lambda}(x)= \mu \cos(\lambda x){\mathbf{1}_{(-\frac{\pi}{\lambda}\leq x \leq \frac{\pi}{\lambda})}}$ solves the modified stationary equation
$$
\left(\int_0^y a_{\mu,\lambda}(\bar y)d\bar y\right)\partial_y a_{\mu,\lambda}- a_{\mu,\lambda}^2+\frac{\lambda}{\pi} \int_{\mathbb R} a_{\mu,\lambda}^2(\bar y)d\bar y =0
$$
on its support. Differentiating the above equation with respect to $\mu$ at $(\mu,\lambda)=(1,1)$ gives $\mathcal L(\cos y)=0$. Differentiating with respect to $\lambda$ gives $\mathcal L(y\sin y)=1$.
\end{proof}

We remark the following Taylor expansions at the origin,
\begin{align} \label{taylor-origin-eigenfunctions}
 \varphi_{-1}(y)= -1+O(y^4), \qquad  \varphi_{0}(y)=1-\frac{y^2}{2}+O(y^4), \qquad \varphi_{1}(y)=3+O(y^4)
\end{align}
as $y\to 0$. We also compute the mean of the eigenfunctions
\begin{align} \label{mean-eigenfunctions}
\int_{-\pi}^\pi \varphi_{-1}(y)dy=0, \qquad \int_{-\pi}^\pi \varphi_{0}(y)dy=0, \qquad \frac{1}{2\pi}\int_{-\pi}^\pi \varphi_{1}(y)dy=2.
\end{align}

\begin{rmk}
Although Lemma \ref{Le-3.1} states that there exists a linear instability, we notice that by \eqref{mean-eigenfunctions} the associated eigenfunction $\varphi_{1}$ does not satisfy the zero mean condition as $\frac{1}{2\pi}\int_{-\pi}^\pi \varphi_{1}(y)dy=2$. Therefore, this is not an instability in the set of mean-free functions.
\end{rmk}

\subsection{Decomposition of the perturbation and modulation equations}\label{subs4.3}

We introduce the projection operator
$$
\mathcal{P}(\eta)=   \frac{1}{\pi}  \int_{-\pi}^{\pi}\cos( y) \ \eta(y) \,dy .
$$
This term is not easy to handle in $\mathcal L$, as it is very nonlocal. We will be able to remove it thanks to modulating the perturbation along the explicit eigenfunctions in Lemma \ref{Le-3.1}. Namely, we decompose the constant function $1$ in the $\{\varphi_l\}_{l=-1,0,1}$ basis
$$
1 = \frac{1}{2} (\varphi_{-1}+\varphi_1)
$$
so that the above projection operator is a projection onto a one-dimensional subset of $\textup{Span}(\{\varphi_l\}_{l=-1,0,1})$,
$$
\mathcal{P}(\eta)=  \frac 12 \mathcal{P}(\eta) \varphi_{-1}+\frac 12 \mathcal{P}(\eta) \varphi_1.
$$
We now look for a decomposition
$$
\eta=\xi+ \alpha_{-1}\varphi_{-1}+\alpha_1 \varphi_1
$$
for some time-dependent coefficients $\alpha_{-1}=\alpha_{-1}(s)$ and $\alpha_{1}=\alpha_1(s)$. The linearized operator now takes the form
$$
\mathcal L(\eta) = \left( -\alpha_{-1}-\mathcal{P}(\xi) \right)\varphi_{-1}+\left( \alpha_1 - \mathcal{P}(\xi)\right) \varphi_1 +L( \xi) .
$$
We introduce
\begin{equation}\label{e.defbeta}
\beta=\alpha_{-1}\varphi_{-1}+\alpha_1 \varphi_1,\quad \boldsymbol{\alpha}=(\alpha_{-1}, \alpha_1).
\end{equation}
Notice that $\xi$ is not a $2\pi$-periodic function anymore, but is the sum of a $2\pi$ periodic function with possible discontinuities at $\pi-x^*$ and $-\pi-x^*$, and of a smooth function on $\mathbb R$ that is not $2\pi$-periodic. More precisely, we introduce $\mathcal E^s$ as the operator that extends functions on $\Omega^*(s)$ to $2\pi$-periodic functions on the whole line $\mathbb R$. Then the function $\xi$ on the whole line $\mathbb R$ can be retrieved from its restriction $\xi_{|\Omega^*(s)}$ to $\Omega^*(s)$ via the formula
$$
\xi =\mathcal E^s (\xi_{|\Omega^*(s)})+ \mathcal E^s (\beta_{|\Omega^*(s)})-\beta.
$$
Noticing that for $l=-1,1$, the function $\mathcal E^s (\varphi_{l|\Omega^*(s)})-\varphi_{l}$ restricted to the set $[-\pi,\pi]$ is only non-zero on the set $[-\pi,\pi]\backslash \Omega^*(s)$ that has measure $\lesssim |x^*|$, we have the estimate
\begin{equation} \label{bd:xi-L1--pi-pi}
\| \xi \|_{L^1([-\pi,\pi])} \lesssim \| \xi\|_{L^\infty(\Omega^*(s))}+|x^*||\boldsymbol{\alpha}|.
\end{equation}
Similarly, in equation  \eqref{eq-eta} the quadratic term
$$
\mathcal Q (\eta):=\frac{1}{2\pi} \int_{-\pi}^{\pi}\eta^2(\bar y)\,d\bar y
$$
is not easy to handle, and we will remove it by projecting $2\mathcal Q (\eta)= \mathcal{Q}(\eta) \varphi_{-1}+ \mathcal{Q}(\eta) \varphi_1$. The evolution equation \eqref{eq-eta} becomes
\begin{multline}\label{eq-eta-inter}
  \partial_s \xi - L( \xi) +\frac{\dot\mu }{\mu} \xi + \left(\int_{0}^y \eta(s,  \bar y) \,d\bar y\right) \partial_y \xi -\left(2\beta+\xi \right)\xi\\
 +\left(\dot\alpha_{-1}+\alpha_{-1}+\frac{\dot\mu}{\mu}\alpha_{-1}+\mathcal P(\xi)+\mathcal Q(\eta)\right)\varphi_{-1}+\frac{\dot\mu}{\mu} \varphi_0\\
 +\left(\dot\alpha_{1}-\alpha_{1}+\frac{\dot\mu}{\mu}\alpha_{1}+\mathcal P (\xi)+\mathcal Q (\eta)\right)\varphi_{1}\\
 +  \left(\int_0^y \eta(s,  \bar y)\,d\bar y\right)\partial_y \beta-\beta^2=0
\end{multline}
for $y\in \Omega^*(s)$.

Thanks to Proposition \ref{Prop2.1}, we know that the semigroup $e^{tL}$ decays exponentially fast for functions that vanish up to second order at the origin. We will therefore ensure that $y\mapsto \xi(t,y)$ vanishes up to second order near the origin by choosing suitably the evolution equations for $\mu$, $\alpha_{-1}$ and $\alpha_1$. For this purpose, we investigate the behavior near the origin of the nonlinear terms in the fourth line of \eqref{eq-eta-inter}. We have by \eqref{taylor-origin-eigenfunctions}
$$
\left(\int_{0}^y \eta(s,  \bar y) \,d\bar y  \right) \partial_y  \beta =O(y^4)
$$
and according to the definition of $\beta$ in \eqref{e.defbeta}
\begin{equation}\label{e.betaest}
\beta^2=\alpha_{-1}^2-6\alpha_{-1}\alpha_1+9\alpha_1^2+O({|\boldsymbol{\alpha}|^2}y^4)
\end{equation}
as $y\to 0$. We remark that the behavior above ressembles the Taylor expansion of $\varphi_{-1}$ near the origin. Hence we decompose
$$
\beta^2=  \mathcal{Q}_1(\boldsymbol{\alpha})\varphi_{-1}+\mathcal N_1(s, y),
$$
 where the quadratic form and nonlinear remainder term are
\begin{align*}
& \mathcal{Q}_1(\boldsymbol{\alpha}):=  \alpha_{-1}^2-6\alpha_{-1}\alpha_1+9\alpha_1^2,\\
& \mathcal{N}_1(s, y):=\beta^2-  \mathcal{Q}_1(\boldsymbol{\alpha})\varphi_{-1}.
\end{align*}
According to \eqref{e.betaest} the nonlinear term satisfies
\begin{equation}\label{taylorexpansionN}
|\mathcal{N}_1(s, y)|=O(|\boldsymbol{\alpha}|^2 y^4) \quad {\rm as}~~y\to 0.
\end{equation}
We remark that if $\xi(y)=O(|y|^3)$ near the origin, then all terms in the first line of \eqref{eq-eta-inter} are $O(|y|^3)$ as well. We thus rewrite \eqref{eq-eta-inter} as
  \begin{multline}\label{eq-eta-inter-2}
 \partial_s \xi - L (\xi) +\frac{\dot\mu }{\mu} \xi + \left(\int_{0}^y \eta(s,  \bar y) \,d\bar y\right) \partial_y \xi -\left(2\beta+\xi \right)\xi +  \left(\int_0^y \eta(s,  \bar y)\,d\bar y\right)\partial_y \beta- \mathcal{N}_1\\
 +\left(\dot\alpha_{-1}+\alpha_{-1}+\frac{\dot\mu}{\mu}\alpha_{-1}+\mathcal P(\xi)+\mathcal Q(\eta)+-\mathcal{Q}_1(\boldsymbol{\alpha})\right)\varphi_{-1}+\frac{\dot\mu}{\mu} \varphi_0\\
 +\left(\dot\alpha_{1}-\alpha_{1}+\frac{\dot\mu}{\mu}\alpha_{1}+\mathcal P (\xi)+\mathcal Q (\eta)\right)\varphi_{1}=0
\end{multline}
for $y\in \Omega^*(s)$. Since now all nonlinear terms on the  first line  of \eqref{eq-eta-inter-2} are $O(|y|^3)$ near the origin, we will choose the time evolution of the parameters $\mu$ and $\boldsymbol{\alpha}$ to cancel the projections on $\textup{Span}(\{\varphi_{i}\}_{i=-1,0,1})$ appearing on the second and third lines. This leads to the following modulation equations
\begin{align}
\label{modulation-mu} & \frac{\dot\mu}{\mu}= 0,\\
 \label{modulation-alpha-1} &\dot\alpha_{-1}=-\alpha_{-1}-\mathcal P(\xi)-\mathcal Q(\eta)+\mathcal Q_1(\boldsymbol{\alpha}), \\
  \label{modulation-alpha1} &\dot\alpha_{1}=\alpha_{1}-\mathcal P (\xi)-\mathcal Q (\eta).
\end{align}

Consequently,   equation \eqref{eq-eta-inter-2} becomes  
\begin{multline}\label{eq-xi}
 \partial_s \xi - L (\xi) + \left(\int_{0}^y \eta(s,  \bar y) \,d\bar y\right) \partial_y \xi -\left(2 \beta+\xi \right)\xi\\
 + \left(\int_{0}^y \eta (s, \bar y) \,d\bar y\right) \partial_y \beta- \mathcal{N}_1(s, y)=0,
\end{multline}
for $y\in\Omega^*.$
The system of the PDE \eqref{eq-xi} coupled with the four ODEs \eqref{eq-x*}, \eqref{modulation-mu}-\eqref{modulation-alpha-1}-\eqref{modulation-alpha1}  is supplemented by the choice of initial data
\begin{equation}\label{definition-xi0}
\left\{
\begin{aligned}
&x^*(0)=x^*_0, \qquad \mu(0)=\mu_0, \qquad \alpha_{-1}(0)=:\alpha_{-1,0}, \qquad  \alpha_{1}(0)=:\alpha_{1,0},\\
& \xi(0,y)=:\xi_0(y)=\frac{1}{\mu_0}a_0(x_0^*+y)-\cos(y)- \alpha_{-1,0}\varphi_{-1}(y)-\alpha_{1,0} \varphi_1(y),
\end{aligned}\right.
\end{equation}
where we used \eqref{definition-eta0} for the last identity.

\subsection{Solving the first two modulation equations} 

The values for the parameters $\mu$ and $\alpha_{1}$ can be obtained explicitly:

\begin{lem} \label{lem:mu-alpha1}

We have
\begin{align} \label{solution-modulation-mu}
& \mu(s)=\mu_0 ,\\
  \label{solution-modulation-alpha1}
& \alpha_1(s)=-\frac{1}{4\pi}\int_{-\pi}^\pi \xi(s,y)dy ,
\end{align}
for all times $s$. In particular, if $|x^*(s)|$ is small enough,
\begin{equation}  \label{solution-modulation-alpha1-bis}
| \alpha_1(s)|\lesssim \| \xi\|_{L^\infty(\Omega^*(s))}+|x^*||\alpha_{-1}(s)|
\end{equation}
\end{lem}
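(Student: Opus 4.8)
The plan is to treat the three assertions separately and to observe that none of them requires analyzing the evolution equation \eqref{eq-xi} for $\xi$: they follow from the modulation setup of Subsection \ref{subs4.3} together with the conservation of the mean-free condition. The identity \eqref{solution-modulation-mu} is immediate, since the modulation equation \eqref{modulation-mu} reads $\dot\mu/\mu=0$, so that $s\mapsto\log\mu(s)$ is constant and the initial condition $\mu(0)=\mu_0$ forces $\mu(s)=\mu_0$ for every $s$.

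For \eqref{solution-modulation-alpha1} I would not integrate the modulation ODE \eqref{modulation-alpha1} directly, but rather exploit the preserved mean-free condition. Since \eqref{bc} is conserved in time and the change of variables \eqref{self-similar-variables} transports it into \eqref{bc-renormalized-variables}, subtracting $\cos y$ gives $\int_{-\pi}^\pi\eta(s,y)\,dy=0$ for all $s$, which is precisely \eqref{bc-renormalized-variables-2}. Inserting the decomposition $\eta=\xi+\alpha_{-1}\varphi_{-1}+\alpha_1\varphi_1$ and integrating over the fixed interval $[-\pi,\pi]$, the mean values recorded in \eqref{mean-eigenfunctions}, namely $\int_{-\pi}^\pi\varphi_{-1}(y)\,dy=0$ and $\int_{-\pi}^\pi\varphi_1(y)\,dy=4\pi$, yield
$$
0=\int_{-\pi}^\pi\xi(s,y)\,dy+4\pi\,\alpha_1(s),
$$
which is exactly \eqref{solution-modulation-alpha1}. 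It is worth emphasizing that one must integrate over the fixed interval $[-\pi,\pi]$ and use the means of $\varphi_{\pm1}$ there, since $\xi$ is not $2\pi$-periodic and the moving domain $\Omega^*(s)$ would give different values.

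Finally, I would derive the bound \eqref{solution-modulation-alpha1-bis} by combining \eqref{solution-modulation-alpha1} with the $L^1$ estimate \eqref{bd:xi-L1--pi-pi}. This gives
$$
|\alpha_1(s)|=\frac{1}{4\pi}\left|\int_{-\pi}^\pi\xi(s,y)\,dy\right|\leq\frac{1}{4\pi}\|\xi\|_{L^1([-\pi,\pi])}\lesssim\|\xi\|_{L^\infty(\Omega^*(s))}+|x^*|\,|\boldsymbol{\alpha}|,
$$
and bounding $|\boldsymbol{\alpha}|\leq|\alpha_{-1}|+|\alpha_1|$ produces $|\alpha_1|\lesssim\|\xi\|_{L^\infty(\Omega^*(s))}+|x^*|(|\alpha_{-1}|+|\alpha_1|)$. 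The one point requiring a little care — and the closest thing to an obstacle in an otherwise routine argument — is the self-referential term $|x^*|\,|\alpha_1|$ on the right-hand side: for $|x^*(s)|$ small enough its coefficient is at most $\tfrac12$, so it can be absorbed into the left-hand side, leaving $|\alpha_1|\lesssim\|\xi\|_{L^\infty(\Omega^*(s))}+|x^*|\,|\alpha_{-1}|$, which is \eqref{solution-modulation-alpha1-bis}.
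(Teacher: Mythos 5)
Your proof is correct and follows essentially the same route as the paper: $\mu(s)=\mu_0$ from $\dot\mu/\mu=0$, the identity for $\alpha_1$ from the conserved mean-free condition \eqref{bc-renormalized-variables-2} together with the means \eqref{mean-eigenfunctions}, and the bound \eqref{solution-modulation-alpha1-bis} from \eqref{bd:xi-L1--pi-pi}. Your explicit absorption of the $|x^*|\,|\alpha_1|$ term for $|x^*|$ small is a detail the paper leaves implicit, and it is handled correctly.
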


The fact that the parameter $\mu$ is time-independent is a consequence of the monotonicity and conserved quantities along characteristics starting at the maximum of the function, see Corollary \ref{cor:propagation-maxima}, and of $\partial_y^2 \varphi_{-1}(0)=0$ and $\partial_y^2 \varphi_{1}(0)=0,$ % \cpcomment{`(if not, one  needs to use $\frac{\mu_0}{\mu}\varphi_0$ term to cancel any $O(y^2)$ term)' I suggest to remove this. CC: I agree to remove this.},
 see \eqref{taylor-origin-eigenfunctions}.

As for the expression \eqref{solution-modulation-alpha1} for $\alpha_1$, it is crucial for our analysis. Indeed, the first two terms in the modulation equation \eqref{modulation-alpha1} produce $\dot\alpha_{1}=\alpha_1$ which would predict instability by exponential growth. Therefore, without further information on the for\-cing terms $-\mathcal P (\xi)$ and $-\mathcal Q (\eta)$ in \eqref{modulation-alpha1}, one would believe that $\alpha_1$ grows exponentially fast. This is however not the case since $\alpha_{1}$ can be retrieved from $\alpha_{-1}$ and $\xi$ via the zero-mean condition \eqref{bc}.

\begin{proof}[Proof of Lemma \ref{lem:mu-alpha1}]

The identity \eqref{solution-modulation-mu} follows directly from \eqref{modulation-mu}. Then, one computes by \eqref{bc-renormalized-variables-2} and \eqref{mean-eigenfunctions}
$$
0=\frac{1}{2\pi} \int_{-\pi}^\pi \eta(y)dy=\frac{1}{2\pi} \int_{-\pi}^\pi \xi(y)dy+2\alpha_1
$$
and the desired identity follows. The estimate \eqref{solution-modulation-alpha1-bis} then follows from \eqref{bd:xi-L1--pi-pi}.
\end{proof}

 %%%%%%%%%%%%%%%%%%%%%%%%%%%%%%%%%%%%%%%%%%%%%%%%

 \subsection{Decomposition of the initial data}
 
 We choose $x_0^*$ to be the point at which $a_0$ attains its maximum. Then, in view of \eqref{modulation-equation-x*}, by Lemma \ref{lem:propagation-maximum} the point $x=x^*(t)$ %\cpcomment{Here it is important that $\mu(s)=\mu_0$ so that $s=\mu_0t$; right ? $\mu_0\neq 1$ is ok, right ? CC: right!}
  corresponding to $y=0$ is where $a(t,\cdot)$ attains its maximum. The point $x_0^*$ is uniquely determined by the following straightforward result from differential calculus:

\begin{lem} \label{lem:characterization-x0*}

For $\delta>0$ small enough, if $a_0(x)=\cos(x)+\tilde a_0(x)$ with $\| \tilde a_0\|_{C^2([-\pi,\pi])}\leq \delta$, then there exists a unique point $x^*_0$ at which $a_0$ attains its maximum, and it satisfies $|x^*_0|\leq C_0 \|   \tilde a'_0\|_{L^\infty([-\pi,\pi])}$,  for some universal positive constant $C_0.$

\end{lem}

Then, we choose $\mu_0$, $\alpha_{-1,0}$ and $\alpha_{1,0}$ to ensure that $\xi_0$ satisfies the vanishing conditions $\xi_0(0)= \xi'_0(0)=\xi''_0(0)=0$.

\begin{lem} \label{lem:initial-data}
For $\delta>0$ small enough, let $a_0(x)=\cos(x)+\tilde a_0(x)$ with $\| \tilde a_0\|_{C^3([-\pi,\pi])}\leq \delta$, and $x^*_0$ be given by Lemma \ref{lem:characterization-x0*}. Then the choice
$$
\mu_0=-\partial_x^2 a_0(x_0^*), \qquad \alpha_{-1,0}=1+\frac{a_0(x_0^*)}{\partial_x^2 a_0(x_0^*)}, \qquad \alpha_{1,0}=0
$$
ensures
$$
|\mu_0-1|\leq C_0 \delta \qquad {\rm and} \qquad |\alpha_{-1,0}|\leq C_0 \delta
$$
and that $\xi_0$ given by \eqref{definition-xi0} satisfies
\begin{equation} \label{taylor-origin-initial-xi}
\xi_0(0)=\partial_y \xi_0(0)=\partial_y^2 \xi_0(0)=0
\end{equation}
and
$$
\| \xi_0\|_{L^\infty([-\pi-x_0^*,\pi-x_0^*])}+\sum_{j=1}^3 \| \partial_y^j \eta_0 \|_{L^\infty([-\pi-x_0^*,\pi-x_0^*])}\leq C_0 \delta,
$$
for some universal positive constant $C_0.$
\end{lem}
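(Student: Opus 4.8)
The plan is to reduce everything to the low-order Taylor jet at the origin, since the choices $\mu_0=-\partial_x^2 a_0(x_0^*)$, $\alpha_{-1,0}=1+\tfrac{a_0(x_0^*)}{\partial_x^2 a_0(x_0^*)}$ and $\alpha_{1,0}=0$ are tuned precisely to it. Writing $\eta_0(y)=\frac1{\mu_0}a_0(x_0^*+y)-\cos y$ as in \eqref{definition-eta0}, the vanishing of $\alpha_{1,0}$ reduces \eqref{definition-xi0} to $\xi_0=\eta_0-\alpha_{-1,0}\varphi_{-1}$, so I only ever track $\eta_0$ and $\varphi_{-1}$. I would record at the outset three facts: (i) by Lemma~\ref{lem:characterization-x0*}, $x_0^*$ is an \emph{interior} maximum of $a_0$ with $|x_0^*|\le C_0\delta$, hence $a_0'(x_0^*)=0$; (ii) from $\varphi_{-1}=-2\cos y+1-y\sin y$ one has $\varphi_{-1}(0)=-1$ and $\varphi_{-1}'(0)=\varphi_{-1}''(0)=0$, consistent with \eqref{taylor-origin-eigenfunctions}; and (iii) $\mu_0=\cos x_0^*-\partial_x^2\tilde a_0(x_0^*)=1+O(\delta)$, which in particular keeps $\mu_0\ge\tfrac12$, so that division by $\mu_0$ is harmless.

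First I would verify the vanishing conditions \eqref{taylor-origin-initial-xi}. Differentiating $\eta_0$ gives $\eta_0(0)=\frac{a_0(x_0^*)}{\mu_0}-1$, then $\eta_0'(0)=\frac{a_0'(x_0^*)}{\mu_0}=0$ by (i), and $\eta_0''(0)=\frac{a_0''(x_0^*)}{\mu_0}+1=0$ by the very definition of $\mu_0$. Subtracting $\alpha_{-1,0}\varphi_{-1}$ and invoking (ii), the conditions $\xi_0'(0)=\xi_0''(0)=0$ are immediate because $\varphi_{-1}'(0)=\varphi_{-1}''(0)=0$, while $\xi_0(0)=\eta_0(0)+\alpha_{-1,0}=\frac{a_0(x_0^*)}{\mu_0}-1+\big(1+\tfrac{a_0(x_0^*)}{\partial_x^2a_0(x_0^*)}\big)$ vanishes exactly because $\tfrac{a_0(x_0^*)}{\partial_x^2a_0(x_0^*)}=-\tfrac{a_0(x_0^*)}{\mu_0}$. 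This is the single place where the precise algebraic values of $\mu_0$ and $\alpha_{-1,0}$ are needed.

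Next I would establish the size bounds. Fact (iii) already gives $|\mu_0-1|\le|\cos x_0^*-1|+|\partial_x^2\tilde a_0(x_0^*)|\le\tfrac12(C_0\delta)^2+\delta\le C_0\delta$. For $\alpha_{-1,0}$ I would write $\alpha_{-1,0}=1-\frac{a_0(x_0^*)}{\mu_0}=\frac{\mu_0-a_0(x_0^*)}{\mu_0}$ and expand $\mu_0-a_0(x_0^*)=(\cos x_0^*-\partial_x^2\tilde a_0(x_0^*))-(\cos x_0^*+\tilde a_0(x_0^*))=-\tilde a_0(x_0^*)-\partial_x^2\tilde a_0(x_0^*)=O(\delta)$, which with $\mu_0\ge\tfrac12$ yields $|\alpha_{-1,0}|\le C_0\delta$.

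Finally, for the $L^\infty$ bounds I would compare each quantity with its unperturbed counterpart. For $1\le j\le 3$, $\partial_y^j\eta_0(y)=\frac1{\mu_0}a_0^{(j)}(x_0^*+y)-(\cos)^{(j)}(y)$; splitting $a_0=\cos+\tilde a_0$ and writing $\frac1{\mu_0}(\cos)^{(j)}(x_0^*+y)-(\cos)^{(j)}(y)=(\tfrac1{\mu_0}-1)(\cos)^{(j)}(x_0^*+y)+\big[(\cos)^{(j)}(x_0^*+y)-(\cos)^{(j)}(y)\big]$, the first term is $O(\delta)$ by (iii) and the second is $O(|x_0^*|)=O(\delta)$ since $(\cos)^{(j)}$ is $1$-Lipschitz, while $\frac1{\mu_0}\tilde a_0^{(j)}(x_0^*+y)$ is controlled by $\|\tilde a_0\|_{C^3}\le\delta$. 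The same telescoping bounds $\|\eta_0\|_{L^\infty}$, and then $\|\xi_0\|_{L^\infty}\le\|\eta_0\|_{L^\infty}+|\alpha_{-1,0}|\,\|\varphi_{-1}\|_{L^\infty}\le C_0\delta$ as $\varphi_{-1}$ is a fixed bounded function. I do not expect a genuine obstacle: the lemma is bookkeeping of Taylor jets, and the only points requiring care are that $x_0^*$ be interior so that $a_0'(x_0^*)=0$, and that $\mu_0$ stay bounded away from zero—both guaranteed by taking $\delta$ small.
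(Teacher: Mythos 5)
Your proof is correct and takes the same route as the paper, which simply asserts the lemma is a direct consequence of \eqref{definition-xi0}, the expansions \eqref{taylor-origin-eigenfunctions}, and the Taylor expansion of $a_0$ at $x_0^*$; you have merely written out the computations (the cancellation $\eta_0(0)+\alpha_{-1,0}=0$, the vanishing of $\varphi_{-1}'(0)$ and $\varphi_{-1}''(0)$, and the telescoping for the $L^\infty$ bounds) that the paper leaves implicit. No gaps.
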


We remark that the function $\eta_0$, considered as a function on $\mathbb R$ obtained by $2\pi$-periodic extension, may not be differentiable at $-\pi-x_0^*$ and $\pi-x_0^*$, but that it is indeed $C^3$ on $[-\pi-x_0^*,\pi-x_0^*]$ that corresponds to the assumption $a_0\in  C^3([-\pi, \pi])$.

\begin{proof}

This is a direct consequence of the identities \eqref{definition-xi0} and \eqref{taylor-origin-eigenfunctions} together with expansion for $a_0(x)$ at $x_0^*.$
\end{proof}

  %%%%%%%%%%%%%%%%%%%%%%%%%%%%%%%%%%%%%%%%%%%%%%%%

 \section{Nonlinear stability}\label{S5}

% \comment{CC: We here need to bootstrap estimates for system \eqref{eq-x*}-\eqref{modulation-alpha-1}-\eqref{eq-xi} with initial condition given by Lemma \ref{lem:initial-data}. The values of $\mu$ and $\alpha_1$ are already given by Lemma \ref{lem:mu-alpha1}.}
 
We are ready to prove Theorem \ref{Th-main},  that is the global-in-time nonlinear stability of the steady state $\cos x$ for \eqref{eq-main}.  To do that,   we need  at least a short time  local wellposedness  result for \eqref{eq-main} in H\"older spaces and  a regularity criterion in terms of $\|a\|_{L^1(0,  t;  L^\infty([-\pi, \pi ])}.$

\subsection{Local well-posedness}

We recall that the weight $W_\theta$ is defined by \eqref{id:def-Wtheta}.

\begin{prop}\label{LWP}
Let $C_0$ be as in Lemma \ref{lem:initial-data}. Suppose that \eqref{Th-icmain} is satisfied for $\delta>0$ small enough. Then the problem
\eqref{eq-main}  admits a unique local-in-time $C^3(\Omega)$ solution $a,$ such that it holds
 \begin{align}
  \left\|\frac{\xi(s, \cdot)}{W_\theta}\right\|_{L^\infty(\Omega^*)}\leq& 2C_0 \delta, \label{bd:bootstrap1}\\
    |x^*(s)|\leq& 2C_0 \delta, \label{bd:bootstrap2}\\
|\boldsymbol{\alpha}(t)|\leq& 2C_0\delta,\label{bd:bootstrap3}
 \end{align}
 on a short time interval $[0,t_0]$, where $(\xi,  x^*,   \boldsymbol\alpha)$ is the corresponding solution of the problem \eqref{eq-eta}-\eqref{definition-eta0} with initial data given by Lemmas \ref{lem:characterization-x0*} and \ref{lem:initial-data}.

Moreover,   the $ C^3$ regularity of any general solution $a$ in any closed interval $[0, T]$ is controllable by
\begin{equation}\label{BKM}
\|a(t,   \cdot)\|_{ C^3(\Omega)}\leq (\|a_0\|_{ C^3(\Omega)}+\|a_0\|_{ C^3(\Omega)}^3)(t+1)^2\,e^{C\int_0^t \|a(\bar t,  \cdot)\|_{L^\infty(\Omega)}\,d\bar t}\quad {\rm for}~t\in[0, T].
\end{equation}

 \end{prop}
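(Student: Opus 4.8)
The plan is to prove Proposition \ref{LWP} in two logically separate parts, corresponding to its two assertions: first the local existence, uniqueness and regularity of a $C^3$ solution together with the bootstrap bounds \eqref{bd:bootstrap1}--\eqref{bd:bootstrap3} for small data, and second the a priori control \eqref{BKM} of the $C^3$ norm by the integral of the $L^\infty$ norm. For the existence part, I would work directly with the nonlocal transport equation \eqref{eq-main}. The natural scheme is a Picard/fixed-point iteration in the space $C([0,t_0];C^3(\Omega))$: given an approximation $a^{(n)}$, define the velocity field $b^{(n)}(t,x)=\int_{-\pi}^x a^{(n)}(t,\bar x)\,d\bar x$, solve the linear transport equation $\partial_t a^{(n+1)}+b^{(n)}\partial_x a^{(n+1)} = (a^{(n+1)})^2-\frac{1}{\pi}\int_{-\pi}^\pi (a^{(n)})^2\,dx$ along the characteristics of $b^{(n)}$, and show that for $t_0$ small this map is a contraction on a ball of $C([0,t_0];C^3(\Omega))$. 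The boundary field vanishes at $x=\pm\pi$, so the characteristics keep $[-\pi,\pi]$ invariant and the mean-free condition \eqref{bc} is preserved; one checks this propagates through the iteration. Uniqueness follows from a Gr\"onwall estimate on the difference of two solutions in, say, $C^1$, using that the quadratic nonlinearity and the nonlocal term are locally Lipschitz on bounded sets of $C^1$.

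Once a short-time $C^3$ solution is in hand, the bootstrap bounds \eqref{bd:bootstrap1}--\eqref{bd:bootstrap3} follow by transferring the solution to the renormalized variables of Section \ref{S4} and invoking continuity in time. By Lemma \ref{lem:initial-data} the initial data satisfy $\|\xi_0/W_\theta\|_{L^\infty}\le C_0\delta$, $|x^*_0|\le C_0\delta$ and $|\boldsymbol\alpha(0)|\le C_0\delta$ (the weighted bound using the vanishing \eqref{taylor-origin-initial-xi} to order three and $W_\theta=O(|x|^3)$). Since $\xi$, $x^*$ and $\boldsymbol\alpha$ depend continuously on $s$ through the evolution \eqref{eq-xi}, \eqref{eq-x*}, \eqref{modulation-alpha-1}--\eqref{modulation-alpha1}, and all the bounds hold with a strict factor of $2$ at $s=0$, a standard continuity argument yields a (possibly shorter) time interval $[0,t_0]$ on which \eqref{bd:bootstrap1}--\eqref{bd:bootstrap3} hold.

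For the a priori estimate \eqref{BKM}, the plan is a Beale--Kato--Majda--type argument adapted to this equation. Differentiating \eqref{eq-main} up to three times gives transport equations for $\partial_x a$, $\partial_x^2 a$, $\partial_x^3 a$ along the characteristics of the field $b(t,x)=\int_{-\pi}^x a\,d\bar x$; the commutator of the nonlocal transport with $\partial_x^j$ produces lower-order terms involving $\partial_x b = a$ and products of lower derivatives of $a$. Evaluating along characteristics and using $\|\partial_x b\|_{L^\infty}=\|a\|_{L^\infty}$, one obtains a differential inequality of the schematic form $\frac{d}{dt}\|a(t)\|_{C^3}\lesssim \|a(t)\|_{L^\infty}\,\|a(t)\|_{C^3}+(\text{lower order})$, where the quadratic term $a^2$ and the nonlocal term $\frac1\pi\int a^2$ contribute the extra polynomial-in-$t$ factor $(t+1)^2$ after integrating the inhomogeneous pieces. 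Applying Gr\"onwall's lemma then gives exactly \eqref{BKM}, the exponential being $\exp\big(C\int_0^t\|a\|_{L^\infty}\big)$.

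I expect the main obstacle to be twofold. The first difficulty is the careful tracking of how the \emph{nonlocal} transport term $b=\int_{-\pi}^x a$ interacts with successive $x$-derivatives: unlike a local transport, $b$ gains a derivative compared to $a$, but $\partial_x b=a$ is only controlled in $L^\infty$, so one must verify that the highest-order commutator term is genuinely of the form $\|a\|_{L^\infty}\|\partial_x^3 a\|_{L^\infty}$ rather than requiring control of $\partial_x^3 b$, which would lose a derivative. The second, more technical, point is the precise bookkeeping that produces the \emph{polynomial} prefactor $(t+1)^2$ and the combination $\|a_0\|_{C^3}+\|a_0\|_{C^3}^3$: this comes from iterating the inhomogeneous estimate, where the cubic term reflects the quadratic nonlinearity feeding into a first-order-derivative equation over two integration steps. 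These are all routine once the commutator structure is organized correctly, but getting the exact exponents in \eqref{BKM} requires some care.
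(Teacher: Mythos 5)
Your proposal is correct and follows essentially the same route as the paper: local existence via the characteristics/iteration method (which the paper treats as classical, citing \cite{CCGO}), the bootstrap bounds by continuity in time from the initial bounds of Lemma \ref{lem:initial-data}, and \eqref{BKM} by differentiating the equation three times and integrating the resulting ODEs along characteristics, with the commutator only ever producing $\partial_x b=a$ measured in $L^\infty$, exactly as you anticipate. One small correction of attribution: the nonlocal term $\frac1\pi\int_{-\pi}^\pi a^2\,dx$ is constant in $x$ and drops out upon differentiation, so the $(t+1)^2$ prefactor and the cubic dependence on $\|a_0\|_{C^3}$ come solely from integrating the source terms $(\partial_x a)^2$ and $2\,\partial_x a\,\partial_x^2 a$ in the transported equations for $\partial_x^2 a$ and $\partial_x^3 a$ --- precisely the hierarchical bookkeeping you describe at the end.
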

 
 \begin{proof}
 The local-in-time well-posedness of Equation \eqref{eq-b} for any initial data $b_0\in H^2$ is proved in \cite{CCGO} via vanishing viscosity limit.    
  We remark that  Equation  \eqref{eq-main} is a transport type equation and to work out an existence   result   in the H\"older function spaces it is    convenient to use the characteristics method to avoid loss of derivatives.    Since the argument is quite classical,  the details are omitted. 
We would like to pay attention to the proof of the regularity criterion  \eqref{BKM}.  Define the characteristics associated to Equation \eqref{eq-main} by
 \begin{equation*} 
\frac{d X(t, z)}{dt}=\int_{-\pi}^{X(t, z)} a(t,  \bar z)\,d\bar z,\qquad X(t, z)|_{t=0}=z\in \Omega,
\end{equation*}
and  $\bar{a}(t, z)=a(t, X(t, z)),$  then the Jacobian $J(t, z)=\frac{d X}{dz}=\exp(\int_0^t\bar{a}(\bar t, z)\,d\bar{t})$ and
the equation of $(\partial_x a)(t, X(t, z)),~(\partial^2_x a)(t, X(t, z)),~(\partial^3_x a)(t, X(t, z))$ respectively read
\begin{align*}
& \frac{d  }{dt} (\partial_x a)(t, X(t, z))=\bar{a} ~(\partial_x a)(t, X(t, z)),\\
 & \frac{d  }{dt} (\partial^2_x a)(t, X(t, z))= \left((\partial_x a)(t, X(t, z))\right)^2,\\
  &\frac{d  }{dt} (\partial^3_x a)(t, X(t, z))=2  (\partial_x a)(t, X(t, z))~ (\partial^2_x a)(t, X(t, z))-\bar{a} ~(\partial^3_x a)(t, X(t, z)),
\end{align*}
In the case when $\int_0^t \|a(\bar t, \cdot)\|_{L^\infty(\Omega)}\,d\bar{t}$ is bounded on $[0, T],$ we find that
\begin{align*}
&\|\partial_x a(t, \cdot)\|_{L^\infty(\Omega)}=\|(\partial_x a)(t, X(t, \cdot))\|_{L^\infty(\Omega)}\leq \|\partial_x a_0\|_{L^\infty(\Omega)}\exp\left(\int_0^t \|a(\bar t, \cdot)\|_{L^\infty(\Omega)}\right),\\
&\|\partial^2_x a(t, \cdot)\|_{L^\infty(\Omega)}\leq  \|a_0\|_{ C^2(\Omega)} + \int_0^t \|(\partial_x a)(\bar t, X(t, \cdot))\|_{L^\infty(\Omega)}^2\,d\bar t,\\
&\|\partial^3_x a(t, \cdot)\|_{L^\infty(\Omega)}\leq  \left(\|a_0\|_{ C^3(\Omega)}  + 2\int_0^t \|\partial_x a (\bar t,  \cdot)\|_{L^\infty(\Omega)}  \,\|\partial^2_x a (\bar t,  \cdot)\|_{L^\infty(\Omega)}   \,d\bar t\right)\\
&\hspace*{4cm}\cdot\exp\left(\int_0^t \|a(\bar t, \cdot)\|_{L^\infty(\Omega)}\right).
\end{align*}
Hence
\begin{equation*}
\|a(t, \cdot)\|_{ C^3(\Omega)} \leq  (\|a_0\|_{ C^3(\Omega)}+\|a_0\|_{ C^3(\Omega)}^3) (t+1)^2\exp\left(C\int_0^t \|a(\bar t, \cdot)\|_{L^\infty(\Omega)}\right).\qedhere
\end{equation*}
 \end{proof}

 \subsection{Bootstrap argument}

At this stage,  we are going to apply the  linear decay estimates from Section \ref{sec3} and a bootstrap argument to show that the nonlinear stability holds for all time.
   We denote  $T^\infty$  to be the largest time such that the solution $a(t,  \cdot)\in C^3(\Omega)$ obtained from Proposition \ref{LWP} exists.  Hence to prove Theorem \ref{Th-main},  we only need to show that $T^\infty=\infty$ and that \eqref{a-behavior} holds.  Now let $C_1$ be a  positive constant,  which will be determined later on,  we define  
   \begin{multline}\label{def-T*}
   T^*=\sup \left\{s\in[0, T^\infty):
   \left\| \frac{\xi(s,  \cdot)}{W_\theta}\right\|_{L^\infty(\Omega^*(s))} + |x^*(s)|+|\boldsymbol{\alpha}(s)|\leq  C_1 \delta  ~e^{-(1-\theta')s}\right \}
   \end{multline}
    
    In what follows, we shall prove that ${T^*=T^\infty}=\infty$ under the assumption \eqref{Th-icmain}.   By contradiction,   let us suppose that ${T^*<\infty}.$   %\cpcomment{I changed $T^\infty$ to $T^*$}
   Keeping the notations of Proposition \ref{Prop-quasi},   it enables one to  write  the solution $\xi$ to \eqref{eq-xi} obtained from Proposition \ref{LWP} as 
    \begin{equation}\label{Dformula-xi}
    \xi= S(s, 0)\xi_0+ \int_0^s S(s,  \bar s) F(\bar s, y)\,d\bar s \quad{\rm for}~~s\in[0, T^*],
    \end{equation}
    where 
    \begin{equation*} 
F(s, y)= \left(2 \beta+\xi \right)\xi-
   \left(\int_{0}^y \eta(s, \bar y) \,d\bar y\right) \partial_y \beta+ \mathcal{N}_1(s, y).
\end{equation*}
   Applying  Proposition \ref{Prop-quasi} to \eqref{Dformula-xi} implies that 
    
   \begin{align}\label{es-xi}
 \left\|\frac{\xi(s, \cdot)}{W_\theta} \right\|_{L^\infty(\Omega^*(s))}\leq&  \left\|\frac{S(s, 0)\xi_0}{W_\theta}\right\|_{L^\infty(\Omega^*(0))}+ \int_0^s  \left\|\frac{S(s,  \bar s) F(\bar s,  \cdot)}{W_\theta} \right\|_{L^\infty(\Omega^*(s))}\,d\bar s\nonumber\\
   \leq&  e^{-(1-\theta')s}\left\|\frac{ \xi_0}{W_\theta}\right\|_{L^\infty(\Omega^*(0))} +\int_0^s e^{-(1-\theta')(s-\bar s)} \left\|\frac{ F(\bar s, \cdot)}{W_\theta}\right\|_{L^\infty(\Omega^*(\bar s))}\,d\bar s.
    \end{align}
%Recall the definition of $W_\theta$ given in Proposition \ref{Prop2.1},  
We can estimate each term of  $F(\bar s, y)$ on $[0, s]$ as follows.
\begin{align*}
\left\|\frac{\left( 2\beta+\xi \right)\xi(\bar s, \cdot)}{W_\theta}\right\|_{L^\infty(\Omega^*(\bar s))}\lesssim \left(|\boldsymbol\alpha(s)| +\|\xi(s, \cdot)\|_{L^\infty(\Omega^*(\bar s))}\right)\left\|\frac{\xi(\bar s, \cdot)}{W_\theta}\right\|_{L^\infty(\Omega^*(\bar s))}
\end{align*}
    and in virtue of  \eqref{taylor-origin-eigenfunctions}
    \begin{align*}
  \left\|\frac{\left(\int_{0}^y \eta(\bar s, \bar y) \,d\bar y\right) \partial_y \beta}{W_\theta}\right\|_{L^\infty(\Omega^*(\bar s))}   &\lesssim  \|\eta(\bar s,  \cdot) \|_{L^\infty(\Omega^*(\bar s))} ~ \left\|\frac{y\partial_y \beta}{W_\theta}\right\|_{L^\infty(\Omega^*(\bar s))}\\
  &\lesssim |\boldsymbol\alpha(\bar s)|~ \|\eta(\bar s,  \cdot) \|_{L^\infty(\Omega^*(\bar s))},
    \end{align*}
    and of \eqref{taylorexpansionN}
    \begin{equation*}
      \left\|\frac{\mathcal{N}_1(\bar s,  \cdot)}{W_\theta}\right\|_{L^\infty(\Omega^*(\bar s))}\lesssim |\boldsymbol\alpha(\bar s)|^2
    \end{equation*}
    
 Substituting above estimates into \eqref{es-xi} and using of  \eqref{def-T*} gives that
   \begin{multline*} 
   e^{(1-\theta')s} \left\| \frac{\xi(s, \cdot)}{W_\theta}\right \|_{L^\infty(\Omega^*(s))}\leq 
 \left\|\frac{ \xi_0}{W_\theta}\right\|_{L^\infty(\Omega^*(0))}\\
 \qquad  \qquad \qquad+C\int_0^se^{(1-\theta') \bar s} \left( C_1\delta   e^{(1-\theta')\bar s} \left\| \frac{\xi(\bar s,  \cdot)}{W_\theta}\right\|_{L^\infty(\Omega^*(\bar s))}+   (C_1\delta)^2  e^{-2(1-\theta')\bar s} \right)\,d\bar s,
    \end{multline*}
     Gr\"onwall's lemma  then implies that 
     \begin{equation}\label{es-xi2}
 \left \| \frac{\xi(s, \cdot)}{W_\theta} \right\|_{L^\infty(\Omega^*(s))}\leq C e^{\frac{C_1\delta}{1-\theta'}}~\left(C_0 \delta+\frac{(C_1\delta)^2}{1-\theta'} \right)~
   e^{-(1-\theta')s}.
    \end{equation}
    
    It remains to estimate $x^*$ and $\boldsymbol\alpha.$  Notice that $x^*(s)$ is small on $[0, T^*],$  we write the  equation \eqref{eq-x*} of $x^*$ to
    \begin{equation*}
    \frac{d}{ds}\sin(x^*)=-\sin(x^*)+(1-\cos(x^*))\sin(x^*)+\cos(x^*)\int_{-\pi-x^*}^0 \eta(  s,  y)\,dy
    \end{equation*}
    and  find that
    \begin{equation*}
    | \sin(x^*(s))|\leq  \left(|\sin(x^*_0)|e^{-s}+  \int_0^s\int_{-\pi-x^*(\bar s)}^0 \eta( \bar s,  y)\,dy \, d\bar s     \right)~e^{\frac{(C_1\delta)^2}{1-\theta'}}.
    \end{equation*}
    This together with Lemma \ref{lem:characterization-x0*} gives that 
    \begin{equation}\label{es-x*}
|x^*(s)| \leq C\,e^{\frac{(C_1\delta)^2}{1-\theta'}}~ \left(C_0 \delta\,e^{-s}+  \int_0^s  \left( \left\| \frac{\xi(\bar s, \cdot)}{W_\theta} \right\|_{L^\infty(\Omega^*(\bar s))}+|\boldsymbol\alpha(\bar s)|\right)  \, d\bar s   \right)
    \end{equation}
  One can  estimate   $\boldsymbol\alpha$ similarly to get
   \begin{equation}\label{es-alpha}
| \boldsymbol\alpha  (s)| \leq C \,e^{\frac{C_1\delta}{1-\theta'}}~  \left(C_0 \delta \,e^{-s}+  \int_0^s  \left\| \frac{\xi(\bar s, \cdot)}{W_\theta} \right\|_{L^\infty(\Omega^*(\bar s))}  \, d\bar s   \right),
    \end{equation}
   the key point is that $\alpha_{-1}$  satisfies \eqref{modulation-alpha-1} and  $\alpha_1$ is given by \eqref{solution-modulation-alpha1} and satisfies \eqref{solution-modulation-alpha1-bis}.
      At this stage,  since $C, ~C_0, ~\theta, ~\theta'$ are just universal constants,    we can take  $\delta$   small enough  and  fix $C_1$ satisfying
    \begin{equation}
    CC_0 e^{\frac{C_1\delta}{1-\theta'}}<\frac{C_1}{8},\quad Ce^{\frac{C_1\delta}{1-\theta'}}{\frac{C_1\delta}{(1-\theta')^2}}<\frac{1}{8},
    \end{equation}
such that 
   \eqref{es-xi2}  becomes  
       \begin{equation}\label{es-xi3}
  \left\| \frac{\xi(s, \cdot)}{W_\theta}\right \|_{L^\infty(\Omega^*(s))}\leq \frac{C_1\delta}{4}\,e^{-(1-\theta')s}\quad {\rm for}~~s\in[0, T^*].
    \end{equation}
Consequently  \eqref{es-x*} and \eqref{es-alpha} can be rewritten to
       \begin{equation} 
 |x^*(s)| \leq  \frac{C_1\delta}{4}\,e^{-(1-\theta')s},
    \end{equation}
         \begin{equation} 
 |\boldsymbol\alpha(s)| \leq  \frac{C_1\delta}{4}\,e^{-(1-\theta')s},
    \end{equation}
    which contradicts with \eqref{def-T*}.   Hence we conclude that $T^*=\infty,$ and there hold   \eqref{bd:bootstrap1},   \eqref{bd:bootstrap2} and  \eqref{bd:bootstrap3} on $[0,  \infty)$.    Thanks to the regularity criterion \eqref{BKM},  we see that the solution $a(t,  \cdot)\in C^3(\Omega)$ exists globally in time.  We find that $a(t, x)=\mu_0(\cos(y)+\eta(s, y))$ with  $\mu_0$ given in   Lemma \ref{lem:initial-data},    one  gets from  \eqref{bd:bootstrap1},   \eqref{bd:bootstrap2} and  \eqref{bd:bootstrap3} that 
    \begin{equation}
\|a(t, \cdot)-\mu_0 \cos(\cdot)\|_{L^\infty(\Omega)}\lesssim \delta\, e^{-\mu_0(1-\theta')t},\quad \forall~t\geq0.
\end{equation}
    
    There remains to prove the convergence for the derivative in \eqref{a-behavior}. We consider the equation of $\partial_y \eta$ which reads
    \begin{multline}\label{eq-d-eta}
    \partial_s (\partial_y \eta) + \left( \sin (y)+\int_0^y\eta(s, \bar{y})\,d\bar{y}      \right) \partial_y (\partial_y \eta)-(\eta+\cos(y))\partial_y \eta\\=\cos(y)\int_0^y\eta(s, \bar{y})\,d\bar{y}-\sin(y)\,\eta.
    \end{multline}
We claim the following result, for which the proof is similar to the case of Lemma \ref{Le-quasi}.
\begin{lem}\label{lemma:quasilinear-derivative}
Let $\tilde S(s,s_0)u_0$ denote the solution to
$$
  \partial_s u + \left( \sin (y)+\int_0^y \eta (s, \bar{y})\,d\bar{y}      \right) \partial_y u -(\eta+\cos(y))u =0.
$$
Then, for the weight
$$
\omega =\sin^2(y/2)
$$
there holds
$$
\left\|\frac{\tilde{S}(s,s_0)u_0}{\omega}\right\|_{L^\infty{(\Omega^*(s))}}\leq  e^{-(1-\theta')(s-s_0)}~
 \left\|\frac{u_0}{\omega}\right\|_{L^\infty{(\Omega^*_{s_0})}}.
$$
\end{lem}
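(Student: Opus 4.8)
The plan is to mimic the comparison-principle argument of Lemma~\ref{Le-quasi}, now for the linear transport equation
\[
\partial_s u + b\,\partial_y u - c\,u = 0, \qquad b(s,y)=\sin(y)+\int_0^y \eta(s,\bar y)\,d\bar y, \quad c(s,y)=\eta+\cos(y).
\]
As in Section~\ref{sec3}, this equation enjoys a comparison principle along the characteristics $\dot Y = b(s,Y)$: if $w$ satisfies $\partial_s w + b\,\partial_y w - c\,w \ge 0$ and $w\ge 0$ on $\Omega^*_{s_0}$, then $w\ge 0$ on $\Omega^*(s)$, because along a characteristic $\frac{d}{ds} w(s,Y(s)) \ge c\, w(s,Y(s))$ and Grönwall preserves the sign. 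As already noted for \eqref{Flow2}, the endpoints $\pm\pi - x^*(s)$ are themselves characteristics --- this uses the mean-free identity $\int_{-\pi-x^*}^{\pi-x^*}\eta = 0$ --- so the moving domain $\Omega^*(s)$ is transported by the flow and no boundary terms arise.

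The heart of the matter is to check that, for $\delta^*$ small enough, $v := e^{-(1-\theta')(s-s_0)}\omega$ is a supersolution. Writing $\omega=\sin^2(y/2)=\tfrac{1-\cos y}{2}$ so that $\omega'=\tfrac12\sin y$, I would compute
\[
\partial_s v + b\,\partial_y v - c\,v = e^{-(1-\theta')(s-s_0)}\Big[-(1-\theta')\omega + b\,\omega' - c\,\omega\Big].
\]
First I would isolate the $\eta$-independent part $-(1-\theta')\omega + \sin y\,\omega' - \cos y\,\omega$; using $\sin y\,\omega' = \tfrac12\sin^2 y = \omega(1+\cos y)$, the two $\cos y$ contributions cancel exactly and this part collapses to the clean expression $\theta'\omega \ge 0$. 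This algebraic cancellation --- the analogue of the identity $L_0(W_{-1})=-W_{-1}$ used in Lemma~\ref{Le-2.2} --- is what singles out $\sin^2(y/2)$ as the right weight, and it explains why here a weight vanishing only at order $2$ (rather than at order $3$ as for $W_\theta$) suffices: the potential $-c$ provides genuine damping as characteristics are pushed from the origin, where $c\approx 1$, towards $\pm\pi$, where $c\approx -1$.

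It then remains to absorb the $\eta$-dependent remainder $P\,\omega' - \eta\,\omega$, where $P=\int_0^y\eta\,d\bar y$, into the margin $\theta'\omega$. Using $\|\eta\|_{L^\infty}\le \delta^*$ from \eqref{bd:Leq3} gives $|\eta|\,\omega \le \delta^*\omega$ and $|P|\le \delta^*|y|$, while the ratio $|y\sin y|/\sin^2(y/2)$ is bounded by a universal constant on $\Omega^*(s)$ (its value at the origin being $4$ and tending to $0$ at $\pm\pi$). Hence $|P\,\omega' - \eta\,\omega| \le C\delta^*\omega$, and choosing $\delta^*\le \theta'/C$ yields $-(1-\theta')\omega + b\,\omega' - c\,\omega \ge (\theta'-C\delta^*)\omega \ge 0$ throughout $\Omega^*(s)$, including the small end-regions near $\pm\pi$ where $\omega\approx 1$ and $\omega'\approx 0$.

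With the supersolution in hand, I would finish exactly as in Lemma~\ref{Le-quasi}: setting $K=\|u_0/\omega\|_{L^\infty(\Omega^*_{s_0})}$, so that $-K\omega\le u_0\le K\omega$ on $\Omega^*_{s_0}$, the functions $K\, e^{-(1-\theta')(s-s_0)}\omega \pm \tilde S(s,s_0)u_0$ are supersolutions that are nonnegative on $\Omega^*_{s_0}$; by the comparison principle they remain nonnegative on $\Omega^*(s)$, which is precisely the claimed weighted decay estimate. The main obstacle is the supersolution verification itself --- spotting the exact cancellation that produces the positive margin $\theta'\omega$, and controlling the nonlocal term $P\,\omega'$ uniformly up to the origin, where $\omega'/\omega=\cot(y/2)$ is singular but is tamed by the vanishing of $P$; everything else is routine.
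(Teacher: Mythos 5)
Your proposal is correct and is exactly the argument the paper intends: the paper only states that the proof of Lemma \ref{lemma:quasilinear-derivative} is ``similar to the case of Lemma \ref{Le-quasi}'', and your comparison-principle argument with the supersolution $e^{-(1-\theta')(s-s_0)}\omega$ is the natural instantiation of that, with the key cancellation $\sin y\,\omega'-\cos y\,\omega=\omega$ correctly identified and the $\eta$-remainder absorbed via $|y\sin y|\lesssim \sin^2(y/2)$ for $\delta^*$ small. No gaps.
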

With this Lemma we are able to write the solution to \eqref{eq-d-eta} using Duhamel's formula as
$$
\partial_y \eta(s)= \tilde S(s,0)\eta_0 + \int_0^s \tilde S(s,\bar s)\left(\cos(y)\int_0^y\eta(\bar s, \bar{y})\,d\bar{y}-\sin(y)\,\eta(\bar s)\right)d\bar s
$$
and to estimate it by
\begin{align*}
& \| \frac{\partial_y \eta(s)}{\omega}\|_{L^\infty(\Omega^*(s))} \\ 
& \lesssim e^{-(1-\theta')s}~
 \left\|\frac{\partial_y \eta_0}{\omega}\right\|_{L^\infty{(\Omega_0^*)}} + \int_0^s e^{-(1-\theta')(s-\bar s)}\left\| \frac{\cos(y)\int_0^y\eta(\bar s, \bar{y})\,d\bar{y}-\sin(y)\,\eta(\bar s)}{\omega}\right\|_{L^\infty}d\bar s \\
 & \lesssim \delta e^{-(1-\theta')s}+\delta \int_0^s e^{-(1-\theta')(s-\bar s)} e^{-(1-\theta')\bar s}d\bar s \lesssim (1+\bar s)e^{-(1-\theta')s}
\end{align*}
where we used for the first term that $\partial_{yy} \eta_0(0)=0$ by \eqref{taylor-origin-eigenfunctions} and \eqref{taylor-origin-initial-xi} so that $\|\frac{\partial_y \eta_0}{\omega}\|_{L^\infty{(\Omega_0^*)}}<\infty$, and for the second term the bootstrap inequalities \eqref{def-T*}. This ends the proof of \eqref{a-behavior} and of Theorem \ref{Th-main}.

        \qed

   %%%%%%%%%%%%%%%%%%%%%%%%%%%%%%%%%%%%%%%%%%%%%%%%
 
   \section{Nonlinear instability} \label{sec:instability}

Let $\epsilon>0$. Then for any $\sigma>0$ we can find an initial data $a_0\in C^{2-\epsilon}(\Omega)$ that satisfies the following properties, which we will prove below will lead to an instability mechanism.  The function $a_0$ attains its maximum at the origin.  It is even with respect to $x$. It satisfies
$$
\| a_0-\cos (\cdot)\|_{C^{2-\epsilon}(\Omega)}\leq \sigma.
$$ 
Moreover, we have
\begin{equation} \label{initial-data-instability}
a_0(x)= \cos(x) +|x|^{2-\frac{\epsilon}{2}}
\end{equation}
for all $x$ that is close enough to $0$ (depending on $\epsilon$ and $\sigma$). Let us now assume by contradiction that the corresponding solution $a$ is global in time, and that there exists $\mu\in\R\setminus\{0\}$ such that
\begin{equation} \label{contradiciton-hp}
\|a(t, \cdot)-\mu \cos(\cdot)\|_{L^\infty(\Omega)}\to 0.
\end{equation}
In fact,  we can further assume that $\mu>0$ thanks to the fact that  $a_0$ attains its maximum at the origin and Lemma \ref{lem:propagation-maximum}.
We denote by $z(t)$ a general characteristic starting from a general point $z_0$. Since $a_0$ is even, then $a$ remains even over time and the mean free condition is equivalent to $\int_{-\pi}^0 a(t,x)dx=0$ for all $t$. In particular, the characteristics starting from the origin remains at the origin. The next Lemma studies quantitatively the stability of the origin by the characteristics flow.

\begin{lem} \label{lem:technical-instability}

Assume that \eqref{contradiciton-hp} holds true for some $\mu>0.$ Then, for all $\kappa_0>0$ and $\delta\in(0, \frac{1}{\mu})$,  there exists $\kappa\in(0,  \kappa_0)$ such that if initially $|z_0|\leq \kappa$ then
$$
|z(t)|\leq \kappa_0 \qquad \mbox{for all }0\leq t \leq t_0:= \left(\frac{1}{\mu}-\delta\right)\ln \left(\frac{\kappa_0}{|z_0|} \right).
$$

\end{lem}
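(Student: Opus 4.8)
The plan is to show that near the origin the characteristic grows at most like $e^{\mu t}$ to leading order, and that cutting the evolution off at the time $t_0$, which is shorter than $\frac1\mu\ln(\kappa_0/|z_0|)$ by the definite factor $1-\mu\delta$, leaves just enough slack so that $z(t)$ has not yet reached $\kappa_0$, provided $|z_0|$ is chosen small. Since $|z(t)|<\pi$ always (the endpoints $\pm\pi$ are fixed points of the flow, because $\int_0^{\pm\pi}a=0$ by evenness and the mean-free condition), we may assume $\kappa_0\le\pi$. I would first reduce to $z_0>0$: as $a_0$ is even, $a(t,\cdot)$ stays even, so the transport field $b(t,x):=\int_{-\pi}^x a(t,\bar x)\,d\bar x=\int_0^x a(t,\bar x)\,d\bar x$ is odd in $x$, whence $z_0\mapsto|z(t)|$ is invariant under $z_0\mapsto-z_0$. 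Moreover $z\equiv 0$ is a characteristic, so by uniqueness of characteristics (with $b$ of class $C^1$ in $x$) a characteristic with $z_0>0$ stays strictly positive and $|z(t)|=z(t)$.

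Next I would translate the convergence hypothesis \eqref{contradiciton-hp} into a differential inequality. Fix $\epsilon'>0$ to be chosen; there is $T=T(\epsilon')$ with $\|a(t,\cdot)-\mu\cos(\cdot)\|_{L^\infty(\Omega)}\le\epsilon'$ for $t\ge T$. Writing $b(t,z)=\mu\sin z+\int_0^z(a-\mu\cos)\,d\bar x$ gives $|b(t,z)-\mu\sin z|\le\epsilon' z$, so using $\sin z\le z$ for $z\ge0$,
$$
\dot z=b(t,z)\le\mu\sin z+\epsilon' z\le(\mu+\epsilon')z,\qquad t\ge T,\ z\ge0.
$$
On the initial layer $[0,T]$ I would only use the crude bound $|b(t,z)|\le\|a(t,\cdot)\|_{L^\infty}\,z\le Mz$ with $M:=\sup_{[0,T]}\|a(t,\cdot)\|_{L^\infty}<\infty$ (finite since $a$ is continuous in time with values in $L^\infty$), giving $z(t)\le|z_0|e^{MT}$ there.

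The quantitative heart is then a continuity/bootstrap argument on the set $\{z\le\kappa_0\}$. I would choose $\epsilon'$ small enough that
$$
p:=(\mu+\epsilon')\Big(\tfrac1\mu-\delta\Big)<1,
$$
which is possible since $p\to1-\mu\delta<1$ as $\epsilon'\to0^+$, using $\delta<1/\mu$. Let $t_1\in[0,t_0]$ be maximal with $z(s)\le\kappa_0$ for all $s\le t_1$. Combining the two regimes for any $t\le t_1\le t_0$, and substituting $t_0=(\frac1\mu-\delta)\ln(\kappa_0/|z_0|)$, I get
$$
z(t)\le|z_0|e^{MT}e^{(\mu+\epsilon')t_0}=e^{MT}\kappa_0^{\,p}\,|z_0|^{1-p}.
$$
Since $1-p>0$ and $e^{MT}\kappa_0^{\,p}$ is a fixed constant, choosing $\kappa\in(0,\kappa_0)$ so small that $e^{MT}\kappa_0^{\,p}\kappa^{1-p}<\kappa_0$ forces $\sup_{[0,t_1]}z<\kappa_0$; this excludes $z(t_1)=\kappa_0$, hence $t_1=t_0$, and $|z(t)|\le\kappa_0$ on $[0,t_0]$ follows.

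The step I expect to be the main obstacle is precisely this exponent bookkeeping: the convergence only furnishes the growth rate $\mu+\epsilon'$ rather than exactly $\mu$, so one must exploit the slack $\mu\delta$ built into $t_0$ to secure $p<1$, ensuring that the vanishing factor $|z_0|^{1-p}$ beats the fixed transient amplification $e^{MT}$ produced on the initial layer $[0,T]$, on which no decay is available. The remaining ingredients — positivity and non-crossing of characteristics, the Gr\"onwall bounds, and the continuity argument — are routine.
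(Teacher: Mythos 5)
Your proof is correct and follows essentially the same route as the paper: a Gr\"onwall bound $|\dot z|\le \|a(t,\cdot)\|_{L^\infty}|z|$, the splitting of $\int_0^t\|a(t',\cdot)\|_{L^\infty}dt'$ into a bounded transient plus a linear-in-$t$ part with rate $\mu+\gamma$, and the exponent bookkeeping $\left(\frac1\mu-\delta\right)(\mu+\gamma)<1$ to absorb the transient constant by taking $\kappa$ small. The extra steps you add (reduction to $z_0>0$ and the continuity/bootstrap on $\{z\le\kappa_0\}$) are harmless but not needed, since the differential inequality holds on the whole domain.
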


\begin{proof}

Since $\int_{-\pi}^0 a(t,x)dx=0$ for all $t$, the velocity of a characteristics is $\dot z(t)=\int_0^z a(t,x)dx$ and satisfies $|\dot z|\leq |z|\| a(t,\cdot)\|_{L^\infty}$. By \eqref{contradiciton-hp} we have $ \| a(t',\cdot)\|_{L^\infty}\to \mu$ as $t\to \infty$ and we deduce that for all $\gamma>0$, there exists $C(\gamma)>0$ such that
$$
|z(t)|\leq |z_0|e^{\int_0^t \| a(t',\cdot)\|_{L^\infty}dt'}\leq |z_0|e^{C(\gamma)+t(\mu+\gamma)}.
$$
So if $0\leq t \leq  \left(\frac{1}{\mu}-\delta\right)\ln \left(\frac{\kappa_0}{|z_0|} \right)$ and $|z_0|\leq \kappa<\kappa_0$ we get from the above
\begin{align*}
|z(t)|\leq |z_0|e^{C(\gamma)}  \left(\frac{\kappa_0}{|z_0|} \right)^{(\frac{1}{\mu}-\delta)(\mu+\gamma)}=& \kappa_0 e^{C(\gamma)} \left(\frac{|z_0|}{\kappa_0}\right)^{1-(\frac{1}{\mu}-\delta)(\mu+\gamma)}\\
\leq&\kappa_0 e^{C(\gamma)} \left(\frac{\kappa}{\kappa_0}\right)^{1-(\frac{1}{\mu}-\delta)(\mu+\gamma)}
\leq \kappa_0,
\end{align*}
if one chooses first $\gamma$ small enough depending on $\delta,~\mu$, then $\kappa$ small enough depending on $\gamma$. This is the desired result.
\end{proof}

We can now end the proof of the instability theorem.

\begin{proof}[Proof of Theorem \ref{Th-main-2}]

We consider for $z$ a characteristics the difference $a(t,0)-a(t,z(t))$. By \eqref{eq-main} it solves
$$
\frac{d}{dt}\left(a(t,0)-a(t,z(t)) \right)=a^2(t,0)-a^2(t,z(t))=\left(a(t,0)+a(t,z(t)\right)(a(t, 0)-a(t,  z(t)).
$$
The solution to the above differential equation is
$$
a(t,0)-a(t,z(t))= e^{\int_0^t (a(s,0)+a(s,z(s))ds} (a_0(0)-a_0(z_0)).
$$
We now consider $\kappa_0$ and $\delta>0$ small and apply Lemma \ref{lem:technical-instability}, and we pick $z_0$ with $|z_0|\leq \kappa$ as in the statement of the lemma. Up to choosing a smaller $\kappa$, we can assume that \eqref{initial-data-instability} holds true for $z_0$ and the above becomes
$$
a(t,0)-a(t,z(t))= e^{\int_0^t (a(s,0)+a(s,z(s))ds} |z_0|^{2-\frac{\epsilon}{2}}.
$$
For times $t\leq t_0$ we have $|z(t)|\leq \kappa_0$, and we recall that $a(t, \cdot)\to \mu \cos(\cdot)$ in $L^\infty$ as $t\to \infty$ by the contradiction assumption \eqref{contradiciton-hp}. Combining these two informations, for any $\gamma>0$ we see that there exists a constant $C(\gamma)>0$ such that for all $0\leq t\leq t_0$ we have
$$
\int_0^t \left(a(s,0)+a(s,z(s))\right)ds\geq (2\mu-\gamma)t  -C(\gamma).
$$
Thus, $e^{\int_0^t (a(s,0)+a(s,z(s)))ds}\geq e^{-C(\gamma)} e^{(2\mu-\gamma)t}$, so that at time $t_0$,
\begin{align}
\nonumber \left|a(t_0,0)-a(t_0,z(t_0))\right| & \geq e^{-C(\gamma)} e^{(2\mu-\gamma)t_0} |z_0|^{2-\frac{\epsilon}{2}} \\
\nonumber & = e^{-C(\gamma)} \left(\frac{\kappa_0}{|z_0|} \right)^{(\frac{1}{\mu}-\delta)(2\mu-\gamma)} |z_0|^{2-\frac{\epsilon}{2}} \\
\label{id:instability:main-computation}& \geq e^{-C(\gamma)} \kappa_0^{(\frac{1}{\mu}-\delta)(2\mu-\gamma)}  |z_0|^{-\frac{\epsilon}{4}}
\end{align}
provided $\kappa$, $\delta$ and $\gamma$ have been chosen small enough depending on $\epsilon$.  We now fix all the constants and consider the above inequality as $z_0\to 0$. Since $t_0(z_0)\to \infty$ as $z_0\to 0$, the above implies that $\| a(t,\cdot)\|_{L^\infty}\to \infty$ as $t\to \infty$. But this contradicts \eqref{contradiciton-hp}, hence the result of the Theorem.
\end{proof}

%%%%%%%%%%%%%%%%%%%%%%%%%%%%%%%%%%%%%%%%%%%%

\section{Proof of the stability and instability results for (IPM)} \label{sec:IPM-results}

This section is devoted to the proof of Theorems \ref{Th-IPM} and \ref{Th-main-4}. In order to prove the stability Theorem \ref{Th-IPM}, we will rely on the result of Theorem \ref{Th-main} concerning solutions of the reduced equation from the inviscid Primitive Equations proved in the previous sections. To prove the instability Theorem \ref{Th-main-4}, we will perform a minor adaptation of the proof of the corresponding instability Theorem \ref{Th-main-2} for the Primitive Equations.

\begin{proof}[Proof of Theorem \ref{Th-IPM}]

Assume $b_0$ is mean free and satisfies \eqref{Th-ic-IPM}. For $b$ the associated $C^3$ solution to \eqref{eq-b}, we perform the change of variables \eqref{def-a}, and get that $a$ is a solution of \eqref{eq-main}. We choose $\nu_0=\mu$, and thus the initial datum satisfies
$$
 \|a_0(\cdot) - \cos ( \cdot)\|_{C^3([-\pi,\pi])} \leq \delta.
$$
We apply Theorem \ref{Th-main}, and obtain that there exists $\mu_*=1+O(\delta)$ such that
\begin{equation} \label{asymptotic-a}
a(t,x)=\mu_* \cos(x)+O_{C^3}(\delta e^{- t/2}).
\end{equation}
We now unwind the change of variables. We abuse notations and write $\nu(t)=\nu(\tau (t))$ for the function $\nu$ expressed as a function of $t$. Using $\frac{dt}{d\tau}=\nu$, then \eqref{def-a}, and then changing variables $\bar \tau\mapsto t'$ in the integral, we have
\begin{align*}
\frac{d}{dt}\nu &= \frac{1}{\nu}  \frac{d}{d\tau}\nu = \frac{1}{\nu}\left(\frac{\nu}{\pi}\int_0^{\tau} \int_{-\pi}^{\pi}b^2(s,  x)\,dx\,d\bar\tau\right) \\
&=  \frac{1}{\pi}\int_0^{\tau} \int_{-\pi}^{\pi} \nu^2(t'(\bar \tau))a^2(t'(\bar \tau),  x)\,dx\,d\bar\tau \\
& = \frac{1}{\pi}\int_0^{t} \nu(t') \int_{-\pi}^{\pi}a^2(s,  x)\,dx dt' .
\end{align*}
This implies that $\nu$ solves the linear equation
$$
\partial_t^2 \nu = \Big(  \frac{1}{\pi} \int_{-\pi}^{\pi}a^2(s,  x)\,dx \Big)\nu = (\mu_{*}^2+O(\delta e^{-t/2}))\nu
$$
with initial datum $\nu(0)=\mu$ and $\partial_t \nu(0)=0$. We claim that
\begin{equation} \label{asymptotic-nu}
\nu(t)= \nu^* e^{\mu_* t}+\frac{\mu}{2} e^{-\mu_* t}+O(\mu \delta e^{(\mu_*-\frac 12)t}) = \nu^* e^{\mu_* t} (1+O(e^{-t/2}))
\end{equation}
for a constant $\nu^*=\frac{\mu}{2}(1+O(\delta))$, and postpone the proof of the claim to the end of this proof. Then using $\frac{d\tau}{dt}=\frac{1}{\nu}$ we have
$$
\tau = \frac{1}{\nu^*} \int_0^t \left( e^{-\mu_* t}+O(e^{-\mu_*-\frac 12 t})\right) dt .
$$
We deduce that there exists $\tau^*$ such that $\lim_{t\to \infty}\tau(t)=\tau^*$, and that $\tau^*-\tau= \frac{e^{-\mu_* t}}{\mu_*\nu^*}(1+O(e^{-t/2}))$ which gives
$$
e^{-\mu_* t}=\mu_*\nu^* (\tau^*-\tau)\left( 1+ O\left(|\mu (\tau^*-\tau)|^{\frac{1}{2\mu_*}} \right) \right).
$$
In turn, injecting the above in \eqref{asymptotic-a} and \eqref{asymptotic-nu} shows
$$
a=\mu_* \cos(x)+O_{C^3}\left(|\mu (\tau^*-\tau)|^{\frac{1}{2\mu_*}} \right)  ,  \qquad \nu =\frac{1}{\mu_* (\tau^*-\tau)}\left( 1+ O\left(|\mu (\tau^*-\tau)|^{\frac{1}{2\mu_*}} \right) \right).
$$
Using $b=\nu a$ and $\frac{1}{2\mu_*}\geq \frac 14$ since $\mu_*=1+O(\delta)$ this shows
$$
b(\tau,x)= \frac{\cos x}{\tau^*-\tau}+O_{C^3}\left( (\tau^{*}-\tau)^{-\frac 34} \right)
$$
Thus $b$ blows up in finite time $\tau^*$ with the desired asymptotics of Theorem \ref{Th-IPM}, ending the proof of the Theorem.

There remains to prove the claim \eqref{asymptotic-nu}. Let $\vec \nu=(\nu,\partial_t \nu)$, and $\vec w=e^{-\mu_* t}\vec \nu$. Then $\vec w$ solves
$$
\partial_t \vec w = \begin{pmatrix} -\mu_* & 1 \\ \mu_{*}^2 & -\mu_* \end{pmatrix} \vec w + O(\delta e^{-t/2}|\vec w|).
$$
We decompose $\vec w(t) = u_1(t) (1,\mu_*)+u_2(t)(1,-\mu_*)$. Then $(u_1,u_2)$ solves the system
$$
\left\{
\begin{array}{l l}
\partial_t u_1 = O(\delta e^{-t/2}(|u_1|+|u_2|), \\
\partial_t u_2 =-2\mu_* u_2+ O(\delta e^{-t/2}(|u_1|+|u_2|),
\end{array}
\right.
$$
with initial datum $(u_{1}(0),u_2(0))=(\frac{\mu}{2},\frac{\mu}{2})$. One can easily see from the above equation that there exists $u_{1}^\infty =\frac{\mu}{2}(1+O(\delta))$ such that
$$
u_1(t)= u_1^\infty +O(\mu \delta e^{-t/2}) \quad \mbox{and} \quad u_2(t)=\frac{\mu}{2} e^{-2\mu_* t}+O(\delta \mu e^{-t/2}).
$$
Using $\nu = e^{\mu_* t} (u_1+u_2)$ proves the desired claim \eqref{asymptotic-nu}, which ends the proof of Theorem \ref{Th-IPM}.
\end{proof}

\bigbreak
\bigbreak

\begin{proof}[Proof of Theorem \ref{Th-main-4}]

We pick $b_0=\mu \,a_0$ with $a_0$ the exact same initial datum used in Section \ref{sec:instability} to prove the instability Theorem \ref{Th-main-2} for the Primitive Equations. We suppose by contradiction that the result of Theorem \ref{Th-main-4} does not hold true, i.e. that the corresponding solution $b$ to \eqref{eq-main} blows up in finite time $\tau^*>0$ and satisfies \eqref{TH4-behavior}-\eqref{TH4-behavior-2}.

We change variables and define
$$
a(t,x)= \frac{\tau^*-\tau}{\mu \tau^*} b(\tau,x), \qquad t=\mu\tau^* \ln \left(\frac{\tau^*}{\tau^*-\tau}\right).
$$
Then $a$ solves
\begin{equation} \label{eq:modified-IPE-instability}
\partial_t a+  \left(\int_{-\pi}^x a(t, \bar x) \,d\bar x\right) \partial_x a -a^2 +\frac{1}{\pi}  \int_{-\pi}^{\pi}a^
2\,dx= \theta(t)a
\end{equation}
with initial datum $a(0,x)=a_0(x)$, where
$$
\theta(t)= -\frac{1}{\mu\tau^*}+\frac{\tau^*-\tau}{\mu\tau^*}\int_0^\tau \frac{1}{\pi} \int_{-\pi}^\pi b^2(\bar \tau,x)dxd\bar \tau \to 0
$$
as $t\to \infty$, because $ \frac{1}{\pi} \int_{-\pi}^\pi b^2(\bar \tau,x)dx = \frac{1+o_{\bar\tau\to \tau^*}(1)}{(\tau^*-\bar\tau)^2}$ from \eqref{TH4-behavior}-\eqref{TH4-behavior-2}. Moreover we have since $b$ satisfies \eqref{TH4-behavior}-\eqref{TH4-behavior-2} that $a$ satisfies $\|a(t, \cdot)-\bar\mu \cos(\cdot)\|_{L^\infty(\Omega)}\to 0$ as $t\to \infty$ for $\bar\mu=\frac{1}{\mu\tau^*}$.

The proof of Theorem \ref{Th-main-2} performed in Section \ref{sec:instability} adapts to the present case \eqref{eq:modified-IPE-instability} to show a contradiction, with only a minor modification that we now explain. We shall keep the notations of the proof of Theorem \ref{Th-main-2}. First, since the characteristics of Equations \eqref{eq-main} and \eqref{eq:modified-IPE-instability} are exactly the same, the result of Lemma \ref{lem:technical-instability} holds true in the present case. Then, we study the difference $a(t,0)-a(t,z(t))$, which satisfies by \eqref{eq:modified-IPE-instability}
$$
\frac{d}{dt}(a(t,0)-a(t,z(t)))= (a(t,0)+a(t,z(t))+\theta(t))(a(t,0)-a(t,z(t))
$$
so that $a(t,0)-a(t,z(t))=e^{\int_0^t (a(t',0)+a(t',z(t'))+\theta(t'))dt'}(a(0,0)-a(0,z_0)$. Since $\theta(t) \to 0$ as $t\to \infty$, we can argue as in the proof of Theorem \ref{Th-main-2} and obtain that for any $\gamma>0$, there exists $C(\gamma)>0$ such that for $z_0$ small enough and $0\leq t\leq t_0$ we have
$$
\int_0^t \left(a(s,0)+a(s,z(s))+\theta(s)\right)ds\geq (2\bar\mu-\gamma)t  -C(\gamma).
$$
Hence we obtain the same conclusion \eqref{id:instability:main-computation} in the present case too, namely 

$$
 \left|a(t_0,0)-a(t_0,z(t_0))\right|  \geq e^{-C(\gamma)} \kappa_0^{(\frac{1}{\bar\mu}-\delta)(2\bar\mu-\gamma)}  |z_0|^{-\frac{\epsilon}{4}}.
$$
Since $t_0(z_0)\to \infty$ as $z_0\to 0$, the above implies that $\| a(t,\cdot)\|_{L^\infty}\to \infty$ as $t\to \infty$. But this contradicts $\|a(t, \cdot)-\bar\mu \cos(\cdot)\|_{L^\infty(\Omega)}\to 0$, hence the result of the Theorem.
\end{proof}

%%%%%%%%%%%%%%%%%%%%%%%%%%%%%%%%%%%%%%%%%%%%
 
\section*{Acknowledgement}

This result is part of the ERC starting grant project FloWAS that has received funding from the European Research Council (ERC) under the Horizon Europe research and innovation program (Grant agreement No. 101117820). C. Collot was supported by the CY Initiative of Excellence Grant   ``Investissements d'Avenir"   ANR-16-IDEX-0008 via Labex MME-DII and the grant ``Chaire Professeur Junior" ANR-22-CPJ2-0018-01 of the French National Research Agency. C. Collot and C. Prange were partially supported by the grant BOURGEONS ANR-23-CE40-0014-01 of the French National Research Agency.  C. Prange is also partially supported by project CRISIS grant ANR-20-CE40-0020-01 of the French National Research Agency and by the CY Initiative of Excellence project CYNA (CY Nonlinear Analysis). J. Tan was  supported by the CY Initiative of Excellence,  project CYNA (CY Nonlinear Analysis),  the Direct Grant (No.  4053715) of CUHK  and   the Hong Kong RGC grant CUHK14302525.  

\bigbreak

{\bf Data Availibility.}
Data sharing is not applicable to this article as no data sets were generated or analyzed during the current study.

\bigbreak

\end{document}